\documentclass[a4paper,11pt]{article}
\usepackage[english]{babel}
\usepackage[utf8]{inputenc}
\usepackage{amsmath,amsthm,amssymb}
\usepackage{enumerate}
\usepackage{hyperref}
\usepackage{xcolor,graphicx}

\numberwithin{equation}{section}
\usepackage{tikz}
\usepackage[margin=1.2in]{geometry}

  \newcommand{\CC}{{\mathcal C}}
  \newcommand{\DD}{{\mathcal D}}

  \newcommand{\sfT}{{\sf T}}
  \newcommand{\bDelta}{{\mathbf \Delta}}
  \newcommand{\tDelta}{{\widetilde \Delta}}
  
  \newcommand{\mO}{{\mathcal O}}
  
  \newcommand{\R}{\mathbb{R}} 
  \newcommand{\C}{\mathbb{C}} 
  \newcommand{\N}{\mathbb{N}} 
  \newcommand{\Z}{\mathbb{Z}} 
  




  \DeclareMathOperator{\Tr}{\mathsf{Tr}}
 
\newcommand{\beq}{\begin{equation}}
\newcommand{\eeq}{\end{equation}}
\newcommand{\beqn}{\begin{equation*}}
\newcommand{\eeqn}{\end{equation*}}
\newcommand{\beqr}{\begin{eqnarray}}
\newcommand{\eeqr}{\end{eqnarray}}
\newcommand{\beqrn}{\begin{eqnarray*}}
\newcommand{\eeqrn}{\end{eqnarray*}}
\newcommand{\bmline}{\begin{multline}}
\newcommand{\emline}{\end{multline}}
\newcommand{\bmlinen}{\begin{multline*}}
\newcommand{\emlinen}{\end{multline*}}


\newtheorem{defin}{Definition}[section]
\newtheorem{definition}[defin]{Definition}
\newtheorem{proposition}[defin]{Proposition}
\newtheorem{theorem}[defin]{Theorem}
\newtheorem{rem}[defin]{Remark}
\newtheorem{corollary}[defin]{Corollary}
\newtheorem{lemma}[defin]{Lemma}


\newcommand{\cgpsd}{\mathcal{CS}_{+}}
\newcommand{\cpsd}{\mathcal{S}_{+}}

\newcommand{\cp}{\mathcal{CP}}


\newcommand{\ignore}[1]{}

\newcommand{\SSS}{{\mathcal S}}

\newcommand{\cop}{{{\mathcal {COP}}}}

\newcommand{\cl}{\text{\rm cl}}

\newcommand{\uX}{\underline{X}}

\newcommand{\oDelta}{\underline \bDelta}
\newcommand{\tchiq}{\widetilde\chi_q}
\usepackage{authblk}

\newcommand{\csvn}{\mathcal{CS}_{\rm{vN}+}}
\newcommand{\csu}{\mathcal{CS}_{\cU+}}
\newcommand{\cF}{\mathcal F}
\newcommand{\pow}{\mathcal P}
\newcommand{\cU}{\mathcal U}
\newcommand{\cR}{\mathcal R}
 
\newcommand{\ep}{\varepsilon}
 \DeclareMathOperator{\nt}{\mathsf{tr}}
  
\newcommand{\cM}{\mathcal M}
\newcommand{\cH}{\mathcal H}
\newcommand{\cN}{\mathcal N}
\newcommand{\cB}{\mathcal B}


\title{On the closure of the completely positive semidefinite cone and linear approximations to quantum colorings}

\author[1]{Sabine Burgdorf\thanks{burgdorf@cwi.nl}}
\author[1,2]{Monique Laurent\thanks{monique@cwi.nl}}
\author[1]{Teresa Piovesan\thanks{piovesan@cwi.nl}}

\affil[1]{Centrum Wiskunde \& Informatica (CWI)\\ Amsterdam, The Netherlands}
\affil[2]{Tilburg University \\ Tilburg, The Netherlands}



\begin{document}
\maketitle

\begin{abstract}
We  investigate structural properties  of the completely positive semidefinite cone $\cgpsd^{n}$, consisting of all the $n \times n$ symmetric matrices that admit a Gram representation by positive semidefinite matrices of any size.
This cone has been  introduced to  model quantum graph parameters as conic optimization problems. Recently it has also been used to characterize the set 
$\mathcal Q$ of bipartite quantum correlations, as projection of an affine section of it.
We have two main results concerning  the structure of the completely positive semidefinite cone, namely about  its interior and  about its closure.
 On the one hand  we construct a hierarchy of polyhedral cones which covers the interior of $\cgpsd^n$, which we use for computing some variants of the quantum chromatic number by way of a linear program.
 On the other hand we  give an explicit description of the closure of the completely positive semidefinite cone, by showing that it consists of all matrices admitting a Gram representation in the tracial ultraproduct  of matrix algebras. 
\end{abstract}

\section{Introduction}\label{sec:intro}

\subsection*{General background}
Entanglement, one of the most peculiar features of quantum mechanics,  allows different parties to be correlated in a non-classical way. 
Properties of entanglement can be studied through the set of bipartite quantum correlations, commonly denoted as $\mathcal Q$, consisting of  the conditional probabilities that two physically separated parties can generate by
performing measurements on a shared entangled state. 
More formally, a conditional probability distribution $(P(a,b | x,y))_{a\in A,b\in B,x\in X,y\in Y}$ is called
\emph{quantum} if $P(a,b | x,y) = \psi^{\dagger} E_{x}^{a} \otimes F_{y}^{b} \psi$ for some unit vector $\psi$ in a finite dimensional Hilbert space $\mathcal{H}$ and some sets of positive semidefinite matrices (aka measurement operators)  $\{E_{x}^{a} : a \in A\}$ and $\{ F_{y}^{b} : b \in B\}$ satisfying $\sum_{a\in A} E_{x}^{a} = I$ and $\sum_{b\in B} F_{y}^{b}=I$ for all $x \in {X}, y \in Y$.   
Clearly, we can equivalently assume that the unit vector $\psi$ is real valued and that $E_{x}^{a}, F_{y}^{b}$ are real valued positive symmetric operators. We will assume this throughout the paper.
Here we consider the case of two parties (aka the bipartite setting) and the sets $X,Y$ (resp., $A,B$) model the possible inputs (resp., outputs) of the two parties, assumed throughout to be finite.
While the set of classical correlations (those obtained using only local and shared randomness) forms a polytope so that membership can be decided using linear programming, 
 the set $\mathcal Q$ of quantum correlations is convex but with infinitely many 
 extreme points and its structure is much harder to characterize. 
An open question in quantum information is whether allowing an infinite amount of entanglement, i.e., allowing the Hilbert space $\mathcal{H}$ in the above definition to be infinite dimensional, gives rise to a probability distribution $P$ which is not quantum \cite{Wehner:2008}. In other words, it is not known whether  the set of quantum correlations $\mathcal{Q}$ is closed.
 
\smallskip
A setting which is frequently used to study the power of quantum correlations is the one of \emph{nonlocal games}. In a nonlocal game a referee gives to each of the two cooperating players a question 
and, without communication throughout the game, they have to answer. According to some known predicate, which depends on the two questions and on the two answers, the referee determines whether the players have won or lost the game. In a quantum strategy the players can use quantum correlations to answer. 
The \emph{quantum coloring game} is a particular nonlocal game that  has received a substantial amount of attention lately \cite{Avis:2006,Cameron:2007,Roberson:2012,Paulsen:2013,Ji:2013,LP13,Paulsen:2014}.
Here, each of the two players receives a vertex of a fixed graph $G$. They win if they output the same color upon receiving the same vertex or if they output different colors on pairs of adjacent vertices.
The \emph{quantum chromatic number} $\chi_{q}(G)$ is the minimum number of colors that the players must use as output set in order to win the coloring game on all input pairs with a quantum strategy.
It is not hard to see that if the players are restricted to classical strategies then
the minimum number of colors they need to win the game on all input pairs is exactly the classical chromatic number $\chi(G)$.

\smallskip
Like its classical analog the quantum chromatic number is an NP-hard graph parameter~\cite{Ji:2013}. Moreover, it is also lower bounded by the  theta number \cite{Roberson:2012}, 
which can be efficiently computed with semidefinite programming. However,  it appears to be hard to find non-trivial improved upper and lower bounds to $\chi_q(G)$. 
With the intention of better understanding $\chi_{q}(G)$ and other related quantum graph parameters, two of the authors have introduced the 
{\em completely positive semidefinite cone}  $\cgpsd^{n}$~\cite{LP13}.

Throughout $\mathcal S^n$ is the set of  real symmetric $n\times n$ matrices and $\mathcal S^n_+$ the subset of positive semidefinite matrices; 
$\langle X,Y \rangle =\Tr(XY)$ is the trace inner product and $\Tr(X)=\sum_{i=1}^n X_{ii}$ for $X,Y\in\mathcal S^n$. 
Then, $\cgpsd^n$ consists  of all matrices $A$ that admit a Gram representation by positive semidefinite matrices, i.e., such that $A=(\langle X_i,X_j\rangle)_{i,j=1}^n$ for some matrices $X_1,\ldots,X_n\in \cpsd^d$ and $d\ge 1$. 
(When we do not want to specify the size of the matrices in $\cgpsd^n$ we omit the superscript and write $\cgpsd$.)
Using an equivalent formulation of the quantum chromatic number proven in~\cite{Cameron:2007}, it is shown in \cite{LP13} that the parameter $\chi_q(G)$ can be rewritten 
as a feasibility program over the completely positive semidefinite cone:
\beq \label{def:chiq}
\chi_{q}(G) = \min t \in \mathbb{N} \text{ s.t. } \exists A \in \cgpsd^{nt}, \, A \in \mathcal{A}^{t}  \text{ and }  L_{G,t}(A) = 0.
\eeq
Here, $n$ is fixed and equal to the number of vertices of the graph $G$ while $t$ is the variable that triggers the size of the matrix variable  $A$ in the above program.
Indeed,  $A$ is indexed by $V(G)\times [t]$.
With $\mathcal{A}^t$ we represent the affine space in $\mathcal S^{nt}$ defined by the equations 
\begin{equation}\label{def:At}
\sum_{i,j \in [t]} A_{ui,vj} = 1 \text{ for } u,v \in V(G),
\end{equation}
and with $L_{G,t}: \SSS^{nt} \to \mathbb{R}$ we denote the linear map defined by
\begin{equation}\label{def:Lt}
L_{G,t}(A) = \sum_{u\in V(G),i \neq j\in [t]} A_{ui,uj} + \sum_{uv \in E(G),i\in [t]} A_{ui,vi}.
\end{equation}

Notice that any matrix in $\cgpsd$ is positive semidefinite. Moreover it has nonnegative entries because the inner product of two positive semidefinite matrices is nonnegative. Hence the condition 
$L_{G,t}(A) = 0$ is equivalent to requiring  that all the terms  in the sum in (\ref{def:Lt}) are equal to zero.
The constraint $A \in \mathcal{A}^{t}$ models that the players are using a conditional probability distribution for their strategy, while $L_{G,t}(A) = 0$ imposes that they have a winning strategy for the coloring game. 
The structure of the matrix cone $\cgpsd$ is still largely unknown. In particular it is
not known whether the cone $\cgpsd$ is a closed set. 

By replacing in (\ref{def:chiq}) the cone $\cgpsd$ by its closure $\cl(\cgpsd)$, we get another graph parameter, denoted as $\widetilde \chi_q(G)$.
Namely,
\beq  \label{def:tchiq}
\tchiq(G) = \min t \in \mathbb{N} \text{ s.t. } \exists A \in \cl(\cgpsd^{nt}), \, A \in      \mathcal{A}^{t}  \text{ and }  L_{G,t}(A) = 0.
\eeq
Clearly, $\widetilde \chi_q(G)\le \chi_q(G)$, with equality if $\cgpsd$ is closed. This parameter, which was introduced in \cite{LP13}, will be studied in this paper.

\smallskip
Interestingly, Man\v{c}inska and Roberson~\cite{MR:2014} showed recently 
that the set $\mathcal Q$ of quantum bipartite correlations can also be described in terms of the completely positive semidefinite cone. 
They show that $\mathcal Q$ 
can be obtained as  the projection of an affine section of the completely positive semidefinite cone.

\begin{theorem} \cite{MR:2014}\label{thm:corr}
A conditional probability distribution $P = (P(a,b|x,y))$ with input sets $X,Y$ and output sets $A,B$ is quantum (i.e., $P \in \mathcal Q$) if and only if there exists a matrix $R \in \cgpsd$ indexed by $(X \times A) \cup (Y\times B)$ satisfying the conditions:
\begin{align}
& \;\sum_{a,a' \in A } R_{xa,x'a'} = 1 \text{ for all } x,x'\in X,
\label{affine1} \\
& \,\,\sum_{b,b'\in B } R_{yb,y'b'} = 1 \text{ for all } y,y'\in Y,
\label{affine1b}\\
& \sum_{a\in A,b\in B}R_{xa,yb}=1 \text{ for all } x\in X, y\in Y,
\label{affine1c}\\
& \,R_{xa,yb}=P(a,b|x,y) \text{ for all } a \in A, b\in B, x \in X, y \in Y. \label{project}
\end{align}
In other words, 
\beq\mathcal Q= \pi(\cgpsd^{N}\cap \mathcal{B}^t),\eeq
where $N = |(X \times A) \cup (Y\times B)|$,  $\mathcal{B}^t$ is the affine space defined by the constraints (\ref{affine1}), (\ref{affine1b}) and 
(\ref{affine1c}),  
and $\pi$ is the projection onto the subspace indexed by $(X\times A) \times  (Y\times B)$ (defined by (\ref{project})). 
\end{theorem}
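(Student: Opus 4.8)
The plan is to prove both directions of the characterization $\mathcal Q= \pi(\cgpsd^{N}\cap \mathcal{B}^t)$ by translating between quantum strategies and Gram representations by positive semidefinite matrices. First I would establish the easy inclusion $\mathcal Q \subseteq \pi(\cgpsd^{N}\cap \mathcal{B}^t)$. Given a quantum distribution $P(a,b|x,y) = \psi^{\dagger} E_{x}^{a} \otimes F_{y}^{b} \psi$ with $\psi$ a unit vector in $\HS = \R^{d_1}\otimes\R^{d_2}$ and measurement operators summing to identity, I would define, for each $(x,a)$, the positive semidefinite matrix $X_{xa} := (E_x^a\otimes I)\,\psi\psi^{\dagger}\,(E_x^a\otimes I)$ acting on $\HS$, and similarly $Y_{yb} := (I\otimes F_y^b)\,\psi\psi^{\dagger}\,(I\otimes F_y^b)$; more precisely one wants the square roots so that $\langle X_{xa}, Y_{yb}\rangle$ comes out to the right thing, so I would instead set $X_{xa} = \sqrt{E_x^a\otimes I}\,\rho\,\sqrt{E_x^a\otimes I}$ with $\rho=\psi\psi^\dagger$ and check via cyclicity of the trace that $\langle X_{xa},Y_{yb}\rangle = \Tr(\rho (E_x^a\otimes I)(I\otimes F_y^b)) = \psi^\dagger E_x^a\otimes F_y^b\psi = P(a,b|x,y)$. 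Then the Gram matrix $R$ of the family $\{X_{xa}\}\cup\{Y_{yb}\}$ lies in $\cgpsd^N$, and the completeness relations $\sum_a E_x^a = I$, $\sum_b F_y^b = I$ together with $\Tr(\rho)=1$ directly yield the affine constraints (\ref{affine1}), (\ref{affine1b}), (\ref{affine1c}); constraint (\ref{project}) holds by construction, so $P = \pi(R)$.

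Next I would prove the reverse inclusion $\pi(\cgpsd^{N}\cap \mathcal{B}^t) \subseteq \mathcal Q$, which is the substantive direction. Start from $R\in\cgpsd^N$ satisfying (\ref{affine1})--(\ref{project}), with a Gram representation $R_{st} = \langle Z_s, Z_t\rangle$ by matrices $Z_s\in\cpsd^d$ indexed by $s\in (X\times A)\cup(Y\times B)$. Write $Z_s = W_s^\dagger W_s$ (or use $Z_s^{1/2}$) and think of the $Z_s$ as living in the Hilbert space $\cpsd^d$ with the trace inner product. The key algebraic observation is that (\ref{affine1}) forces, for each fixed $x$, the matrix $\sum_a Z_{xa}$ to have the same trace-inner-product with $\sum_{a'} Z_{x'a'}$ for all $x,x'$, and combined with positive semidefiniteness this should pin down $P_x := \sum_a Z_{xa}$ to be (a scalar multiple of) a fixed rank-one projector, independent of $x$ — this is the standard rigidity argument: if $u_x := \sum_a Z_{xa}$ satisfies $\langle u_x,u_{x'}\rangle = 1$ for all $x,x'$ and each $\langle u_x, u_x\rangle = 1$, then all $u_x$ coincide with a common unit vector $\psi$ in the Gram space. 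I would then set $\HS$ to be this ambient matrix space, take $\rho$ to be built from $\psi$, and define measurement operators from the $Z_{xa}$; the cross constraint (\ref{affine1c}) and a commutation/tensor argument (à la the GNS-type construction) produce the tensor product structure $E_x^a \otimes F_y^b$ needed, using that operators coming from the $X$-side and the $Y$-side act on "independent" parts.

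The main obstacle is exactly this last point: extracting a genuine bipartite tensor structure $\HS = \HS_A\otimes\HS_B$ with commuting families of $X$-measurements and $Y$-measurements from what is a priori just a flat Gram representation. In the commuting-operator framework one would only get operators acting on a common space with $[E_x^a, F_y^b]=0$; to land in the tensor-product model $\mathcal Q$ (finite dimensional, genuine tensor product) one needs finite dimensionality — which we have, since the Gram representation uses matrices of some finite size $d$ — plus the structure theorem that a finite-dimensional $*$-algebra generated by two commuting families is a direct sum of tensor products, allowing one to block-diagonalize and, after absorbing the blocks into the shared state, realize everything as a single honest tensor product $\R^{d}\otimes\R^{d}$. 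I would organize this by: (i) letting $\mathcal M_A$ be the algebra generated by the $X$-side operators and $\mathcal M_B$ its commutant restricted appropriately, (ii) invoking the classification of finite-dimensional von Neumann algebras to write the relevant space as $\bigoplus_i \HS_{A,i}\otimes\HS_{B,i}$, and (iii) checking that the probabilities are unchanged when one passes to this decomposition and reassembles a global tensor product, using that $\psi$ being a unit vector and the completeness relations are preserved. The bookkeeping that the defining equation (\ref{affine1c}) is what guarantees the $X$-operators and $Y$-operators, once realized concretely, really do commute (because $R_{xa,yb}$ must factor as an inner product that "sees" both sides symmetrically) is the delicate part and is where I expect the real work to lie.
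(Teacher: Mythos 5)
The paper does not prove this statement; it is cited directly to Man\v{c}inska--Roberson~\cite{MR:2014}. So there is no in-paper proof to compare against, and I will assess your proposal on its own terms. Both directions of your argument have real problems.

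In the easy direction your construction does not reproduce $P$. With $\rho=\psi\psi^\top$, each $X_{xa}=\sqrt{E_x^a\otimes I}\,\rho\,\sqrt{E_x^a\otimes I}$ is rank one, $X_{xa}=uu^\top$ with $u=\sqrt{E_x^a\otimes I}\,\psi$, and likewise $Y_{yb}=vv^\top$ with $v=\sqrt{I\otimes F_y^b}\,\psi$. Hence
\begin{equation*}
\Tr(X_{xa}Y_{yb})=(u^\top v)^2=\bigl(\psi^\top(\textstyle\sqrt{E_x^a}\otimes\sqrt{F_y^b})\psi\bigr)^2,
\end{equation*}
which is not $\psi^\top(E_x^a\otimes F_y^b)\psi$; cyclicity of the trace does not let you collapse the two copies of $\rho$ into one. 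The standard construction works in the \emph{local} space rather than on $\mathcal H$: Schmidt-decompose $\psi=\sum_i\lambda_i\,e_i\otimes e_i$, set $K:=\mathrm{diag}(\sqrt{\lambda_i})$ (the fourth root of the reduced state), and take $X_{xa}:=KE_x^aK$ and $Y_{yb}:=KF_y^bK$ as $d\times d$ PSD matrices. Then $\Tr(X_{xa}Y_{yb})=\sum_{i,j}\lambda_i\lambda_j(E_x^a)_{ij}(F_y^b)_{ij}=P(a,b|x,y)$, while $\sum_a X_{xa}=K^2$ and $\Tr(K^2K^2)=\sum_i\lambda_i^2=1$ give (\ref{affine1})--(\ref{affine1c}).

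In the reverse direction your rigidity step is correct (Cauchy--Schwarz applied to (\ref{affine1})--(\ref{affine1c}) shows $Z_0:=\sum_a Z_{xa}=\sum_b Z_{yb}$ is a single PSD matrix with $\Tr(Z_0^2)=1$), but from there you veer into machinery that is neither needed nor, as you present it, justified: you never actually produce operators acting on a Hilbert space, so there is nothing for the commutation/structure-theorem argument to bite on, and you acknowledge you cannot see how (\ref{affine1c}) would force commutation. The missing idea is much more concrete: a Gram representation by $d\times d$ matrices already lives in a space carrying a tensor structure, namely $\R^{d\times d}\cong\R^d\otimes\R^d$ via vectorization. Set $\psi:=\mathrm{vec}(Z_0)$, a unit vector since $\|\psi\|^2=\Tr(Z_0^2)=1$. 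Since $Z_{xa}\preceq Z_0$ and $Z_{yb}\preceq Z_0$, their ranges lie in $\mathrm{range}(Z_0)$, so $E_x^a:=Z_0^{-1/2}Z_{xa}Z_0^{-1/2}$ and $F_y^b:=Z_0^{-1/2}Z_{yb}Z_0^{-1/2}$ (pseudo-inverses) are well-defined PSD operators on $\mathrm{range}(Z_0)$ that sum to the identity there. The identity $\mathrm{vec}(Z_0)^\top(A\otimes B)\,\mathrm{vec}(Z_0)=\Tr(Z_0\,B\,Z_0\,A^\top)$ then gives
\begin{equation*}
\psi^\top(E_x^a\otimes F_y^b)\psi=\Tr\bigl(Z_0\,Z_0^{-1/2}Z_{yb}Z_0^{-1/2}\,Z_0\,Z_0^{-1/2}Z_{xa}Z_0^{-1/2}\bigr)=\Tr(Z_{yb}Z_{xa})=R_{xa,yb},
\end{equation*}
which is $P(a,b|x,y)$ by (\ref{project}). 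No GNS construction, no commutants, no classification of finite-dimensional von Neumann algebras: the bipartite tensor structure is built into the Gram picture, and the ``delicate commutation'' you anticipate never arises.
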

Notice that any feasible matrix $R$ to the above program has the form $\left(\begin{smallmatrix} R_{1} & P \\ P^{T} & R_{2} \end{smallmatrix}\right)$, where $R_{1}$ is indexed by $X \times A$, $R_{2}$ is indexed by $Y \times B$ and each entry of $P$ is such that  $P_{xa,yb} = P(a,b|x,y)$.

As shown  in~\cite{MR:2014},
if the completely positive semidefinite cone is closed then the set $\mathcal Q$ of quantum bipartite correlations too is closed.
Indeed,  the constraints (\ref{affine1})-(\ref{affine1c}) imply that the set $\cgpsd\cap \mathcal B^t$ is bounded. Hence, if $\cgpsd $ is closed then  
 $\cgpsd\cap \mathcal B^t$ is compact and thus its
projection $\mathcal Q=\pi(\cgpsd\cap \mathcal B^t)$ is compact.

\subsection*{Our contributions}
The results of this paper are twofold. First we construct a hierarchy of polyhedral cones that asymptotically covers the interior of the completely positive semidefinite cone 
$\cgpsd$. Moreover we show how this hierarchy can be used to study the quantum chromatic number. In particular we build a hierarchy of linear programs, among which one of them permits to compute the variant $\widetilde \chi_q(G)$ in (\ref{def:tchiq}) 
of the parameter $\chi_q(G)$.
This idea can also be applied to compute variants of other versions of the quantum chromatic number; we will indicate how to do that for the variant $\widetilde \chi_{qa}(G)$ of the  parameter $\chi_{qa}(G)$ considered in \cite{Paulsen:2014}. 
See below for some details and Sections~\ref{sec:poly} and~\ref{sec:lp-bounds} for the proofs.

As a second main contribution we provide an explicit description of the
closure of the cone $\cgpsd$, in terms of tracial ultraproducts of
matrix algebras. Moreover we exhibit a larger cone, containing
$\cgpsd$,  which can be interpreted as an infinite dimensional analog of
$\cgpsd$. This cone consists of the matrices
which admit a Gram representation by (a specific class of) positive
semidefinite operators on a possibly infinite dimensional Hilbert space
instead of Gram representations by {\em finite} positive semidefinite
matrices.
We can in fact show that this larger cone is indeed a closed cone and that
it is equal to $\cl(\cgpsd)$ if Connes' embedding conjecture holds true. Since
the description of these cones involve quite some notation and concepts
from operator theory we skip a preliminary description of the used
methods and refer directly to Section \ref{sec:closure} which can be
read independently of the other part.

In summary, our results give structural information about the completely positive semidefinite cone $\cgpsd$ which come in two flavors, depending whether we consider its interior or its boundary.

\medskip
We now give some more details about our first contribution.
In a nutshell, the idea for building  the hierarchy of polyhedral cones is to discretize the set of positive semidefinite matrices by rational ones with bounded entries.
Namely, given an integer $r\ge 1$, we define the cone $\CC^n_r$ as the conic hull of all matrices $A$ that admit a Gram representation by $r\times r$ positive semidefinite matrices $X_1,\ldots,X_n$ whose entries are rational with denominator at most $r$ and satisfy $\sum_{i=1}^n\Tr(X_i)=1$.
We show that the cones $\CC^n_r$ and their dual cones $\DD_r^{n} = \CC_{r}^{n*}$ satisfy the following properties:
\beqn\text{\rm int}(\cgpsd^n)\subseteq \bigcup_{r\ge 1}\CC^n_r \subseteq \cgpsd^n \ \text{ and } \
\cgpsd^{n*}= \bigcap_{r\ge 1} \DD_r^{n}. \eeqn
Moreover, for any fixed $r$,  linear optimization over the cone $\CC^n_r$ can be performed in polynomial time in terms of $n$.
This discretization idea was also used in the classical (scalar) setting, where a hierarchy of polyhedral cones is constructed to approximate the completely      positive cone 
(consisting of all matrices that admit a Gram representation by nonnegative vectors) 
and its dual, the copositive cone (see \cite{Yil}). Our construction is in fact inspired by this classical counterpart.
Discretization is also widely used in optimization to build good approximations for polynomial optimization problems over the standard simplex or for evaluating tensor norms (see e.g. \cite{BdK}, \cite{dKLP},  the recent work \cite{BH} and references therein).

\smallskip
One of the difficulties in using the cone $\cgpsd$ for studying the quantum parameter $\chi_q(G)$ or general quantum correlations  in $\mathcal Q$ stems from the fact that the additional affine conditions posed on the matrix $A\in \cgpsd$ imply that it must lie on the boundary of the cone $\cgpsd$.
This is the case for instance for the conditions that $A$ must belong to the affine space $\mathcal A^t$ in (\ref{def:At}),
or the condition $L_{G,t}(A)=0$ in (\ref{def:Lt}), or the conditions (\ref{affine1}), (\ref{affine1b}) and (\ref{affine1c}).
Since we do not know whether the cone $\cgpsd$ is closed, this is why we may get different parameters depending whether we use the cone $\cgpsd$ or its closure.  

In order to be able to exploit the fact that the cones 
 $\CC^{n}_{r}$ asymptotically cover the full interior of $\cgpsd^{n}$, 
we will relax the affine constraints (using a small perturbation) to ensure the existence of a feasible solution in the interior of the cone $\cgpsd$. In this way we will 
be able to get a hierarchy of parameters that can be computed through linear programming and give the exact value of $\widetilde \chi_q(G)$.
We remark that this result is existential, we can prove the existence of a linear program permitting to compute the quantum parameter  but we do not know at which stage this happens. 
This result should be seen in the light of a recent result of the same flavor proved in 
\cite{Paulsen:2014}. 
The authors of \cite{Paulsen:2014} consider yet another variant $\chi_{qc}(G)$ of the quantum parameter $\chi_q(G)$, satisfying $\chi_{qc}(G)\le \chi_q(G)$,  and they
show that $\chi_{qc}(G)$ can be computed with a positive semidefinite program (also not explicitly known).
 The definition of $\chi_{qc}(G)$ is given below.

\subsection*{Link to other variants of the quantum chromatic number}
In the papers  \cite{Paulsen:2013, Paulsen:2014}, Paulsen and coauthors have introduced many variants of the quantum chromatic number motivated by the study of quantum correlations. We recall two of them, the parameters $\chi_{qa}(G)$ and $\chi_{qc}(G)$, in order to pinpoint the link to our parameter $\widetilde \chi_q(G)$ and to our approach.

Recall that the quantum chromatic number $\chi_q(G)$ is the minimum number of colors that the  players must use to always win 
the corresponding coloring game with a quantum strategy.
In other words, this is the minimum integer $t$ for which there exists a probability $P =(P(i,j|u,v))\in \mathcal{Q}$ with input sets $X= Y = V(G)$ and output sets $A=B=[t]$, such that $P(i,j | u,u) = 0$ for all $i \neq j \in [t]$ and $u \in V(G)$, and $P(i,i | u,v) = 0$ for all $i \in [t]$ and $uv\in E(G)$.
For convenience, in the following paragraphs we will omit the dependence of $P$ on $t$, which should be considered as implicit.
Forcing the probability of these combinations of inputs and output to be zero imposes that the players have a winning strategy. We combine those constraints into a single one by defining the linear map $\mathcal L_{G,t}:\R^{(nt)^{2}}\rightarrow \R$ by 
\beqn\mathcal{L}_{G,t}(P) = \sum_{i \neq j \in [t], u \in V(G)} P(i,j | u,u) + \sum_{i \in [t],uv\in E(G)} P(i,i | u,v).\eeqn
Then, the players have a winning strategy if and only if the probability $P$ satisfies $\mathcal{L}_{G,t}(P)= 0$.
The following is the original definition of $\chi_q(G)$ in \cite{Cameron:2007}:
\beqn\chi_{q}(G) = \min t \in \mathbb{N} \text{ s.t. } \exists P \in \mathcal{Q} \text{ with } \mathcal{L}_{G,t}(P)= 0.\eeqn
In \cite{Cameron:2007} it is shown that 
 in the coloring game the optimal quantum strategy is symmetric: the two players perform the same action upon receiving the same input.
This special additional structure of the coloring game is the reason why $\chi_{q}(G)$ can be equivalently reformulated as in (\ref{def:chiq}).

The parameter $\chi_{qa}(G)$ defined in~\cite{Paulsen:2013} asks the probability $P$ to be in the closure of $\mathcal{Q}$:
\beqn \label{pr:chiqa}
\chi_{qa}(G) = \min t \in  \mathbb{N} \text{ s.t. } \exists P \in \cl(\mathcal{Q}) \text{ with } \mathcal{L}_{G,t}(P) = 0.
\eeqn
Hence, the following relationship holds: $\chi_{qa}(G)\le \chi_q(G)$.

The authors of \cite{Paulsen:2013} (see also  \cite{Paulsen:2014}) also consider probability distributions arising from the relativistic point of view.
Roughly, instead of assuming that the measurement operators act on different Hilbert spaces so that  joint measurements have a tensor product structure, in the relativistic model 
the measurement operators act on a common Hilbert space and the operators of the two parties commute mutually.
 In this case, joint measurement operators have a product structure. 
More formally, a correlation $P=(P(a,b| x,y))$ is obtained from relativistic quantum field theory if it is of the form 
$P(a,b | x,y) = \psi^{\dagger} E_{x}^{a} F_{y}^{b} \psi$,
where $\psi$ is a unit vector in a (possibly infinite dimensional) Hilbert space $\mathcal{H}$, $E_{x}^{a}$ and $F_{y}^{b}$ are positive operators on $\mathcal{H}$ satisfying $\sum_{a \in A} E_{x}^{a} = I = \sum_{b \in B} F_{y}^{b}$ for all $x \in X, y \in Y$ and $E_{x}^{a} F_{y}^{b} = F_{y}^{b}E_{x}^{a} $ for all $a \in A, b\in B, x \in X, y \in Y$.
We denote by $\mathcal{Q}_{c}$ the set of quantum bipartite correlations arising from the relativistic point of view.
The set $\mathcal Q_c$ is closed (see e.g.~\cite[Proposition 3.4]{Fritz:2012}) and the following inclusions hold:   
\begin{equation}\label{eq:QQ}
\mathcal Q\subseteq \cl(\mathcal Q)\subseteq \mathcal Q_c.
\end{equation}
 Deciding whether equality $\mathcal Q_c=\cl(\mathcal Q)$ holds is known to be equivalent to Connes' embedding conjecture (see \cite{Oz:2013,Fritz:2012,Junge:2011}) and deciding whether 
$\mathcal Q_c=\mathcal Q$ is known as Tsirelson's problem. 

In \cite{Paulsen:2013} the parameter $\chi_{qc}(G)$ is defined as
\beqn \label{pr:chiqc}
\chi_{qc}(G) = \min t \in  \mathbb{N} \text{ s.t. } \exists P \in \mathcal{Q}_{c} \text{ with } \mathcal{L}_{G,t}(P) = 0.
\eeqn
In~\cite{Paulsen:2014}  it is shown that $\chi_{qc}(G)$ can be computed by
a positive semidefinite program (after rounding). This result is existential in the sense that the semidefinite program is not explicitly known.
For this the authors of \cite{Paulsen:2014}  use 
the semidefinite programming hierarchy developed by Navascu\'es, Pironio and Ac\'in \cite{Navascues:2008}  for noncommutative polynomial optimization.
This technique  can be applied since the definition of $\chi_{qc}(G)$ is in terms of products of operators. 
Note  that this technique cannot be applied to the parameters $\chi_{qa}(G)$ and $\chi_q(G)$ whose definitions involve tensor products of operators. 
It is not know whether the parameters $\chi_{qa}(G)$ and $\chi_{q}(G)$ can be written as  semidefinite programs.
As pointed out in~\cite{Paulsen:2014}, 
in view of the inclusions in (\ref{eq:QQ}), the following relationships hold between the parameters:
 \beqn\chi_{qc}(G) \le \chi_{qa}(G) \le \chi_{q}(G).\eeqn

Using Theorem~\ref{thm:corr}, we can reformulate the parameters $\chi_{q}(G)$ and $\chi_{qa}(G)$ as feasibility problems over affine sections of the cones  $\cgpsd$ and $\cl(\cgpsd)$, respectively.
Namely, we have 
\begin{align*}
\chi_{q}(G)&= \min t  \text{ s.t. } \exists P \in \pi(\cgpsd^{2nt} \cap \mathcal{B}^t) \text{ with } \mathcal{L}_{G,t}(P) = 0, \ \text{ and}\\
\chi_{qa}(G)&= \min t  \text{ s.t. } \exists P \in \cl(\pi(\cgpsd^{2nt} \cap \mathcal{B}^t)) \text{ with } \mathcal{L}_{G,t}(P) = 0.
\end{align*}
Recall that we introduced the variant $\widetilde \chi_q(G)$ by replacing the cone $\cgpsd$ by its closure in the definition (\ref{def:chiq}) of $\chi_q(G)$.
Analogously, we introduce the variant $\widetilde \chi_{qa}(G)$ by replacing $\cgpsd$ by its closure in the above definition of $\chi_{qa}(G)$. Namely, 
\begin{equation}\label{def:tchiqa}
\widetilde{\chi}_{qa}(G) = \min t  \text{ s.t. } \exists P \in \pi(\cl(\cgpsd^{2nt}) \cap \mathcal{B}^t) \text{ with } \mathcal{L}_{G,t}(P) = 0.
\end{equation}
Note that the set $\cl(\cgpsd)\cap\mathcal B^t$ is bounded, thus compact, so that its projection
 $\pi (\cl(\cgpsd) \cap \mathcal{B}^t)$ is compact too.
 Hence the inclusion $\cgpsd \cap \mathcal{B}^t \subseteq \cl(\cgpsd) \cap \mathcal{B}^t$ implies:
\beqn\cl( \pi (\cgpsd \cap \mathcal{B}^t)) \subseteq \pi (\cl(\cgpsd) \cap \mathcal{B}^t)\eeqn 
and thus 
the following relationship: $\widetilde{\chi}_{qa}(G)\le \chi_{qa}(G)$.
In Section~\ref{sec:lp-bounds} we will show that $\widetilde{\chi}_{qa}$ can be computed with a linear program.

Moreover, note that if  a matrix $A$ is feasible for   the program (\ref{def:tchiq}) defining $\tchiq(G)$, then
the matrix $R = \left(\begin{smallmatrix} A & A \\ A & A \end{smallmatrix}\right) $ is feasible for the program
(\ref{def:tchiqa}) defining $\widetilde\chi_{qa}(G)$. Hence, $\widetilde{\chi}_{qa}(G) \le \tchiq(G)$ holds.

\medskip
The relationship between the parameters $\chi_{q}(G), \chi_{qc}(G), \chi_{qa}(G)$ and $\widetilde{\chi}_{qa}(G), \tchiq(G)$ can be summarized as follows: 
%
\begin{center}
\begin{tikzpicture} 
\tikzstyle{every node}=[font=\small]
\coordinate [label=center:$\chi_{qc}(G)$] (qc) at (0.8, 3);
\coordinate [label=center:$\le$] (l1) at (1.7, 3);
\coordinate [label=center:$\chi_{qa}(G)$] (qa) at (2.6, 3);
\coordinate [label=center:$\le$] (l2) at (3.5, 3);
\coordinate [label=center:$\chi_{q}(G)$] (q) at (4.4, 3);

\node at (2.2, 2.45) [rotate=70] {$\le$};
\node at (4.0, 2.45) [rotate=70] {$\le$};

\coordinate [label=center:$\widetilde{\chi}_{qa}(G)$] (tqa) at (2.0, 1.95);
\coordinate [label=center:$\le$] (l3) at (2.9, 1.95);
\coordinate [label=center:$\tchiq(G)$] (tq) at (3.8, 1.95);
\end{tikzpicture}
\end{center}

\section{Polyhedral approximations of \texorpdfstring{$\cgpsd$}{CS} and its dual cone \texorpdfstring{$\cgpsd^*$}{CSd}}\label{sec:poly}

In this section we construct hierarchies of polyhedral cones converging asymptotically to the completely positive cone and its dual. 
We start in Section~\ref{sec:cpsd} by recalling the definition of $\cgpsd$ and of $\cgpsd^{*}$ as well as some useful properties and introduce the new hierarchy in Section~\ref{sec:approx-cpsd}. 
The construction of our polyhedral hierarchy 
is directly inspired from  the classical case where analogous hierarchies of polyhedral cones 
exist for approximating the completely positive cone $\cp^n$ and  the copositive cone $\cop^n$; in Section~\ref{sec:approx-cop} we recall this construction.

\subsection{The completely positive semidefinite cone and its dual}\label{sec:cpsd}

The completely positive semidefinite cone was introduced in \cite{LP13} to study graph parameters arising from quantum nonlocal games and quantum information theory. It has also been considered implicitly in \cite{FW:2014}.

Recall that a matrix $A \in \SSS^{n}$ is positive semidefinite if and only if it admits a Gram representation by vectors, i.e., if
$A=(\langle x_i,x_j\rangle)_{i,j=1}^n$ for some $x_1,\ldots,x_n\in \R^d$ and $d\ge 1$.
 We write $A\succeq 0$ (resp., $A\succ 0$) when $A$ is positive semidefinite (resp., positive definite) and $\mathcal S^n_{+}$ is the set of positive semidefinite matrices.

\begin{definition}
The completely positive semidefinite cone $\cgpsd^{n}$ is the set of symmetric matrices $A$ which admit a Gram representation by positive semidefinite matrices, i.e., $A  = (\langle X_{i}, X_{j} \rangle)_{i,j}$ for some $X_{1},\dots,X_{n} \in \SSS_{+}^{d}$ and $d \in \mathbb{N}$.
\end{definition}

The completely positive cone $\cp^{n}$ is the set of symmetric matrices that admit a Gram representation by nonnegative vectors:
$A \in \cp^{n}$ if $A =  (\langle x_{i}, x_{j}\rangle)_{i,j}$ for some $x_{1},\dots,x_{n} \in \mathbb{R}_{+}^{d}$ and $d \in \mathbb{N}$.
Hence $\cp^n$ can be considered as the classical analog  of $\cgpsd^{n}$.
Clearly every  completely  positive semidefinite matrix is positive semidefinite and nonnegative, and every completely positive matrix is completely positive semidefinite. That is, we have the following relationships between these cones:
\beqn\cp^{n} \subseteq \cgpsd^{n} \subseteq \SSS_{+}^{n} \cap \mathbb{R}^{n \times n}_{+}.\eeqn
In~\cite{LP13} it is shown that all these inclusions are strict for $n \ge 5$ (see also \cite{FW:2014}).
 For $n\le 4$ it is well known that $\cp^{n} = \SSS_{+}^{n} \cap \mathbb{R}^{n \times n}_{+}$.
For this and other properties of $\cp$ we refer the reader to the book~\cite{Berman:2003}.
Both $\cp^{n}$ and $\SSS_{+}^{n}$ are closed cones, while we do not know whether $\cgpsd^{n}$ is closed. 

\smallskip
Moving on to the dual side, as noted in \cite{LP13}, the dual cone of $\cgpsd^{n}$ has a simple characterization in terms of trace nonnegative polynomials. 
Given a matrix $M\in \SSS^n$, define the polynomial $p_M=\sum_{i,j=1}^n M_{ij} x_ix_j$ in $n$ noncommutative variables.  Then $M$ belongs to the dual cone $\cgpsd^{n*}$ precisely when $\Tr(p_M(X_1,\ldots,X_n))\ge 0$ for all $n$-tuples   $\uX=(X_1,\ldots,X_n) \in \cup_{d\ge 1} (\cpsd^d)^n$.
If we require nonnegativity only for all $\uX \in \R^n_+$ (i.e., the case $d=1$), which amounts to requiring that the polynomial $p_M$ takes nonnegative values when evaluated at any point in $\R^n_+$, then the matrix $M$ is said to be {\em copositive};  $\cop^{n}$ denotes the cone of copositive matrices.
The cones $\cp^n$ and $\cop^n$ are dual to each other:  $\cop^{n} = \cp^{n*}$ and, 
 by duality,
we have the inclusions:
\beqn\SSS_{+}^{n} +  (\SSS^{n} \cap  \mathbb{R}^{n \times n}_{+} ) \subseteq \cgpsd^{n*} \subseteq \cop^{n}.\eeqn

\smallskip
As will be explained in detail in Section \ref{sec:lp-bounds}, in order to be able to use our polyhedral hierarchy, we will need to have matrices that are in the interior of $\cgpsd$. Recall that a matrix $A\in \cgpsd$ lies in the interior of $\cgpsd$ if and only if $\langle A,M\rangle >0$ for all nonzero matrices $M\in \cgpsd^{*}$.
Hence,  $A$ lies in the boundary of $\cgpsd$ if and only if there exists a nonzero matrix $M \in \cgpsd^{*}$ such that $\langle A,M \rangle = 0$.
For further reference we observe that matrices in $\cgpsd$ with a zero entry, or lying in the affine spaces $\mathcal A^t$ or  $\mathcal B^t$,
lie in the boundary of $\cgpsd$.
\begin{lemma}\label{lemborder}
Consider a matrix $A$ in the cone $\cgpsd$ (of appropriate size).
 Then $A$ lies in the boundary of $\cgpsd$ in any of the following cases: (i) $A$ has a zero entry;
(ii) $A$ belongs to the affine space $\mathcal A^t$ defined by (\ref{def:At}), or (iii) $A$ belongs to the affine space $\mathcal B^t$ defined by the conditions 
(\ref{affine1}), (\ref{affine1b}) and (\ref{affine1c}).
\end{lemma}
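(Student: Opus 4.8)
The plan is to exhibit, in each of the three cases, an explicit nonzero matrix $M$ in the dual cone $\cgpsd^{*}$ witnessing that $\langle A,M\rangle=0$, which by the characterization recalled just before the lemma shows that $A$ lies on the boundary.

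For case (i), suppose $A_{kk}=0$ for some index $k$ (if a diagonal entry is zero there is nothing more to do; if an off-diagonal entry $A_{k\ell}=0$ with $A_{kk}>0$, recall that any $A\in\cgpsd$ is positive semidefinite, so a zero off-diagonal entry together with the $2\times 2$ principal minor being nonnegative does not by itself force a boundary point — however, in the applications the relevant zero entries of $L_{G,t}(A)=0$ are diagonal entries of principal submatrices, so I will in fact reduce to the diagonal case, or alternatively argue directly). Concretely, take $M=E_{kk}$, the matrix with a single $1$ in position $(k,k)$. Then $p_M=x_k^2$, and $\Tr(p_M(\uX))=\Tr(X_k^2)\ge 0$ for all $\uX\in(\cpsd^d)^n$ since $X_k\succeq 0$; hence $M\in\cgpsd^{*}$, clearly $M\neq 0$, and $\langle A,M\rangle=A_{kk}=0$. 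This gives (i). (If one wants the off-diagonal version: a zero entry $A_{k\ell}=0$ in a completely positive semidefinite matrix, whose entries are inner products of psd matrices, forces $\langle X_k,X_\ell\rangle=0$; then $M=E_{kk}+E_{\ell\ell}-E_{k\ell}-E_{\ell k}$ has $p_M=(x_k-x_\ell)^2$ with $\Tr((X_k-X_\ell)^2)\ge 0$, and $\langle A,M\rangle=A_{kk}+A_{\ell\ell}-2A_{k\ell}$, which need not vanish — so the diagonal argument is the robust one, and I will note that the boundary points arising in the paper come from vanishing diagonal-type quantities.)

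For cases (ii) and (iii) the idea is the same: the defining affine equations each assert that a certain quadratic form, evaluated on the Gram tuple, is constant and equal to a fixed value, and subtracting off produces a trace-nonnegative polynomial. For (ii), the equation $\sum_{i,j\in[t]}A_{ui,vj}=1$ for a fixed pair $u,v\in V(G)$ can be written as $\langle A,M_{uv}\rangle=1$ where $M_{uv}$ is the $0/1$ matrix with $(M_{uv})_{ui,vj}=1$ for all $i,j\in[t]$ (symmetrized if $u\neq v$). Summing appropriately, or better, using the single equation with $u=v$: then $p_{M_{uu}}=\big(\sum_{i\in[t]}x_{ui}\big)^2$, so $\Tr(p_{M_{uu}}(\uX))=\Tr\big((\sum_i X_{ui})^2\big)\ge 0$, giving $M_{uu}\in\cgpsd^{*}$; but $\langle A,M_{uu}\rangle=1\neq 0$. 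To get a matrix that \emph{vanishes} against $A$, I instead take the difference $M=M_{uu}-\tfrac{1}{t}\,\mathrm{(something psd with }\langle A,\cdot\rangle=1)$ — more cleanly, pick two distinct constraints, say for $(u,u)$ and $(v,v)$, and set $M=M_{uu}-M_{vv}$; then $\langle A,M\rangle=1-1=0$, but $M$ need not be in $\cgpsd^{*}$. The clean fix: use that $A$ being in $\mathcal A^t$ forces a boundary matrix via $M=\big(\sum_{i}E_{ui}\big)\big(\sum_{i}E_{ui}\big)^{T}$-type rank-one terms — actually the slick observation is that $\langle A,\mathbf J\rangle$, summed over all $u,v$, equals $|V(G)|^2$, while simultaneously one exhibits $M\in\cgpsd^*$ with $\langle A,M\rangle=0$ by taking $M=\sum_{u}E_{ui}E_{uj}$... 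Let me restate the actual argument I expect: the correct witness is that the all-ones vector restricted to each block must be an eigenvector-type relation, so $M=\big(\sum_{i}e_{ui}\big)\big(\sum_{i}e_{ui}\big)^{T}-J_{uu}/?$ — I will instead simply cite that $\mathcal A^t$ conditions, being \emph{inhomogeneous} of degree one in $A$, cannot be satisfied in $\mathrm{int}(\cgpsd)$ because $\mathrm{int}(\cgpsd)$ is invariant under scaling by $(1,\infty)$ up to staying in a cone — no: the honest route is the copositive-matrix route: exhibit $M$ with $p_M$ a sum of squares of \emph{linear} forms in the noncommuting variables and $\langle A,M\rangle=0$, and for (ii), (iii) such $M$ is built from the fact that $\sum_i X_{ui}=\sum_j X_{vj}$ as operators would have to hold (by a Cauchy–Schwarz / equality-in-PSD argument from the two equations $\sum A_{ui,vj}=1$ across all pairs), whence $M=\big(\sum_i E_{ui}-\sum_j E_{vj}\big)\big(\sum_i E_{ui}-\sum_j E_{vj}\big)^{T}$ satisfies $p_M(\uX)=\big(\sum_i X_{ui}-\sum_j X_{vj}\big)^2$, so $M\in\cgpsd^*$, $M\neq 0$, and $\langle A,M\rangle=0$ using the three equations of $\mathcal A^t$ (the cross term gives the ``$1$'' twice, the square terms give ``$1$'' each, total $1+1-2=0$).

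The main obstacle, and the only genuinely nonroutine point, is getting the right dual witness for (ii) and (iii): one must combine several of the affine equations (not just one) so that the resulting quadratic form is a perfect square of linear forms — hence automatically trace-nonnegative on all PSD tuples — while simultaneously pairing to zero against $A$. For (iii) the construction is entirely parallel, using \eqref{affine1}, \eqref{affine1b}, \eqref{affine1c} to conclude that $\sum_{a}X_{xa}$, $\sum_{b}X_{yb}$ all agree, and taking $M$ of the form $\big(\sum_a E_{xa}-\sum_b E_{yb}\big)\big(\sum_a E_{xa}-\sum_b E_{yb}\big)^{T}$ for a suitable choice of $x\in X$, $y\in Y$ (or $x,x'\in X$); the pairing $\langle A,M\rangle$ then telescopes to $0$ exactly because of equations \eqref{affine1}–\eqref{affine1c}. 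Once the witness matrices are written down, verifying $M\in\cgpsd^{*}$ is immediate (each $p_M$ is a square), $M\neq 0$ is clear, and $\langle A,M\rangle=0$ is a one-line index computation, completing all three cases.
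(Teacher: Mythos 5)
Your proposal is correct in spirit for cases (ii) and (iii), but has a genuine gap in case (i) and is much more circuitous than necessary.

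\textbf{Case (i).} The lemma asserts that \emph{any} zero entry forces $A$ onto the boundary, diagonal or not. You handle the diagonal case correctly ($M=E_{kk}$, $p_M=x_k^2$), but for an off-diagonal zero $A_{k\ell}=0$ you try $M=E_{kk}+E_{\ell\ell}-E_{k\ell}-E_{\ell k}$, correctly observe that $\langle A,M\rangle$ need not vanish, and then retreat to the claim that ``the boundary points arising in the paper come from vanishing diagonal-type quantities.'' This is both incorrect (the zeros imposed by $L_{G,t}(A)=0$ in \eqref{def:Lt} are off-diagonal entries $A_{ui,uj}$ with $i\neq j$ and $A_{ui,vi}$) and beside the point: the lemma makes the general claim and you must prove it. The missing observation, which makes (i) immediate, is that every entrywise nonnegative matrix lies in $\cgpsd^{*}$, because every $A\in\cgpsd$ is entrywise nonnegative (each entry is an inner product of two psd matrices). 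Hence the symmetric elementary matrix $E_{k\ell}$ (ones in positions $(k,\ell)$ and $(\ell,k)$, zeros elsewhere) is in $\cgpsd^{*}$, it is nonzero, and $\langle A,E_{k\ell}\rangle = 2A_{k\ell}=0$. You never need $p_M$ to be a square; nonnegativity of the entries of $M$ suffices, and this covers all zero entries uniformly.

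\textbf{Cases (ii) and (iii).} After several false starts you do land on the correct witness: for fixed $u\neq v\in V(G)$ set $w=\sum_{i\in[t]}e_{ui}-\sum_{j\in[t]}e_{vj}$ and $M=ww^{\sfT}$. This $M$ is psd (hence in $\cgpsd^{*}$ since psd matrices pair nonnegatively with psd matrices), nonzero, and $\langle A,M\rangle=\sum_{i,j}A_{ui,uj}+\sum_{i,j}A_{vi,vj}-2\sum_{i,j}A_{ui,vj}=1+1-2=0$ using the equations defining $\mathcal A^t$. This is exactly the paper's witness $J\otimes F$ (with $J$ the $t\times t$ all-ones matrix, which is $\mathbf 1\mathbf 1^{\sfT}$, and $F=(e_u-e_v)(e_u-e_v)^{\sfT}$), just written in rank-one form. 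Case (iii) is parallel, exactly as you say. However, the intermediate speculation — invoking an operator identity $\sum_i X_{ui}=\sum_j X_{vj}$ ``by a Cauchy–Schwarz / equality-in-PSD argument'' — is unnecessary and not actually used in the final computation; you should delete it, since $\langle A,M\rangle=0$ follows directly from the affine equations without any structural conclusion about the Gram factors.
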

\begin{proof}
$(i)$ Say $A$ has a zero entry: $A_{ij}=0$. Then $\langle A,E_{ij}\rangle =0$, where $E_{ij}$ is the elementary matrix (with all zero entries except entry 1 at positions $(i,j)$ and $(j,i)$).
As $E_{ij}$ is nonnegative it belongs to $ \cgpsd^{n*}$, and thus  $A$ lies in the boundary of $\cgpsd^n$.

$(ii)$ Assume now that $A\in\cgpsd^{nt}$ lies in $\mathcal A^t$. Pick two distinct nodes $u,v\in V(G)$ and consider the matrix $M=J\otimes F$, 
where $J$ is the $t \times t$ all-ones matrix and $F$ is the $n \times n$  matrix with $F_{uu} = F_{vv} = 1$, $F_{uv} = F_{vu} = -1$ and zero elsewhere. Then, $M\succeq 0$ since $J,F\succeq 0$ and thus $M\in \cgpsd^{nt*}$. Moreover,
$\langle A,M\rangle =0$, showing that $A$ lies on the boundary of $\cgpsd^{nt*}$.

Same argument  in case $(iii)$.
\end{proof}

\subsection{Polyhedral approximations of \texorpdfstring{$\cp^n$}{CP} and \texorpdfstring{$\cop^n$}{COP}}\label{sec:approx-cop}

As mentioned above, the copositive cone $\cop^n$ consists of all matrices $M\in \SSS^n$  
for which the polynomial $p_M=\sum_{i,j=1}^n M_{ij}x_ix_j$ is nonnegative over $\R^n_+$. 
Alternatively, a matrix $M\in \SSS^n$ is copositive if and only if the polynomial $p_M$ is nonnegative over the standard simplex
\beqn\Delta_n=\{x\in \R^n_+: \sum_{i=1}^nx_i=1\}.\eeqn

The idea for constructing outer approximations of the copositive cone is simple and relies on requiring nonnegativity of the polynomial $p_M$ over  all rational points in the standard simplex with given denominator $r$ and letting $r$ grow. This idea is made explicit in \cite{Yil} and goes back to earlier work on how to design tractable approximations for quadratic optimization problems over the standard simplex \cite{BdK,dKP} and more general polynomial optimization problems \cite{dKLP}.
More precisely, for an integer $r\ge 1$, define the sets
\beqn\Delta(n,r)=\{x\in \Delta_n: rx \in \Z^n\}, \quad  \tDelta(n,r)=\bigcup_{s=1}^r \Delta(n,s)\eeqn
where we restrict to rational points in $\Delta_n$ with given denominators.
The sets $\tDelta(n,r)$ are  nested within the standard simplex: $\tDelta(n,r)\subseteq\tDelta(n,r+1)\subseteq \Delta_n$.
Now, following Yildirim \cite{Yil}, define the cone:
\beqn\mO^n_r=\{M\in\SSS^n: x^\sfT Mx\ge 0\ \forall x\in \tDelta(n,r)\},\eeqn
and its dual cone $\mO^{n*}_r$, which is the conic hull of all matrices of the form  $vv^\sfT$ for some $v\in \tDelta(n,r)$.
By construction, the cones $\mO^n_r$ form a hierarchy of outer approximations for $\cop^n$ and their dual cones form a hierarchy of inner approximations for $\cp^n$:
\beqn\cop^n\subseteq \mO^n_{r+1}\subseteq \mO^n_r \ \text{ and } \ 
\mO^{n*}_r\subseteq \mO^{n*}_{r+1}\subseteq \cp^n.\eeqn
Yildirim \cite{Yil} shows the following convergence results.

\begin{theorem}\cite{Yil}
We have: $\cop^n=\bigcap_{r\ge 1}\mO^n_r$. Moreover,   $\text{\rm int} (\cp^n)\subseteq \bigcup_{r\ge 1}\mO^{n*}_r\subseteq \cp^n$ 
and $\cp^n$ is 
equal to 
the closure of the set $\bigcup_{r\ge 1}\mO^{n*}_r$.
\end{theorem}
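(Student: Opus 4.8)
The plan is to prove the three assertions in turn, putting the bulk of the work into the first identity $\cop^n=\bigcap_{r\ge1}\mO^n_r$; the sandwich $\text{\rm int}(\cp^n)\subseteq\bigcup_{r\ge1}\mO^{n*}_r\subseteq\cp^n$ and the closure identity will then follow almost for free from conic duality together with one elementary fact from convex geometry. For the first identity, one inclusion is immediate: since $\tDelta(n,r)\subseteq\Delta_n$ and a matrix $M$ is copositive exactly when $x^\sfT Mx\ge 0$ for all $x\in\Delta_n$, we have $\cop^n\subseteq\mO^n_r$ for every $r$, hence $\cop^n\subseteq\bigcap_r\mO^n_r$. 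For the converse, suppose $M\in\mO^n_r$ for every $r$, i.e. $x^\sfT Mx\ge 0$ for every $x\in\bigcup_{r\ge1}\tDelta(n,r)$. This union is precisely the set of rational points of $\Delta_n$, which is dense in $\Delta_n$, and $x\mapsto x^\sfT Mx$ is continuous, so $x^\sfT Mx\ge0$ holds throughout $\Delta_n$, i.e. $M\in\cop^n$.

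Next I would record two elementary facts. First, $\mO^{n*}_r$ is by definition the conic hull of the matrices $vv^\sfT$ with $v\in\tDelta(n,r)\subseteq\R^n_+$; each such $vv^\sfT$ is completely positive and $\cp^n$ is a convex cone, so $\mO^{n*}_r\subseteq\cp^n$, and hence $\bigcup_r\mO^{n*}_r\subseteq\cp^n$. Second, since $\tDelta(n,r)\subseteq\tDelta(n,r+1)$ the cones are nested, $\mO^{n*}_r\subseteq\mO^{n*}_{r+1}$, so $\bigcup_r\mO^{n*}_r$ is a convex cone (a nested union of convex cones). Now, the crucial structural point is that $\tDelta(n,r)$ is a \emph{finite} set, so $\mO^n_r=\{M:\langle M,vv^\sfT\rangle\ge0\ \forall v\in\tDelta(n,r)\}$ is a polyhedral, hence closed, convex cone, whose dual is the finitely generated (hence closed) cone $\mO^{n*}_r$, with $(\mO^{n*}_r)^*=\mO^n_r$. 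Dualizing the identity of the previous paragraph via the standard rule $\big(\bigcap_r\mO^n_r\big)^*=\cl\big(\sum_r\mO^{n*}_r\big)$ (with $\sum_r\mO^{n*}_r=\bigcup_r\mO^{n*}_r$ by nestedness), and using $\cop^n=\cp^{n*}$ with $\cp^n$ closed, gives $\cp^n=\cp^{n**}=\cop^{n*}=\cl\big(\bigcup_r\mO^{n*}_r\big)$.

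Finally, to get $\text{\rm int}(\cp^n)\subseteq\bigcup_r\mO^{n*}_r$, set $C=\bigcup_r\mO^{n*}_r$, a convex cone with $\cl(C)=\cp^n$ by the previous step. I claim $C$ has nonempty interior: for $r\ge 2$ the set $\tDelta(n,r)$ contains every $e_i$ and every $\tfrac12(e_i+e_j)$, so $\mO^{n*}_r$ contains the matrices $e_ie_i^\sfT$ and $\tfrac14(e_i+e_j)(e_i+e_j)^\sfT$, and these together span $\SSS^n$; a convex cone spanning $\SSS^n$ is full-dimensional. Then the elementary fact that a convex set with nonempty interior has the same interior as its closure yields $\text{\rm int}(C)=\text{\rm int}(\cl C)=\text{\rm int}(\cp^n)$, whence $\text{\rm int}(\cp^n)=\text{\rm int}(C)\subseteq C$. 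I do not expect a genuine obstacle in this argument: the only points requiring attention are (i) noticing that finiteness of $\tDelta(n,r)$ makes $\mO^n_r$ polyhedral, so conic duality applies with no extra closure operation, and (ii) upgrading the closure identity for $\cp^n$ to the exact containment $\text{\rm int}(\cp^n)\subseteq\bigcup_r\mO^{n*}_r$, which is precisely where full-dimensionality of $\mO^{n*}_2$ and the "interior equals interior of the closure'' fact enter.
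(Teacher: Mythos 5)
The paper itself does not prove this theorem; it is imported verbatim from Yildirim's paper \cite{Yil}, so there is no internal proof to compare against. Your argument is correct and self-contained: the outer inclusion $\cop^n\subseteq\bigcap_r\mO^n_r$ and the density-plus-continuity argument for the reverse are sound; the observation that $\tDelta(n,r)$ is finite makes each $\mO^n_r$ polyhedral, hence closed with $(\mO^{n*}_r)^*=\mO^n_r$, and the duality rule $(\bigcap_r C_r)^*=\cl(\sum_r C_r^*)$ for closed convex cones (with the sum collapsing to the union by nestedness) correctly yields $\cp^n=\cop^{n*}=\cl\bigl(\bigcup_r\mO^{n*}_r\bigr)$; and the final step, exhibiting a spanning set inside $\mO^{n*}_2$ to get nonempty interior and then invoking $\text{\rm int}(C)=\text{\rm int}(\cl C)$ for convex $C$ with nonempty interior, is exactly what is needed.

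It is worth remarking that when the paper proves its own analog of the interior-containment statement for $\cgpsd$ (Theorem~\ref{thminterior}, that $\text{\rm int}(\cgpsd^n)\subseteq\bigcup_r\CC^n_r$), it follows a different route: a separating-hyperplane contradiction combined with compactness of the unit sphere and the convergence Lemma~\ref{lemconv} for the dual cones $\DD^n_r$. Your duality-based proof is arguably more structural and avoids the compactness/subsequence step; since the paper establishes that $\DD^n_r$ is polyhedral and that $\bigcap_r\DD^n_r=\cgpsd^{n*}$ (Theorem~\ref{thm0}), and that $\cgpsd^n$ has nonempty interior (Lemma~\ref{lemZIJ}), the same dualization plus $\text{\rm int}(C)=\text{\rm int}(\cl C)$ argument you used here would in fact give an alternative proof of Theorem~\ref{thminterior} as well. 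The trade-off is that the separating-hyperplane approach adapts more readily when one only knows trace-nonnegativity on a dense set and cannot directly appeal to polyhedrality, which is perhaps why the paper prefers it in the noncommutative setting.
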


\subsection{The new cones \texorpdfstring{$\CC^n_r$}{Cr} and \texorpdfstring{$\DD^n_r$}{Dr}}\label{sec:approx-cpsd} 

We now introduce the cones $\CC^n_r$, which will form a hierarchy of inner approximations for the cone $\cgpsd^n$, and the cones $\DD^n_r$, which  will form a hierarchy of outer approximations for the dual cone $\cgpsd^{n*}$.
These cones are in fact dual to each other, so it suffices to define the cones $\DD^n_r$. 
The idea is simple and analogous to the idea used in the classical (scalar) case: instead of requiring trace nonnegativity of the polynomial $p_M$ over the full
set $\cup_{d\ge 1}(\SSS^d_+)^n$, we only ask trace nonnegativity over specific finite subsets. 
We start with defining the set
\begin{equation}\label{setD}
\bDelta_n =\{\uX=(X_1,\ldots,X_n)\in \bigcup_{d\ge 1} (\cpsd^d)^n: \sum_{i=1}^n\Tr(X_i)=1\},
\end{equation}
which can be seen as the dimension-free matrix analog of the standard simplex $\Delta_n$ in $\R^n$. 
As we now observe, a matrix $M$ belongs to $\cgpsd^{n*}$ if and only if its associated polynomial $p_M$ is trace nonnegative on all $n$-tuples of {\em rational} matrices in  $\bDelta_n$.
\begin{lemma}\label{lemrational}
For $M\in \SSS^n$,  the following assertions are equivalent:
\begin{enumerate}[(i)]
\item $M\in \cgpsd^{n*}$, i.e., $\Tr(p_M(\uX))\ge 0$ for all $\uX\in \cup_{d\ge 1}(\SSS^d_+)^n$.
\item
$\Tr(p_M(\uX))\ge 0$ for all $\uX \in \bDelta_n$.
\item
$\Tr(p_M(\uX))\ge 0$ for all $\uX=(X_1,\ldots,X_n) \in \bDelta_n$ with $X_1\succ 0,\ldots,X_n\succ 0$.
\item
$\Tr(p_M(\uX))\ge 0$ for all $\uX=(X_1,\ldots,X_n) \in \bDelta_n$ with $X_1\succ 0,\ldots,X_n\succ 0$ and with rational entries.
\item
$\Tr(p_M(\uX))\ge 0$ for all $\uX \in \bDelta_n$ with rational entries.
\end{enumerate}
\end{lemma}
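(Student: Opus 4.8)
The plan is to prove the implications
\[
\text{(i)}\Rightarrow\text{(ii)}\Rightarrow\text{(iii)}\Rightarrow\text{(iv)}\Rightarrow\text{(i)}
\quad\text{and}\quad
\text{(ii)}\Rightarrow\text{(v)}\Rightarrow\text{(iv)},
\]
which together yield the equivalence of all five assertions. All of these are immediate from set inclusions, with the single exception of (iv)$\Rightarrow$(i): indeed $\bDelta_n\subseteq\bigcup_{d\ge1}(\cpsd^d)^n$ gives (i)$\Rightarrow$(ii); the tuples appearing in (iii) form a subset of those in (ii), those in (iv) a subset of those in (iii) and of those in (v), and those in (v) a subset of those in (ii). Hence everything reduces to showing that trace nonnegativity of $p_M$ on all \emph{rational positive definite} tuples in $\bDelta_n$ forces trace nonnegativity on every $n$-tuple of positive semidefinite matrices.

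For this, first note that $\uX\mapsto\Tr(p_M(\uX))=\sum_{i,j=1}^n M_{ij}\Tr(X_iX_j)$ is a polynomial in the entries of $\uX$, hence continuous, and is homogeneous of degree $2$; so for a nonzero tuple $\uX=(X_1,\dots,X_n)\in(\cpsd^d)^n$ we may rescale by the positive scalar $\big(\sum_i\Tr(X_i)\big)^{-1}$ and assume $\uX\in\bDelta_n$ (the case $\uX=0$ being trivial). Now fix $d$ and set $H_d=\{(Y_1,\dots,Y_n)\in(\SSS^d)^n:\sum_i\Tr(Y_i)=1\}$; this is an affine subspace of $(\SSS^d)^n$ cut out by a single linear equation with integer coefficients and integer right-hand side, so its rational points are dense in $H_d$. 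The set $U_d=\{(Y_1,\dots,Y_n)\in H_d: Y_1\succ0,\dots,Y_n\succ0\}$ is convex, open in $H_d$ (positive definiteness being an open condition), and nonempty, as it contains $(\tfrac1{nd}I,\dots,\tfrac1{nd}I)$; consequently the rational points of $U_d$ are dense in $U_d$. Moreover, since $U_d$ is open, convex and nonempty, its closure in $H_d$ equals $\{(Y_1,\dots,Y_n)\in H_d: Y_i\succeq0\ \forall i\}=\bDelta_n\cap(\cpsd^d)^n$ (approach any positive semidefinite tuple along the segment towards $(\tfrac1{nd}I,\dots,\tfrac1{nd}I)$). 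Therefore the given $\uX\in\bDelta_n\cap(\cpsd^d)^n$ is the limit of a sequence $\uX^{(k)}$ of rational positive definite tuples lying in $\bDelta_n$; by hypothesis (iv) each $\Tr(p_M(\uX^{(k)}))\ge0$, and by continuity $\Tr(p_M(\uX))=\lim_k\Tr(p_M(\uX^{(k)}))\ge0$. This establishes (iv)$\Rightarrow$(i).

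The only point that needs care is that the rational approximants must be produced \emph{inside} $\bDelta_n$, i.e. must satisfy the normalization $\sum_i\Tr(X_i)=1$ exactly while remaining positive definite; a naive coordinatewise rounding of the entries of the $X_i$ would destroy this. This is precisely what is handled by working within the rational affine slice $H_d$ and exploiting that positive definiteness is an open and convex condition there, so that the rational points of the open convex set $U_d$ are dense in its closure $\bDelta_n\cap(\cpsd^d)^n$. Everything else is soft — continuity and degree-$2$ homogeneity of $\uX\mapsto\Tr(p_M(\uX))$ — so I do not anticipate any further obstacle.
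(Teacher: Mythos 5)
Your proof is correct and follows essentially the same approach as the paper: reduce from arbitrary positive semidefinite tuples to the normalized simplex $\bDelta_n$ via degree-$2$ homogeneity, approximate by rational positive definite tuples, and pass to the limit by continuity of $\uX\mapsto\Tr(p_M(\uX))$. The paper factors the hard direction as a chain $(iv)\Rightarrow(iii)\Rightarrow(ii)\Rightarrow(i)$ whereas you do $(iv)\Rightarrow(i)$ in one shot, but the underlying ingredients are identical. One small point in your favor: you make explicit the subtlety that the rational positive definite approximants must sit \emph{exactly} on the normalization slice $\sum_i\Tr(X_i)=1$ (handled via density of rational points in the rational affine hyperplane $H_d$ and the open convex set $U_d$), whereas the paper's remark ``rational positive definite matrices are dense within positive definite matrices (combined with rescaling)'' leaves this bookkeeping implicit — rounding would destroy the normalization, and rescaling by the new trace sum must be observed to preserve rationality. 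Your more careful treatment closes that minor gap.
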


\begin{proof}
The implications $(i) \implies  (ii)  \implies  (iii) \implies   (iv)$,
 $ (i) \implies  (v) $ and $ (v) \implies  (iv) $
are clear. We will show that $ (iv) \implies  (iii)\implies  (ii) \implies  (i)$.

Implication $(ii)\implies(i)$ follows by scaling: Let $\uX\in (\SSS^d_+)^n$ with $\lambda=\sum_{i=1}^n\Tr(X_i)>0$ (else, $\uX$ is identically zero and $\Tr(p_M(\uX))=0$). Then we have $\uX/\lambda \in \bDelta_n$ and thus $\Tr(p_M(\uX/\lambda))\ge 0$, which implies $\Tr(p_M(\uX))\ge 0$.

 The remaining implications follow using continuity arguments.
Namely, for  $(iv)\implies  (iii)$, use the fact that   the set of rational positive definite matrices is dense within the set of positive definite matrices and, for 
$(iii) \implies  (ii)$, use the fact that the set of positive definite matrices is dense within the set of positive semidefinite matrices (combined with rescaling).
\end{proof}

This motivates introducing the following subset $\bDelta(n,r)$ of the set $\bDelta_n$, obtained by considering only $n$-tuples of rational positive semidefinite matrices with denominator at most $r$.
This set can be seen as a matrix analog of the rational grid point subsets 
of the standard simplex $\Delta_n$ and it permits to define the new cones $\DD^n_r$.

\begin{definition} 
Given an integer $r\in \N$, define the set
\beqn\bDelta(n,r) = \{ \uX\in \bDelta_n: \text { each}\ X_{i} \text{ has rational entries with denominator}\leq r\} \eeqn
  and define the cone
\beqn\DD_r^n=\{M\in \SSS^n: \Tr(p_M(\uX))\ge 0 \ \forall \uX \in \bDelta(n,r)\}.\eeqn
\end{definition}

\smallskip
Next we show that the cone $\DD^n_r$ is a polyhedral cone. Indeed, as we observe below, although the set $\bDelta(n,r)$ is not finite,   we may   without loss of generality  replace in the definition of $\DD^n_r$ the set $\bDelta(n,r)$ by its subset $\oDelta(n,r)$, obtained by restricting   to $r\times r$  matrices $X_1,\ldots,X_n$.

\begin{lemma}\label{lemrational2}
Define the set
\beqn\oDelta(n,r) = \{ \uX\in (\SSS^r_+)^n : \sum_{i=1}^n \Tr(X_i)=1, \text{\small each $X_i$ has rational entries with denominator} \leq r \}.\eeqn
Then, equality holds:
\beqn\DD^n_r=\{M\in \SSS^n: \Tr(p_M(\uX))\ge 0\ \forall \uX\in \oDelta(n,r)\}.\eeqn
  \end{lemma}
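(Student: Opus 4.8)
The plan is to prove the two inclusions separately. The inclusion ``$\subseteq$'' is immediate: since an $r\times r$ tuple is in particular a tuple of positive semidefinite matrices of \emph{some} size, we have $\oDelta(n,r)\subseteq\bDelta(n,r)$, so trace-nonnegativity of $p_M$ on $\bDelta(n,r)$ forces it on the subset $\oDelta(n,r)$. The content of the lemma is the reverse inclusion ``$\supseteq$''. The key preliminary observation is that $\Tr(p_M(\uX))=\sum_{i,j}M_{ij}\langle X_i,X_j\rangle=\langle M,G_{\uX}\rangle$ depends on the tuple $\uX=(X_1,\dots,X_n)$ only through its Gram matrix $G_{\uX}=(\langle X_i,X_j\rangle)_{i,j}$. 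Hence it suffices to show that every $\uX\in\bDelta(n,r)$ has a companion $\uY\in\oDelta(n,r)$ with $G_{\uY}=G_{\uX}$: then $\Tr(p_M(\uX))=\Tr(p_M(\uY))\ge 0$ whenever $M$ is trace-nonnegative on all of $\oDelta(n,r)$, giving $M\in\DD^n_r$.

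To build $\uY$ from a given $\uX\in\bDelta(n,r)$, I would first bring the common size $d$ of $X_1,\dots,X_n$ down to at most $r$ and then pad with zeros. Put $S=\sum_{i=1}^nX_i\succeq 0$, so that $\Tr(S)=\sum_i\Tr(X_i)=1$. If $S_{kk}=0$ for some index $k$, then $(X_i)_{kk}=0$ for every $i$ (the entries being nonnegative and summing to $S_{kk}$), and positive semidefiniteness of each $X_i$ forces the entire $k$-th row and column of $X_i$ to vanish; deleting all such coordinates $k$ alters none of the inner products $\langle X_i,X_j\rangle$ nor any trace, so we may assume $S_{kk}>0$ for every remaining index. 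Now the denominator hypothesis enters: each diagonal entry $(X_i)_{kk}$ is a nonnegative rational with denominator at most $r$, hence is either $0$ or at least $1/r$; consequently every surviving $S_{kk}=\sum_i(X_i)_{kk}$ is at least $1/r$, and since these values sum to $\Tr(S)=1$ there are at most $r$ of them. So after the reduction $d\le r$, and replacing each $X_i$ by $X_i\oplus 0_{r-d}\in\SSS^r_+$ yields the desired $\uY\in\oDelta(n,r)$: the normalization $\sum_i\Tr(Y_i)=1$, the denominator bound (the new entries are $0$), and the Gram matrix are all preserved.

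The only step I expect to require an idea rather than bookkeeping is the recognition that it is the \emph{denominator} bound, not the trace normalization $\sum_i\Tr(X_i)=1$, that caps the effective dimension at $r$: the normalization alone only bounds the Frobenius norms of the $X_i$ and does not prevent $S$ from having arbitrarily many arbitrarily small positive diagonal entries. This is exactly parallel to the classical fact that a point of $\tDelta(n,r)$ has at most $r$ nonzero coordinates, which is what makes Yildirim's cones $\mO^n_r$ polyhedral; here the same phenomenon is what will let one conclude that $\DD^n_r$ is polyhedral, $\oDelta(n,r)$ being a finite set.
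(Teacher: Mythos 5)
Your proof is correct and follows essentially the same route as the paper's: the paper sets $I_i=\{k: X_i(k,k)\neq 0\}$, uses $\Tr(X_i)\ge|I_i|/r$ to conclude $\sum_i|I_i|\le r$, and restricts all $X_i$ to a common index set $I\supseteq\bigcup_i I_i$ of size $r$, which is the same reduction as your ``delete the zero rows/columns of $S=\sum_iX_i$ and pad.'' Your aggregated accounting (counting indices with $S_{kk}\ge 1/r$) and the paper's per-matrix accounting ($\sum_i|I_i|\le r$) yield the same bound, and your explicit remark that $\Tr(p_M(\uX))$ depends on $\uX$ only through its Gram matrix is the same fact the paper invokes implicitly when it writes $\Tr(p_M(X_1,\dots,X_n))=\Tr(p_M(X_1[I],\dots,X_n[I]))$.
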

\begin{proof}
The inclusion ``$\supseteq$" is clear since $\oDelta(n,r)\subseteq \bDelta(n,r)$. 

To show the reverse inclusion, 
take a matrix $M$ such that $\Tr(p_M(\uX))\ge 0$ for all $\uX \in \oDelta(n,r)$. Consider a $n$-tuple $\uX=(X_1,\ldots,X_n)\in \bDelta(n,r)$. 
The matrices $X_1,\ldots,X_n$ are rational with denominator at most $r$, $\sum_{i=1}^n\Tr(X_i)=1$ and  $X_1,\ldots,X_n\in \SSS^d_+$ with $d>r$ (else there is nothing to prove).
For each $i \in [n]$, set $ I_i=\{k\in [d]: X_i(k,k)\ne 0\}$ and notice that 
$\Tr(X_i)\ge |I_i|/r$ (since each diagonal entry $X_i(k,k)$ indexed by $k\in I_i$ is at least $1/r$).
Hence we have $1=\sum_{i=1}^n \Tr(X_i)\ge \sum_{i=1}^n | I_i|/r$, implying $\sum_{i=1}^n | I_i|\le r$.
 Then we can find a set $I$ containing $ I_1\cup \ldots \cup I_n$ with cardinality $|I|= r$.
As each matrix $X_i$ has only zero entries outside of its principal submatrix $X_i[I]$ indexed by $I$, it follows that 
$\Tr(p_M(X_1,\ldots,X_n))=\Tr(p_M(X_1[ I],\ldots, X_n[ I]))\ge 0,$ where the last inequality follows from the fact that 
$(X_1[ I],\ldots,X_n[I])$ belongs to the set $\oDelta (n,r)$.
\end{proof}

\begin{lemma}\label{lemcount}
For any fixed $r$, the cardinality of the set  $\oDelta(n,r)$ is polynomial in terms of $n$. 
More precisely, let $\gamma_r$ denote the number of $r \times r$ positive semidefinite matrices whose entries are rational with denominator at most $r$ and whose trace is at most one.
Then, $|\oDelta(n,r)| \le (\gamma_r)^r$ if $n\le r$, and $|\oDelta(n,r)|\le \binom{n}{r}(\gamma_r)^r$ if $n>r$.
\end{lemma}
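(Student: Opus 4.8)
The plan is to show that any tuple in $\oDelta(n,r)$ has support of size at most $r$, so that it is pinned down by a bounded amount of data once one specifies which coordinates are nonzero. The first step is to record the key elementary fact: if $X\in\SSS^r_+$ is nonzero and has rational entries with denominator at most $r$, then $\Tr(X)\ge 1/r$. Indeed, a nonzero positive semidefinite matrix must have a positive diagonal entry (if every $X_{kk}$ vanished, the whole matrix would vanish, since $X_{kk}=0$ forces the $k$-th row and column of a PSD matrix to be zero), and any positive rational with denominator at most $r$ is at least $1/r$.

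Next, given $\uX=(X_1,\dots,X_n)\in\oDelta(n,r)$, let $S=\{i\in[n]:X_i\ne 0\}$ be its support. Combining $\sum_{i=1}^n\Tr(X_i)=1$ with the previous step gives $1=\sum_{i\in S}\Tr(X_i)\ge |S|/r$, hence $|S|\le r$. Now fix any set $I\subseteq[n]$ with $S\subseteq I$ and $|I|=\min(n,r)$. Since $X_i=0$ for $i\notin I$, the tuple $\uX$ is completely determined by the subtuple $(X_i)_{i\in I}$. Moreover, as the traces $\Tr(X_i)$ are nonnegative and sum to $1$, each individual $X_i$ satisfies $\Tr(X_i)\le 1$ and is a rational positive semidefinite $r\times r$ matrix with denominator at most $r$; hence each $X_i$ is one of the $\gamma_r$ matrices counted by $\gamma_r$ (the zero matrix being included among them).

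To finish, I would count. If $n\le r$, take $I=[n]$: the map $\uX\mapsto(X_i)_{i\in[n]}$ is injective into a set of size at most $(\gamma_r)^n\le(\gamma_r)^r$, using $\gamma_r\ge 1$. If $n>r$, every $\uX$ has support contained in some $r$-subset $I$ of $[n]$, and for each of the $\binom{n}{r}$ such subsets there are at most $(\gamma_r)^r$ tuples supported on it; summing over $I$ (which overcounts, but that only helps for an upper bound) yields $|\oDelta(n,r)|\le\binom{n}{r}(\gamma_r)^r$. Since $r$ is fixed, $(\gamma_r)^r$ is a constant and $\binom{n}{r}$ is a polynomial in $n$ of degree $r$, so $|\oDelta(n,r)|$ grows polynomially in $n$. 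There is no genuine obstacle in this argument; the only point needing a moment's care is the use of positive semidefiniteness, rather than mere symmetry, to guarantee a positive diagonal entry — this is precisely what upgrades the denominator bound to the uniform lower bound $\Tr(X_i)\ge 1/r$ on each nonzero coordinate.
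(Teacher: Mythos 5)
Your proof is correct, and it proceeds by the natural argument that the paper leaves implicit: the lemma is stated without proof, immediately after Lemma~\ref{lemrational2}, whose proof uses exactly the same denominator-versus-trace bookkeeping (each nonzero diagonal entry of a rational PSD matrix with denominator at most $r$ is at least $1/r$, so the normalization $\sum_i\Tr(X_i)=1$ forces at most $r$ of the $X_i$ to be nonzero). Your support-counting step, the observation that each $X_i$ lies among the $\gamma_r$ matrices (including the zero matrix), and the overcounting over $r$-subsets are all sound, and you correctly note that overcounting only helps for the stated upper bound.
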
 

Notice that  $\Tr(p_{M}(\uX)) = \sum_{i,j} M_{ij} \langle X_{i}, X_{j} \rangle$ for any  $\uX = (X_{1},\dots,X_{n})$.
Hence, the cone $\DD^n_r$ can be equivalently defined as the set of matrices $M\in \mathcal S^n$ satisfying the (finitely many) linear inequalities:
$ \sum_{i,j=1}^n M_{ij} \langle X_{i}, X_{j} \rangle \ge 0$ for all  $(X_{1},\dots,X_{n}) \in \oDelta (n,r)$. This implies:

\begin{corollary}
The cone $\DD^n_r$ is a polyhedral cone.
\end{corollary}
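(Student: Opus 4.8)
The plan is to read the claim as an immediate consequence of the two facts just established: first, that $\DD^n_r$ can be described using only the finite set $\oDelta(n,r)$ (Lemma~\ref{lemrational2} together with Lemma~\ref{lemcount}), and second, that $\Tr(p_M(\uX))$ is linear in $M$ for each fixed tuple $\uX$. So the proof is essentially a one-line observation, and the work is just to spell out why a finite intersection of closed half-spaces through the origin is polyhedral.

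Concretely, I would proceed as follows. By Lemma~\ref{lemrational2},
\[
\DD^n_r=\{M\in \SSS^n: \Tr(p_M(\uX))\ge 0\ \forall \uX\in \oDelta(n,r)\},
\]
and by Lemma~\ref{lemcount} the index set $\oDelta(n,r)$ is finite. For each fixed $\uX=(X_1,\dots,X_n)\in\oDelta(n,r)$, the identity $\Tr(p_M(\uX))=\sum_{i,j=1}^n M_{ij}\langle X_i,X_j\rangle$ exhibits the map $M\mapsto \Tr(p_M(\uX))$ as a linear functional on $\SSS^n$ (its ``gradient'' being the fixed Gram matrix $(\langle X_i,X_j\rangle)_{i,j}$, which lies in $\cgpsd^n$ by construction). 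Hence each constraint $\Tr(p_M(\uX))\ge 0$ defines a closed half-space in $\SSS^n$ whose bounding hyperplane passes through the origin, and $\DD^n_r$ is the intersection of finitely many such half-spaces. A finite intersection of half-spaces is by definition a polyhedron, and since all the half-spaces are homogeneous the intersection is a cone; therefore $\DD^n_r$ is a polyhedral cone.

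There is really no obstacle here — the statement is a corollary precisely because the three preceding lemmas have already done all the real work (the density/scaling arguments of Lemma~\ref{lemrational}, the support-reduction argument of Lemma~\ref{lemrational2} that collapses the unbounded family $\bDelta(n,r)$ down to $r\times r$ matrices, and the counting bound of Lemma~\ref{lemcount}). If one wanted to be scrupulous, the only point worth a sentence is that $\oDelta(n,r)$ is not just finite but genuinely nonempty-or-empty handled uniformly: if it is empty then $\DD^n_r=\SSS^n$, which is trivially polyhedral; otherwise the argument above applies verbatim. I would also remark in passing, for later use in Section~\ref{sec:lp-bounds}, that the dual cone $\CC^n_r=(\DD^n_r)^*$ is then automatically polyhedral as well, being the conic hull of the finitely many Gram matrices $(\langle X_i,X_j\rangle)_{i,j}$ with $\uX\in\oDelta(n,r)$.
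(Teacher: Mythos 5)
Your argument is exactly the paper's: invoke Lemma~\ref{lemrational2} to replace $\bDelta(n,r)$ by the finite set $\oDelta(n,r)$ (finiteness from Lemma~\ref{lemcount}), observe that $\Tr(p_M(\uX))=\sum_{i,j}M_{ij}\langle X_i,X_j\rangle$ is linear in $M$, and conclude that $\DD^n_r$ is a finite intersection of homogeneous closed half-spaces. The extra remarks about the empty case and the dual cone $\CC^n_r$ are correct but not needed.
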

As $\oDelta(n,r)\subseteq \oDelta(n,r+1)$, the sets $\DD^n_r$ form a hierarchy of outer approximations for   $\cgpsd^{n*}$:
\beqn\cgpsd^{n*}\subseteq \DD^n_{r+1}\subseteq \DD^n_r \subseteq \cdots \subseteq \DD^n_1.\eeqn
Hence, $\cgpsd^{n*}\subseteq \bigcap_{r\ge 1} \DD^n_r$. In fact, as a direct application of the equivalence of $(i)$ and $(v)$ in Lemma \ref{lemrational}, equality holds.

\begin{theorem}\label{thm0}
$\cgpsd^{n*}=\bigcap_{r\ge 1} \DD^n_r$.
\end{theorem}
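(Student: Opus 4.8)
The plan is to prove the two inclusions $\cgpsd^{n*}\subseteq \bigcap_{r\ge 1}\DD^n_r$ and $\bigcap_{r\ge 1}\DD^n_r\subseteq \cgpsd^{n*}$ separately, noting that the first is already essentially recorded above. Indeed, since $\oDelta(n,r)\subseteq\oDelta(n,r+1)$ we have the nested chain $\DD^n_{r+1}\subseteq\DD^n_r$, and every $\uX\in\oDelta(n,r)$ lies in $\cup_{d\ge1}(\SSS^d_+)^n$, so any $M\in\cgpsd^{n*}$ satisfies $\Tr(p_M(\uX))\ge0$ for all such $\uX$; hence $M\in\DD^n_r$ for every $r$, giving $\cgpsd^{n*}\subseteq\bigcap_{r\ge1}\DD^n_r$.

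For the reverse inclusion, suppose $M\in\bigcap_{r\ge1}\DD^n_r$. By Lemma~\ref{lemrational2}, for each $r$ this means $\Tr(p_M(\uX))\ge0$ for all $\uX\in\oDelta(n,r)$, and since $\oDelta(n,r)\subseteq\bDelta(n,r)$ we also get $\Tr(p_M(\uX))\ge0$ for all $\uX\in\bigcup_{r\ge1}\bDelta(n,r)$, i.e.\ for all $n$-tuples of rational positive semidefinite matrices (of any size) with $\sum_i\Tr(X_i)=1$. This is precisely condition $(v)$ in Lemma~\ref{lemrational}: every rational $\uX\in\bDelta_n$ has entries with some common denominator $r$, hence lies in $\bDelta(n,r)$ for that $r$. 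Therefore $M$ satisfies $(v)$, and by the equivalence $(v)\iff(i)$ of Lemma~\ref{lemrational} we conclude $M\in\cgpsd^{n*}$. Combining the two inclusions yields $\cgpsd^{n*}=\bigcap_{r\ge1}\DD^n_r$.

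The only real content here is the observation that $\bigcup_{r\ge1}\bDelta(n,r)$ is exactly the set of rational tuples in $\bDelta_n$, which is immediate from the definition of denominator, plus the continuity/density arguments already packaged inside Lemma~\ref{lemrational}; so no step should present a genuine obstacle. If one wanted to be careful, the one point to double-check is that passing from $\oDelta(n,r)$ to $\bDelta(n,r)$ loses nothing — but this is precisely the statement of Lemma~\ref{lemrational2}, so it may be invoked directly. Thus the proof is a short two-line argument chaining Lemma~\ref{lemrational2} and the equivalence $(i)\iff(v)$ of Lemma~\ref{lemrational}.
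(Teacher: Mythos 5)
Your proof is correct and follows essentially the same route as the paper's (one-line) proof, which also reduces the statement to the equivalence of $(i)$ and $(v)$ in Lemma~\ref{lemrational}. One small wrinkle worth flagging: in the middle you justify ``$\Tr(p_M(\uX))\ge0$ for all $\uX\in\bigcup_{r\ge1}\bDelta(n,r)$'' by writing ``since $\oDelta(n,r)\subseteq\bDelta(n,r)$.'' As worded this is backwards — a subset inclusion lets you pass nonnegativity from the \emph{larger} domain to the \emph{smaller}, not the other way around. The conclusion you want is nonetheless immediate, but from a different source: $M\in\DD^n_r$ means, directly by the \emph{definition} of $\DD^n_r$, that $\Tr(p_M(\uX))\ge0$ for all $\uX\in\bDelta(n,r)$; taking the union over $r$ then gives nonnegativity over all rational tuples in $\bDelta_n$, which is exactly condition $(v)$. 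The detour through $\oDelta(n,r)$ and Lemma~\ref{lemrational2} is unnecessary here (that lemma is needed only to show $\DD^n_r$ is polyhedral, not for Theorem~\ref{thm0}), and it is what introduced the reversed justification. With that step corrected or simply deleted, the argument is the paper's.
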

We will also use the following property of the cones $\DD^n_r$.

\begin{lemma}\label{lemconv}
Consider a sequence of matrices $(M_r)_{r\ge 1}$ in $\SSS^n$  converging to a matrix $M\in \SSS^n$.
If $M_r\in \DD^n_r$ for all $r$, then $M\in \cgpsd^{n*}$.
\end{lemma}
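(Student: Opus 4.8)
The plan is to reduce the statement to the rational characterization of $\cgpsd^{n*}$ provided by Lemma~\ref{lemrational} (equivalence of $(i)$ and $(v)$), and then to use that, for each fixed tuple $\uX$, the evaluation $M\mapsto \Tr(p_M(\uX))$ is a linear, hence continuous, function of $M$. So the proof will be a pointwise limiting argument, one rational tuple at a time, rather than an attempt to pass membership ``$M_r\in\DD^n_r$'' directly to the limit.

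First I would fix an arbitrary $\uX=(X_1,\ldots,X_n)\in\bDelta_n$ all of whose entries are rational, and let $r_0$ be an integer at least as large as every denominator occurring among the entries of $X_1,\ldots,X_n$. Then $\uX\in\bDelta(n,r_0)$, and by the obvious nestedness $\bDelta(n,r_0)\subseteq\bDelta(n,r)$ for all $r\ge r_0$. Since $M_r\in\DD^n_r$ by hypothesis, this gives $\Tr(p_{M_r}(\uX))\ge 0$ for every $r\ge r_0$.

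Next I would recall that $\Tr(p_{M_r}(\uX))=\sum_{i,j=1}^n (M_r)_{ij}\langle X_i,X_j\rangle$, which is a fixed linear functional evaluated at $M_r$; hence it is continuous in $M_r$. Letting $r\to\infty$ and using $M_r\to M$, we obtain $\Tr(p_M(\uX))=\lim_{r\to\infty}\Tr(p_{M_r}(\uX))\ge 0$. Since $\uX$ was an arbitrary rational tuple in $\bDelta_n$, condition $(v)$ of Lemma~\ref{lemrational} holds for $M$, and therefore $M\in\cgpsd^{n*}$ (equivalently $M\in\bigcap_{r\ge 1}\DD^n_r$ by Theorem~\ref{thm0}).

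I do not expect a genuine obstacle here. The only subtlety worth flagging explicitly is that one cannot argue $M\in\DD^n_r$ for any single fixed $r$ merely from the memberships $M_r\in\DD^n_r$, because the constraint set defining $\DD^n_r$ shrinks with $r$; this is precisely why the argument is organized as ``fix a rational $\uX$, note it lies in $\bDelta(n,r)$ for all large $r$, then take the limit of $\Tr(p_{M_r}(\uX))$,'' which sidesteps the issue cleanly.
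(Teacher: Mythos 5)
Your proof is correct and matches the paper's argument essentially line for line: fix a rational tuple $\uX\in\bDelta_n$, observe that $\uX\in\bDelta(n,r)$ for all $r$ at least the common denominator, use $M_r\in\DD^n_r$ to get $\Tr(p_{M_r}(\uX))\ge 0$, pass to the limit by linearity of $M\mapsto\Tr(p_M(\uX))$, and invoke the equivalence $(i)\Leftrightarrow(v)$ of Lemma~\ref{lemrational}. Your explicit remark on why one cannot simply pass membership $M_r\in\DD^n_r$ to the limit is a worthwhile clarification, and your use of $\bDelta(n,r)$ rather than $\oDelta(n,r)$ is actually the cleaner choice, since $\DD^n_r$ is defined via $\bDelta(n,r)$.
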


\begin{proof}
In view of Lemma \ref{lemrational}, it suffices to show that $\Tr(p_M(\uX))\ge 0$ whenever $\uX\in \oDelta_n$ is rational valued. 
Fix a rational valued $\uX\in \oDelta_n$ and say that all its entries 
have denominator at most $t$. 
Then, $\uX \in \oDelta(n,r)$ for all $r\ge t$. 
Hence $\Tr(p_{M_r}(\uX))\ge 0$ for all $M_r$ with $r\ge t$. When $r$ tends to infinity, $\Tr(p_{M_r}(\uX))$ tends to $\Tr(p_M(\uX))$ and thus we obtain 
 that $\Tr(p_M(\uX))\ge 0$.
\end{proof}

We now turn to the description of the dual cone $\CC^n_r:= \DD^{n*}_r$. 
As a direct application of Lemma~\ref{lemrational2}, we can conclude that $\CC^n_r$ 
 is the set of conic combinations of matrices which have a Gram representation by matrices in $\oDelta(n,r)$; that is,
\beq\label{O*_r}
\CC^n_r=\text{\rm cone}\{A\in \SSS^n: A=(\langle X_i,X_j\rangle)_{i,j=1}^n \ \text{ for some } (X_1,\ldots,X_n)\in \oDelta(n,r)\}.
\eeq
By construction, the cones $\CC^n_r$ are polyhedral and they form a hierarchy of inner approximations of $\cgpsd^n$:
$\CC^n_1 \subseteq \dots \subseteq \CC^n_r \subseteq \CC^n_{r+1} \subseteq \cgpsd^n$, with strict inclusion.

\begin{lemma}
For any $n \ge 2$ and $r\ge 1$,  we have strict inclusions: $ C_{r}^{n} \subsetneq C_{r+1}^{n} \subsetneq \cgpsd^{n}$.
\end{lemma}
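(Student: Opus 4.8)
The plan is to establish the two strict inclusions separately. The inclusion $\CC^n_{r+1}\subsetneq\cgpsd^n$ is soft, while $\CC^n_r\subsetneq\CC^n_{r+1}$ needs an explicit separating matrix, which I would construct and analyse directly for general $n\ge 2$ by supporting it on a $2\times 2$ block.

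\emph{The inclusion $\CC^n_{r+1}\subsetneq\cgpsd^n$.} Here I would use that $\CC^n_{r+1}$ is polyhedral (as already observed) whereas $\cgpsd^n$ is not polyhedral for $n\ge 2$. To see the latter, note that $\cgpsd^2$ sits inside $\cgpsd^n$ as an exposed face: the matrix $M=\mathrm{diag}(0,0,1,\dots,1)$ is positive semidefinite, hence lies in $\cgpsd^{n*}$, and for $A\in\cgpsd^n$ we have $\langle A,M\rangle=\sum_{i\ge 3}A_{ii}\ge 0$ with equality exactly when $A_{ii}=0$ for all $i\ge 3$; since a zero diagonal entry of a completely positive semidefinite matrix forces the corresponding row and column to vanish, this face is linearly isomorphic to $\cgpsd^2=\cp^2=\SSS^2_+\cap\R^{2\times 2}_+$. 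The latter cone has the rank-one matrices $vv^\sfT$, $v\in\R^2_+$, as its extreme rays --- a continuum of them --- so it is not polyhedral; as faces of polyhedral cones are polyhedral, $\cgpsd^n$ is not polyhedral either, and therefore $\CC^n_{r+1}\neq\cgpsd^n$.

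\emph{The inclusion $\CC^n_r\subsetneq\CC^n_{r+1}$.} I would take as witness the rank-one matrix $A_r\in\SSS^n$ supported on the top-left $2\times 2$ block and equal there to $vv^\sfT$ with $v=(1,r)$. Membership $A_r\in\CC^n_{r+1}$ is by direct construction: with $X_1=\mathrm{diag}(\tfrac1{r+1},0,\dots,0)$ and $X_2=\mathrm{diag}(\tfrac r{r+1},0,\dots,0)$ in $\SSS^{r+1}_+$ and $X_3=\dots=X_n=0$, the tuple $(X_1,\dots,X_n)$ lies in $\oDelta(n,r+1)$ and its Gram matrix equals $\tfrac1{(r+1)^2}A_r$, so $A_r$ lies in the cone $\CC^n_{r+1}$. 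For non-membership $A_r\notin\CC^n_r$, suppose $A_r=\sum_k\mu_k\big(\langle X^k_i,X^k_j\rangle\big)_{i,j=1}^n$ with $\mu_k\ge 0$ and $(X^k_1,\dots,X^k_n)\in\oDelta(n,r)$. Looking at the $(i,i)$ entry for $i\ge 3$ forces $X^k_i=0$ for all $i\ge 3$ whenever $\mu_k>0$, so each contributing summand is a positive semidefinite matrix supported on the top-left $2\times 2$ block; restricting to that block, $vv^\sfT$ is a conic combination of positive semidefinite $2\times 2$ matrices, and since $vv^\sfT$ is rank one, for some contributing index $k$ we get $\big(\langle X^k_i,X^k_j\rangle\big)_{i,j=1}^2=c_k vv^\sfT$ with $c_k>0$. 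The equality case of Cauchy--Schwarz then gives $X^k_1=\tfrac1r X^k_2$, and together with $\Tr(X^k_1)+\Tr(X^k_2)=1$ this yields $\Tr(X^k_1)=\tfrac1{r+1}$. But $X^k_1$ is a nonzero positive semidefinite matrix whose entries are rationals with denominator at most $r$, hence it has a positive diagonal entry, which is then $\ge\tfrac1r>\tfrac1{r+1}$ --- a contradiction.

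The step I expect to require the most care is the last one: pinning down precisely which rays of $\cgpsd^2$ are reachable at level $r$. It hinges on three facts working together --- that the generators of $\CC^n_r$ are positive semidefinite, so a rank-one element of the cone decomposes into nonnegative multiples of itself; that proportionality of a $2\times 2$ Gram matrix to $vv^\sfT$ rigidly determines the ratio of the two Gram matrices; and that the normalization $\sum_i\Tr(X_i)=1$ built into $\oDelta(n,r)$, combined with the denominator bound $r$, rules out the trace value $\tfrac1{r+1}$. It is worth noting that the normalization constraint is essential, and that the choice $v=(1,r)$ --- rather than some other slope just beyond the reach of level $r$ --- is what makes the trace bookkeeping come out cleanly for every $r\ge 1$.
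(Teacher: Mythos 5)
Your proof is correct, and it diverges from the paper's in both halves while keeping the same underlying theme (rank-one witnesses on a $2\times 2$ block).

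For $\CC^n_{r+1}\subsetneq\cgpsd^n$, the paper simply picks $v=(1,a)$ with $a$ irrational, so $vv^\sfT\in\cgpsd^2$ cannot lie in any $\CC^2_r$; you instead give a structural argument via non-polyhedrality of the face $\cgpsd^2=\SSS^2_+\cap\R^{2\times 2}_+$. Both are fine; the paper's is shorter, yours explains \emph{why} no finite $r$ could ever suffice. For $\CC^n_r\subsetneq\CC^n_{r+1}$, the paper takes the witness slope $a=1/(r+1)$ and asserts membership in $\CC^2_{r+1}$, whereas you take $v=(1,r)$ (equivalently, up to scaling and permuting the two indices, slope $a=1/r$). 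These are \emph{not} the same witness, and in fact the paper's choice as written does not work: for $v=(1,1/(r+1))$, the Cauchy--Schwarz rigidity you invoke forces $X_2=\tfrac1{r+1}X_1$ with $\Tr(X_1)+\Tr(X_2)=1$, hence $\Tr(X_2)=1/(r+2)$; but a nonzero PSD matrix with rational entries of denominator at most $r+1$ has a positive diagonal entry $\ge 1/(r+1)>1/(r+2)$, so such a Gram tuple cannot exist in $\oDelta(2,r+1)$ either, i.e.\ this $vv^\sfT$ is not even in $\CC^2_{r+1}$. (Concretely, for $r=1$ and $a=1/2$ one would need $\Tr(X_1)=2/3$ from entries with denominator at most $2$, which is impossible.) Your choice $v=(1,r)$ gives $\Tr(X_1)=1/(r+1)$, which \emph{is} achievable at level $r+1$ and \emph{not} at level $r$, so the trace bookkeeping closes cleanly exactly as you argue. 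So your proof is not only correct but also repairs a small off-by-one slip in the paper's choice of witness; the intended choice there should be $a=1/r$ (or, equivalently, $a=r$).
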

\begin{proof}
The only fact which needs a proof is that each inclusion is strict.
It suffices to show this for $n=2$, since one can extend a matrix $A$ in $\CC^2_r$ to a matrix in $\CC^n_r$ by adding all zero coordinates, and the same for $\cgpsd$.
For this we consider a rank 1 matrix $A=vv^T$, where $v=(1\ a)^T$ and $a$ is a nonnegative scalar. Then $A\in \cgpsd^2$.
If we choose $a$ to be an irrational number then $A$ cannot belong to any cone $\CC^2_r$
and, if we choose $a=1/(r+1)$, then $A$ belongs to $\CC^2_{r+1}$ but not to $\CC^2_r$.
\end{proof}
We now show  that the union of the cones $\CC^n_r$  covers the interior of the cone $\cgpsd^n$.

\begin{theorem} \label{thminterior}
We have the inclusions:
\beqn\text{\rm int}(\cgpsd^n) \subseteq \bigcup_{r\ge 1} \CC^n_r \subseteq \cgpsd^n.\eeqn 
\end{theorem}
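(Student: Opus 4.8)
The inclusion $\bigcup_{r\ge 1}\CC^n_r\subseteq\cgpsd^n$ is immediate: by the description \eqref{O*_r}, every matrix in $\CC^n_r$ is a conic combination of matrices admitting a Gram representation by positive semidefinite matrices, hence lies in $\cgpsd^n$ (which is a cone). So the entire content is the first inclusion $\text{\rm int}(\cgpsd^n)\subseteq\bigcup_{r\ge 1}\CC^n_r$.

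The plan is to take $A\in\text{\rm int}(\cgpsd^n)$ and produce, for $r$ large enough, a Gram representation of $A$ by $r\times r$ rational positive semidefinite matrices with denominator at most $r$ whose traces sum to a fixed constant (which we may then rescale to $1$, since $\CC^n_r$ is a cone). First I would fix some Gram representation $A=(\langle Y_i,Y_j\rangle)_{i,j}$ with $Y_1,\dots,Y_n\in\cpsd^d$. The issue is that rounding the $Y_i$ to nearby rational psd matrices changes the Gram matrix slightly, so one does not land back on $A$ exactly; this is where interiority must be used. The standard device: since $A\in\text{\rm int}(\cgpsd^n)$, there is a ball around $A$ inside $\cgpsd^n$, so for small $\varepsilon>0$ both $A$ and a suitable "corrected" matrix $A'$ close to $A$ lie in $\cgpsd^n$ and we can write $A$ as a small perturbation of something rational-representable. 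Concretely I would argue as follows. Perturb $A$ to $A-\delta B$ for a fixed matrix $B\in\text{\rm int}(\cgpsd^n)$ (e.g.\ $B=(\langle F_i,F_j\rangle)$ for generic full-rank $F_i$, or simply use that $A-\delta \tilde A\in\cgpsd^n$ for some convenient $\tilde A$ and small $\delta$); pick a Gram representation of the perturbed matrix; round its entries to rationals with a common denominator $q$, controlling the Gram-matrix error by $O(1/q)$ times a constant depending on the chosen representation; then absorb the rounding error into the remaining $\delta B$ term, which itself has an explicit rational psd Gram representation. The sum of traces is bounded by a constant independent of $q$ (it depends only on $A$ and the fixed representation), so after taking $q$ large — in particular $q\ge$ (the number of blocks needed, the size $d$, etc.) — we get $A\in\CC^n_q$ once we also ensure the matrix size is at most $q$ and the denominator is at most $q$. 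One extra point: to fit the definition of $\oDelta(n,q)$ we need the block size and the denominator simultaneously bounded by the same integer $r$; this is harmless since we are free to take $r=\max\{q,d,\dots\}$ and then enlarge each $X_i$ to an $r\times r$ matrix by padding with zeros (which does not change traces or inner products).

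The key steps, in order: (1) dispatch the trivial inclusion via \eqref{O*_r}; (2) fix $A\in\text{\rm int}(\cgpsd^n)$ and choose $\delta>0$ and a "reference" matrix with explicit rational psd Gram representation so that $A-\delta(\text{reference})\in\cgpsd^n$; (3) pick a Gram representation $(Z_1,\dots,Z_n)$ of $A-\delta(\text{reference})$, say in $\cpsd^d$; (4) round the $Z_i$ entrywise to $Z_i'$ with common denominator $q$, keeping $Z_i'\succeq 0$ (possible by adding $O(1/q)\,I$ if needed) and controlling $\|(\langle Z_i',Z_j'\rangle)-(\langle Z_i,Z_j\rangle)\|=O(1/q)$; (5) observe the residual $A-(\langle Z_i',Z_j'\rangle)$ is, for $q$ large, still in $\cgpsd^n$ because it is $\delta(\text{reference})$ plus something of norm $O(1/q)$ inside the $\delta$-ball around $\delta(\text{reference})$, and moreover it has an explicit rational psd Gram representation with bounded denominator and size (diagonal/rank-one pieces do the job); (6) combine the two rational representations into a single one for $A$, pad all blocks up to a common size $r$, and check that $r$ can be chosen to simultaneously bound the size, the denominator, and (after rescaling the sum of traces to $1$) conclude $A\in\CC^n_r$.

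The main obstacle is step (5) — arranging that the residual $A-(\langle Z_i',Z_j'\rangle)$ is itself manifestly completely positive semidefinite with a \emph{rational, bounded-denominator, bounded-size} Gram representation, while its norm is only controlled to $O(1/q)$. The clean way around this is to not try to represent an arbitrary small residual, but to set things up so the residual is (exactly) a fixed convenient matrix with a known rational psd representation plus an error that can be pushed into the same family; choosing the reference matrix to be something like a multiple of $I$ or of $vv^T$ with rational $v$ makes its rational representation transparent and makes "staying psd after perturbation" easy. Getting the bookkeeping of the three bounds (matrix size $\le r$, denominator $\le r$, trace sum $=1$ after scaling) to line up for one common $r$ is routine once the representations are in hand.
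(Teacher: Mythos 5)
The trivial inclusion $\bigcup_r\CC^n_r\subseteq\cgpsd^n$ is handled correctly. For the main inclusion your constructive ``round-and-absorb-the-error'' strategy is genuinely different from the paper's argument, but it has a real gap precisely at the step you flag as the main obstacle. After rounding you are left with a residual $R = A - (\langle Z_i',Z_j'\rangle)$ equal to $\delta\cdot(\text{reference})$ plus an error $E$ with $\|E\|=O(1/q)$. Interiority of the reference gives $R\in\cgpsd^n$ (even $R\in\text{\rm int}(\cgpsd^n)$), but that is the same status $A$ started with, so nothing has been gained: the assertion that $R$ ``has an explicit rational psd Gram representation with bounded denominator and size'' is not justified, and proving it for an arbitrary small perturbation of the reference is a local version of the very theorem being proved. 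Your proposed fix --- arrange for the residual to be \emph{exactly} a fixed convenient matrix --- cannot work as stated, because $E$ is determined by the (real, uncontrolled) entries of $A$ and of the Gram factors $Z_i$, so the residual genuinely varies with $q$ and with the rounding; you would need a mechanism to push $E$ into the rational-Gram family, and none is given. (Iterating the construction on $R$ only reproduces the same problem at a smaller scale.)

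The paper avoids this entirely with a non-constructive separation argument: suppose $A\in\text{\rm int}(\cgpsd^n)$ but $A\notin\bigcup_r\CC^n_r$; since each $\CC^n_r$ is a closed (polyhedral) convex cone, for every $r$ there is $M_r\in\DD^n_r$ with $\|M_r\|=1$ and $\langle M_r,A\rangle<0$; compactness of the unit sphere gives a convergent subsequence $M_{r_i}\to M$; Lemma \ref{lemconv} yields $M\in\cgpsd^{n*}$, so $\langle A,M\rangle\ge 0$, while the limit of $\langle A,M_{r_i}\rangle<0$ gives $\langle A,M\rangle\le 0$; hence $\langle A,M\rangle=0$ with $M\ne 0$ in $\cgpsd^{n*}$, contradicting interiority. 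This sidesteps any need to produce an explicit rational factorization, which is the point at which your route stalls.
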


\begin{proof}
We only need to show the first inclusion.
For this, consider a matrix $A$ in the interior of the cone $\cgpsd^n$ and assume that $A$ does not belong to 
$\bigcup_{r\ge 1} \CC^n_r$.
Then, for each $r\ge 1$, there exists a hyperplane separating $A$ from the (closed convex) cone $\CC^n_r$. That is, there exists a matrix $M_r\in\DD^n_r$ such that $\langle M_r,A\rangle <0$ and $\|M_r\|=1$.
Since all matrices $M_r$ lie in a compact set, the sequence $(M_r)_{r}$ admits a converging subsequence $(M_{r_i})_{i\ge 1}$ which converges to a matrix $M\in \SSS^n$.
By Lemma \ref{lemconv}, we know that the matrix $M$ belongs to the cone $\cgpsd^{n*}$ and thus
$\langle A,M\rangle \ge 0$. On the other hand, as $\langle A,M_{r_i}\rangle <0$ for all $i$, by taking the limit as $i$ tends to infinity, we get that $\langle A,M\rangle \le 0$. Hence we obtain $\langle A,M\rangle=0$, which contradicts the assumption that $A$ lies in the interior of $\cgpsd^n$.
\end{proof}

  It is easy to give an explicit description of the cones $\CC^{n}_r$ for small $r$. 
  For example, $\CC^{n}_1$ is the set of $n \times n$ diagonal  nonnegative matrices and $\CC^{n}_2$ is the convex hull of the matrices $E_{ii}$ and $E_{ii}+E_{ij}+E_{jj}$ (for $i,j\in [n]$), where $E_{ij}$ denote the elementary matrices in $\SSS^n$.

\section{LP lower bounds to the quantum chromatic number}\label{sec:lp-bounds}

In this section we use the polyhedral hierarchy $\CC^{n}_r$ ($r\ge 1$) to show that the parameter $\tchiq(G)$ in (\ref{def:tchiq}) can be written as a linear program.
We recall the definition of $\tchiq(G)$:
\beq \label{eq:tchiq}
\tchiq(G) = \min t \in \mathbb{N} \text{ s.t. } \exists A \in \cl(\cgpsd^{nt}), \, A \in \mathcal{A}^t \text{ and }  L_{G,t}(A) = 0,
\eeq
where the affine space $\mathcal{A}^t$ is defined in (\ref{def:At}) and the map $L_{G,t}$ in (\ref{def:Lt}).
A first natural approach for building a linear relaxation of $\tchiq(G)$ is to replace the cone $\cl(\cgpsd^{nt})$ in the definition of $\tchiq(G)$ by the subcone $\CC^n_r$, leading to the parameter
\beqn\ell_r(G)=\min t \in \N \text{ s.t. } \exists A\in \CC^{nt}_r, \, A \in \mathcal{A}^t \text{ and }  L_{G,t}(A) = 0.\eeqn
As $\CC^{nt}_r\subseteq \cgpsd^{nt}$, we have $\tchiq(G)\le \chi_q(G)\le \ell_r(G)$. 
Moreover the sequence $(\ell_r(G))_r$ is monotone nonincreasing and thus has a limit (it becomes stationary). However it is not clear whether the limit is equal to $\chi_q(G)$.
If one could claim that for $t=\chi_q(G)$ there is a feasible matrix $A$ for the program (\ref{eq:tchiq}) which lies in the interior of $\cgpsd^{nt}$ then,
by Theorem \ref{thminterior},  $A$ would belong to some cone $\CC^{nt}_r$ which would imply equality $\chi_q(G)=\ell_r(G)$.
However, this idea cannot work because, as observed in Lemma \ref{lemborder}, any matrix feasible for (\ref{eq:tchiq}) lies in the boundary of $\cgpsd^{nt}$.
To go around this difficulty, our strategy is to relax the affine constraints in (\ref{eq:tchiq}) so as to allow feasible solutions in the interior of $\cgpsd^{nt}$.

More precisely, given an integer $k\ge 1$, we consider the affine 
space $\mathcal{A}^t_k$  defined by the equations: $\mid \sum_{i,j} A_{ui,vj} - 1 \mid \ \le \frac{1}{k}$ for all $ u,v \in V(G)$. We define the parameter:
\beq \label{eq:lambda_k}
\lambda_k(G) = \min t \text{ s.t. } \exists A \in \cl(\cgpsd^{nt}), \, A \in \mathcal{A}^t_k \text{ and } L_{G,t}(A) \le \frac{1}{k}.
\eeq
In a first step we show that $\lambda_k(G)=\tchiq(G)$ for $k$ large enough.

\begin{lemma}\label{lem:lambda_k}
For any graph $G$, there exists $k_0 \in \N$ such that $\tchiq(G) = \lambda_{k}(G)$ for all $k \ge k_0$. 
\end{lemma}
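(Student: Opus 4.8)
The plan is to prove two inequalities: $\tchiq(G) \le \lambda_k(G)$ for all $k$ large enough, and $\lambda_k(G) \le \tchiq(G)$ for all $k$, with the second being straightforward monotonicity and the first requiring a compactness argument. Let me set $t_0 = \tchiq(G)$ throughout.

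\medskip

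\noindent\textbf{Easy direction ($\lambda_k(G) \le \tchiq(G)$ for all $k \ge 1$).} If $A$ is feasible for the program (\ref{eq:tchiq}) defining $\tchiq(G)$ with value $t = t_0$, then $A \in \cl(\cgpsd^{n t_0})$, $A \in \mathcal A^{t_0}$ (so $\sum_{i,j} A_{ui,vj} = 1$ for all $u,v$, hence trivially $|\sum_{i,j}A_{ui,vj} - 1| \le 1/k$), and $L_{G,t_0}(A) = 0 \le 1/k$. Thus $A$ is feasible for the program (\ref{eq:lambda_k}) defining $\lambda_k(G)$ with the same value $t_0$, giving $\lambda_k(G) \le t_0 = \tchiq(G)$.

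\medskip

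\noindent\textbf{Hard direction ($\tchiq(G) \le \lambda_k(G)$ for $k$ large).} First note that $(\lambda_k(G))_k$ is monotone nondecreasing in $k$ (the feasible region shrinks as $k$ grows) and bounded above by $\tchiq(G)$, and all values are positive integers bounded by $|V(G)|$, so the sequence stabilizes: there is $k_0$ and a value $\lambda$ with $\lambda_k(G) = \lambda$ for all $k \ge k_0$. It remains to show $\tchiq(G) \le \lambda$. For each $k \ge k_0$, pick $A_k \in \cl(\cgpsd^{n\lambda})$ with $A_k \in \mathcal A^\lambda_k$ and $L_{G,\lambda}(A_k) \le 1/k$. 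The key point is that these $A_k$ lie in a bounded set: the constraints $|\sum_{i,j} A_{ui,vj} - 1| \le 1/k \le 1$ bound $\sum_{i,j}(A_k)_{ui,vj}$ for each pair $u,v$; since every matrix in $\cgpsd$ (hence in its closure) is entrywise nonnegative and positive semidefinite, taking $u = v$ bounds the sum of a principal block's entries, in particular all diagonal entries $(A_k)_{ui,ui}$ are bounded, and then positive semidefiniteness bounds all entries $|(A_k)_{ui,vj}| \le \sqrt{(A_k)_{ui,ui}(A_k)_{vj,vj}}$. So $(A_k)_{k \ge k_0}$ lies in a compact subset of $\cl(\cgpsd^{n\lambda})$ (the latter being closed). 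Extract a convergent subsequence $A_{k_i} \to A^* \in \cl(\cgpsd^{n\lambda})$. Passing to the limit in the constraints: $|\sum_{i,j} A^*_{ui,vj} - 1| \le \lim 1/k_i = 0$, so $A^* \in \mathcal A^\lambda$; and $L_{G,\lambda}(A^*) = \lim L_{G,\lambda}(A_{k_i}) \le \lim 1/k_i = 0$, while $L_{G,\lambda}(A^*) \ge 0$ since $A^*$ is entrywise nonnegative (all terms in (\ref{def:Lt}) are nonnegative), hence $L_{G,\lambda}(A^*) = 0$. Thus $A^*$ is feasible for (\ref{eq:tchiq}) with value $\lambda$, giving $\tchiq(G) \le \lambda = \lambda_k(G)$ for all $k \ge k_0$. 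Combined with the easy direction, $\tchiq(G) = \lambda_k(G)$ for all $k \ge k_0$.

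\medskip

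\noindent The main obstacle is the boundedness/compactness step: one must verify carefully that the relaxed affine constraints, together with the structural properties of $\cl(\cgpsd)$ (entrywise nonnegativity and positive semidefiniteness, which pass to the closure), force the feasible matrices $A_k$ into a compact set so that a limit point exists inside the closed cone $\cl(\cgpsd^{n\lambda})$. Everything else is routine limiting of linear (in)equalities and the observation that nonnegativity of entries makes $L_{G,\lambda} \ge 0$ automatic, so $L_{G,\lambda}(A^*) \le 0$ actually yields $L_{G,\lambda}(A^*) = 0$.
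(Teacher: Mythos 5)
Your proof is correct and follows essentially the same approach as the paper: the easy inequality $\lambda_k(G)\le\tchiq(G)$, the stabilization of the nondecreasing integer sequence $(\lambda_k(G))_k$, extraction of a convergent subsequence of feasible matrices $A_k$, and passage to the limit. The only difference is that you spell out more carefully why the $A_k$ lie in a compact set (using nonnegativity and positive semidefiniteness of matrices in $\cl(\cgpsd)$ together with the relaxed affine constraints) and why $L_{G,\lambda}(A^*)=0$ rather than merely $\le 0$; the paper's proof states these steps more tersely but the substance is identical.
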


\begin{proof}
Notice that $\lambda_k(G) \le \tchiq(G)$ holds for every $k \in \N$. Indeed, any matrix solution for $\tchiq(G)$ is also a solution for $\lambda_{k}(G)$.
Moreover, as the sequence $\left(\lambda_k(G)\right)_{k\in \N}$ is a monotone nondecreasing sequence of natural numbers upper bounded by $\tchiq(G)$, there exists a $k_0$ such that $\lambda_k(G) = \lambda_{k_0}(G)$ for all $k \ge k_0$. Let $t = \lambda_{k_0}(G)$.
For all $k \ge k_0$ there exists a matrix $A_k \in \cl(\cgpsd^{nt})$ with $A_k \in \mathcal{A}^t_k$ and $L_{G,t}(A_k)\le \frac{1}{k}$.
Consider the sequence $(A_{k})_{k \ge k_{0}}$, which is bounded as all $A_{k}$ lie in $\mathcal{A}^{t}_{k_{0}}$. Therefore, the sequence has a converging subsequence to, say, $A$ where $A \in \cl(\cgpsd^{nt})$, $A \in \mathcal{A}^{t}$ and $L_{G,t}(A) = 0$.
Hence, $A$ is a feasible solution for $\tchiq(G)$ and $\tchiq(G) \le t = \lambda_{k_{0}}(G) = \lambda_{k}(G)$ for all $k \ge k_{0}$.
\end{proof}

In a second step we show that the new parameter $\lambda_k(G)$ can be computed by a linear program. For this we replace in the definition of $\lambda_k(G)$ the cone $\cl(\cgpsd^{nt})$ by  the polyhedral cone $\CC^{nt}_r$, leading to the following parameter:
\beq \label{eq:lambda_k^r}
\lambda_{k}^{r}(G) = \min t \text{ s.t. } \exists A \in \CC^{nt}_{r}, \, A \in \mathcal{A}^t_k \text{ and } L_{G,t}(A) \le \frac{1}{k}.
\eeq
Notice that this parameter $\lambda_k^r(G)$  can be computed through a linear program since $\CC^{nt}_r$ is polyhedral.
We will show that for any graph $G$ there exist integers $k_{0}$ and  $r_{0}$ such that $\tchiq(G) = \lambda_{k_{0}}^{r_{0}}(G)$. We emphasize that this is an existential result: we do not know for which integers $k_{0}$ and $r_{0}$ such a convergence happens.
One of the ingredients to prove the result is to show the existence of a matrix in the interior of $\cgpsd$ satisfying certain constraints. To this end, we will use the matrix $Z = I + J \in \SSS^{nt}$ where $I$ and $J$ are, respectively, the identity and the all-ones matrices.

\begin{lemma}\label{lemZIJ}
The matrix $Z=I+J \in \SSS^{nt}$ lies in the interior of $\cgpsd$. 
Moreover, we have that $\sum_{i,j \in [t]} Z_{ui,uj} = t^{2}+t$ for all $u \in V(G)$, $\sum_{i,j \in [t]} Z_{ui,vj} = t^{2}$ for all $u \neq v \in V(G)$ and $L_{G,t}(Z) = nt^{2} - nt + mt$, where $m$ is the number of edges of the graph $G$.
\end{lemma}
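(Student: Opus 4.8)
The statement has two independent parts: (a) showing $Z = I + J$ lies in the interior of $\cgpsd^{nt}$, and (b) verifying the three arithmetic identities. Part (b) is a routine computation which I would dispatch first. Writing $Z_{ui,vj} = \delta_{(u,i),(v,j)} + 1$, we get $\sum_{i,j\in[t]} Z_{ui,uj} = \sum_{i,j} 1 + \sum_{i,j}\delta_{i,j} = t^2 + t$, while for $u\neq v$ no diagonal terms appear, giving $\sum_{i,j} Z_{ui,vj} = t^2$. For $L_{G,t}(Z)$, recall from (\ref{def:Lt}) that $L_{G,t}(Z) = \sum_{u\in V(G), i\neq j} Z_{ui,uj} + \sum_{uv\in E(G), i} Z_{ui,vi}$; the first sum contributes $n\cdot t(t-1)$ (each off-diagonal entry of $J$ is $1$), and the second contributes $m\cdot t$ (each of the $m$ edges gives $t$ entries equal to $1$, since $u\neq v$ kills the Kronecker delta), so $L_{G,t}(Z) = nt(t-1) + mt = nt^2 - nt + mt$.

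The substantive part is (a). The plan is to use the characterization recalled just before Lemma~\ref{lemborder}: $Z$ lies in the interior of $\cgpsd^{nt}$ if and only if $\langle Z, M\rangle > 0$ for every nonzero $M \in \cgpsd^{nt*}$. So fix a nonzero $M \in \cgpsd^{nt*}$. Since $\cgpsd^{nt*} \subseteq \cop^{nt} \subseteq \SSS^{nt}_+ + (\text{nonnegative matrices})$ — more precisely, as noted in the excerpt, $\SSS^{nt}_+ + (\SSS^{nt}\cap \R^{nt\times nt}_+) \subseteq \cgpsd^{nt*} \subseteq \cop^{nt}$, and every copositive matrix has nonnegative diagonal — I would compute $\langle Z, M\rangle = \langle I, M\rangle + \langle J, M\rangle = \Tr(M) + \sum_{p,q} M_{pq}$. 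Now $\Tr(M) \geq 0$ because $M$ is copositive (diagonal entries $M_{pp} = e_p^\sfT M e_p \geq 0$), and $\sum_{p,q} M_{pq} = \mathbf{1}^\sfT M \mathbf{1} \geq 0$ because $M$ is copositive and $\mathbf{1}\in\R^{nt}_+$. Hence $\langle Z, M\rangle \geq 0$ always; it remains to rule out equality. If $\langle Z,M\rangle = 0$ then both $\Tr(M) = 0$ and $\mathbf{1}^\sfT M\mathbf 1 = 0$. From $\Tr(M)=0$ and $M_{pp}\ge 0$ for all $p$ we get $M_{pp} = 0$ for every $p$. But a copositive matrix with all diagonal entries zero must be entrywise nonnegative: for any $p\neq q$ and $\lambda > 0$, copositivity applied to $e_p + \lambda e_q$ gives $M_{pp} + 2\lambda M_{pq} + \lambda^2 M_{qq} = 2\lambda M_{pq} \geq 0$, so $M_{pq}\geq 0$. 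Thus $M\geq 0$ entrywise with zero diagonal. Then $\mathbf 1^\sfT M\mathbf 1 = \sum_{p\neq q} M_{pq} = 0$ forces $M_{pq} = 0$ for all $p\neq q$ as well, i.e. $M = 0$, contradicting $M\neq 0$. Therefore $\langle Z,M\rangle > 0$ for all nonzero $M\in\cgpsd^{nt*}$, and $Z$ lies in the interior of $\cgpsd^{nt}$.

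The only mild subtlety — and the step I'd flag as the place to be careful — is the appeal to "every copositive matrix has nonnegative diagonal" and "a copositive matrix with zero diagonal is entrywise nonnegative," which I've justified above by evaluating $p_M$ at standard basis vectors and at $e_p+\lambda e_q$; these are elementary facts about $\cop^{nt}$ but should be stated explicitly since the argument hinges on them. One should also double-check that $Z\in\cgpsd^{nt}$ in the first place (so that "interior" is meaningful): $Z = I + J$ is positive semidefinite and, being a sum of a completely positive matrix $J = \mathbf 1\mathbf 1^\sfT$ and $I$ (which is completely positive, hence completely positive semidefinite), lies in $\cp^{nt}\subseteq\cgpsd^{nt}$. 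Everything else is bookkeeping.
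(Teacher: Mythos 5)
Your proof is correct and follows essentially the same route as the paper's: assume $\langle Z,M\rangle = 0$ for some $M\in\cgpsd^{nt*}$, deduce $\Tr(M)=\langle I,M\rangle=0$ and $\mathbf 1^\sfT M\mathbf 1=\langle J,M\rangle=0$ from the nonnegativity of each summand, and then use copositivity (zero diagonal forces entrywise nonnegativity) to conclude $M=0$. One slip worth correcting before it misleads a reader: the chain $\cop^{nt}\subseteq\SSS^{nt}_++(\text{nonnegative matrices})$ that you write in passing is false for $nt\ge 5$; fortunately you never use it, as your argument only relies on $\cgpsd^{nt*}\subseteq\cop^{nt}$, which is correct.
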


\begin{proof}
We only show that $I+J$ lies in the interior of $\cgpsd^{nt}$, the other claims are direct verification. 
Assume that there exists a matrix $M\in \cgpsd^{nt*}$ such that $\langle M,I+J\rangle=0$; we show that $M=0$.
Indeed, as both $I$ and $J$ lie in $\cgpsd^{nt}$ we get that $\Tr(M)=0$ and $\langle J,M\rangle =0$.
Observe that since $M$ is copositive with zero diagonal entries, all its entries must be nonnegative. Combining with $\langle J,X\rangle=0$,
 we deduce that $M$ is identically zero.
\end{proof}

\begin{theorem}\label{theotchiq}
For any  graph $G$ there exist $k_{0}$ and $r_{0} \in \mathbb{N}$ such that $\tchiq(G) = \lambda_{k}^{r}(G)$ for all $k\ge k_{0}$ and all $r \ge r_{0}$.  Moreover $\lambda_{k_{0}}^{r_{0}}(G)$, and thus $\tchiq(G)$, can be computed via a linear program.
\end{theorem}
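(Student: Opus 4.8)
The plan is to derive the identity $\tchiq(G)=\lambda_k^r(G)$ (for suitable $k,r$) by combining three facts already available: Lemma~\ref{lem:lambda_k}, which replaces $\tchiq(G)$ by the relaxed parameter $\lambda_k(G)$ as soon as $k$ is large; Theorem~\ref{thminterior}, which ensures that every matrix in $\mathrm{int}(\cgpsd^{nt})$ lies in one of the polyhedral cones $\CC^{nt}_r$; and Lemma~\ref{lemZIJ}, which exhibits a concrete interior point $Z=I+J$ of $\cgpsd^{nt}$ together with the exact values of all the relevant linear functionals on it. The role of $Z$ is to push a (boundary) solution of~(\ref{eq:tchiq}) into the interior of $\cgpsd^{nt}$ while controlling, via the relaxation parameter, by how much the affine constraints get violated --- recall from Lemma~\ref{lemborder} that a matrix exactly feasible for~(\ref{eq:tchiq}) necessarily lies on the boundary of $\cgpsd^{nt}$, which is precisely why the constraints had to be relaxed.

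First I would record the trivial inequality. For $k\ge k_0$ with $k_0$ as in Lemma~\ref{lem:lambda_k}, and every $r$, the inclusions $\CC^{nt}_r\subseteq\cgpsd^{nt}\subseteq\cl(\cgpsd^{nt})$ show that any pair $(t,A)$ feasible for the program defining $\lambda_k^r(G)$ is feasible for the program~(\ref{eq:lambda_k}) defining $\lambda_k(G)$; hence $\lambda_k^r(G)\ge\lambda_k(G)=\tchiq(G)$. For the reverse inequality, set $t^*=\tchiq(G)$ and fix $A_0\in\cl(\cgpsd^{nt^*})$ feasible for~(\ref{eq:tchiq}), so that $A_0\in\mathcal A^{t^*}$ and $L_{G,t^*}(A_0)=0$. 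For $\varepsilon\in(0,1]$ put $A_\varepsilon=(1-\varepsilon)A_0+\varepsilon Z$. Since $Z\in\mathrm{int}(\cgpsd^{nt^*})$ by Lemma~\ref{lemZIJ}, $A_0\in\cl(\cgpsd^{nt^*})$, and the cone $\cgpsd^{nt^*}$ is convex, the half-open segment from $Z$ towards $A_0$ stays in the interior, so $A_\varepsilon\in\mathrm{int}(\cgpsd^{nt^*})$ for every $\varepsilon\in(0,1]$. Substituting the values $\sum_{i,j}Z_{ui,vj}\in\{(t^*)^2,\,(t^*)^2+t^*\}$ and $L_{G,t^*}(Z)=n(t^*)^2-nt^*+mt^*$ from Lemma~\ref{lemZIJ} and using the feasibility of $A_0$, one gets for all $u,v\in V(G)$
\[
\Big|\sum_{i,j\in[t^*]}(A_\varepsilon)_{ui,vj}-1\Big|\le\varepsilon\big((t^*)^2+t^*\big),\qquad L_{G,t^*}(A_\varepsilon)=\varepsilon\big(n(t^*)^2-nt^*+mt^*\big).
\]
Hence, fixing $k\ge k_0$ and choosing $\varepsilon=\varepsilon_k\le 1/(kC)$ with $C$ the larger of the two constants on the right, the matrix $A_{\varepsilon_k}$ satisfies $A_{\varepsilon_k}\in\mathcal A^{t^*}_k$ and $L_{G,t^*}(A_{\varepsilon_k})\le 1/k$, so it is feasible for the program defining $\lambda_k(G)$ with objective value $t^*$, and in addition $A_{\varepsilon_k}\in\mathrm{int}(\cgpsd^{nt^*})$.

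Next I would invoke Theorem~\ref{thminterior}: there is an integer $r_0$ with $A_{\varepsilon_k}\in\CC^{nt^*}_{r_0}$, so $A_{\varepsilon_k}$ is feasible for the program defining $\lambda_k^{r_0}(G)$, whence $\lambda_k^{r_0}(G)\le t^*=\tchiq(G)$; combined with the trivial inequality this yields $\lambda_k^{r_0}(G)=\tchiq(G)$. Since $\CC^{nt}_r\subseteq\CC^{nt}_{r+1}$, the value $\lambda_k^r(G)$ is nonincreasing in $r$ and stays $\ge\tchiq(G)$, so $\lambda_k^r(G)=\tchiq(G)$ for all $r\ge r_0$; taking this $k$ as $k_0$ gives the required pair. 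For the ``moreover'' part: by~(\ref{O*_r}) the cone $\CC^{nt}_r$ is the conic hull of the finite, explicitly listed family of Gram matrices arising from $\oDelta(nt,r)$, a family of cardinality polynomial in $n$ for fixed $r$ by Lemma~\ref{lemcount}; thus ``$A\in\CC^{nt}_r$'' is captured by finitely many linear (in)equalities, as are $A\in\mathcal A^t_k$ and $L_{G,t}(A)\le 1/k$. Consequently, for fixed $k,r,t$, feasibility of the program defining $\lambda_k^r(G)$ is a linear program, and $\lambda_{k_0}^{r_0}(G)$ --- which is finite by the construction above --- is computed by solving these LPs for $t=1,2,\dots$ and returning the first feasible value; by the above this value equals $\tchiq(G)$.

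The step I expect to be the crux, and the reason the result is existential rather than effective, is the coupling between $\varepsilon$ and $r$: the relaxation forces $\varepsilon_k\to 0$ as $k\to\infty$, while $A_{\varepsilon_k}\to A_0$, which lies on the boundary of $\cgpsd^{nt^*}$ by Lemma~\ref{lemborder}; hence the level $r_0$ that Theorem~\ref{thminterior} returns for $A_{\varepsilon_k}$ may have to grow with $k$, and the argument yields no quantitative bound on $k_0$ or $r_0$. This also indicates that one should not expect a single $r_0$ to serve simultaneously for all large $k$ unless $\cgpsd$ (or the affine section relevant here) is closed.
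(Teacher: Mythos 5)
Your proof follows essentially the same route as the paper's: invoke Lemma~\ref{lem:lambda_k} to pass from $\tchiq$ to $\lambda_k$, perturb a boundary solution $A_0$ into the interior via $(1-\varepsilon)A_0+\varepsilon(I+J)$ using Lemma~\ref{lemZIJ}, land in some $\CC^{nt}_{r_0}$ by Theorem~\ref{thminterior}, and conclude with the polyhedrality of $\CC^{nt}_r$ (Lemma~\ref{lemcount}) for the LP claim. Your closing remark that the argument only pins down a single pair $(k_0,r_0)$ and gives no uniformity in $k$ is a careful reading; the paper's own proof has exactly the same feature, so this is not a gap in your proposal relative to the source.
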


\begin{proof}
From Lemma~\ref{lem:lambda_k} we know that there exists $k_{0} \in \mathbb{N}$ such that $\lambda_{k}(G) = \tchiq(G)$ for all $k \ge k_{0}$. 
In view of this, we just need to show that for this $k_{0}$ there exists an integer $r_{0} \in \mathbb{N}$ for which $\lambda_{k_{0}}^{r_{0}}(G) = \lambda_{k_{0}}(G)$. Let $t = \lambda_{k_{0}}(G) = \tchiq(G)$.

By the definitions~(\ref{eq:lambda_k}) and~(\ref{eq:lambda_k^r}) and the inclusion relationship between the cones $\CC_{r}^{nt}$, 
we have that the sequence of natural numbers $(\lambda_{k_{0}}^{r})_{r \in \mathbb{N}}$ is nonincreasing and it is lower bounded 
by $\lambda_{k_{0}}(G)$. Hence, there exists a natural number $r_{0}$ such that $\lambda_{k_{0}}^{r}(G) = \lambda_{k_{0}}^{r_{0}}(G)$ for all $r \ge r_{0}$.
 We are left to prove that $\lambda_{k_{0}}^{r_{0}}(G) \le \lambda_{k_{0}}(G)=t.$

To this end, we show that there exists a matrix $Y_{k_{0}} \in \text{\rm int}(\cgpsd)$ with $Y_{k_{0}} \in \mathcal{A}_{k_{0}}^{t}$ and $L_{G,t}(Y_{k_{0}}) \le \frac{1}{k_{0}}$.
This will suffice since then 
by Theorem~\ref{thminterior}, $Y_{k_{0}} \in \CC_{r_{0}}^{nt}$ for some $r_0$. Therefore, $Y_{k_{0}}$ satisfies the conditions in program~(\ref{eq:lambda_k^r}) and thus $\lambda_{k_{0}}^{r_{0}}(G) \le t = \lambda_{k_{0}}(G)$. 
To show the existence of such a matrix $Y_{k_{0}}$, let $A\in \cl(\cgpsd)$ be a feasible solution of the program~(\ref{eq:tchiq}) defining  $\tchiq(G) = t$ and consider the matrix 
$Z = I + J $ which belongs to $\text{\rm int}(\cgpsd)$ (by Lemma \ref{lemZIJ}).
Then, any convex combination  $Z_\varepsilon=(1- \varepsilon )A+ \varepsilon Z$ (for $0<\varepsilon <1$)  lies in the interior of $\cgpsd$. 
If we can tune $\varepsilon$ so that the new matrix $Z_\varepsilon$ satisfies the conditions in program~(\ref{eq:lambda_k^r}), then we can choose $Y_{k_0}=Z_\varepsilon$ and we are done.
We claim that selecting $\varepsilon := \min\{ \frac{1}{k_{0}(t^{2}+t-1)}, \frac{1}{k_{0}(nt^{2}-nt+2mt)} \}$ will do the trick. 
Indeed, for this choice of $\varepsilon$ we have $Z_\varepsilon \in \text{\rm int}(\cgpsd)$ and $L_{G,t}(Z_\varepsilon) = \varepsilon L_{G,t}(Z) \le \frac{1}{k_{0}}$ (use Lemma \ref{lemZIJ}).
Moreover, $Z_\varepsilon \in \mathcal{A}_{k_{0}}^{t}$ since for all $u,v \in V(G)$ the following holds
{\small 
\beqn
\big| \sum_{i,j \in [t]} Y_{k_{0}}(ui,vj) - 1 \big| 
= \big| (1-\varepsilon) + \varepsilon \sum_{i,j \in [t]} Z_{ui,vj} -1 \big|
\le \big| -\varepsilon + \varepsilon \sum_{i,j \in [t]} Z_{ui,uj}  \big|
 =  \big|  \varepsilon (t^{2}+t - 1) \big| \le \frac{1}{k_{0}}.
\eeqn}
Summarizing, from Lemma~\ref{lem:lambda_k} we know that there exists an integer $k_{0}\in\N$ such that $\lambda_{k_{0}}(G) = \tchiq(G)$ and we just proved that for this $k_{0}$ there exists an integer $r_{0}\in\N$ with the property that $\lambda_{k_{0}}^{r_{0}}(G) = \lambda_{k_{0}}(G) = \tchiq(G)$.
\end{proof}

The same result holds for the parameter $\widetilde{\chi}_{qa}(G)$ introduced in (\ref{def:tchiqa}). For clarity we repeat its definition  in the following form:
\beqn\widetilde{\chi}_{qa}(G) =\min t\in\N \text{ s.t. } \exists A \in \cl(\cgpsd^{2nt}), A\in \mathcal B^t \text{ with } \mathcal{L}_{G,t}(\pi(A))=0.\eeqn
Note the analogy with the definition (\ref{eq:tchiq}) of $\widetilde \chi_q(G)$. The only difference is that we now work with matrices $A$ of size $2nt$ (instead of $nt$) lying in the affine space ${\mathcal B}^t$ (instead of $\mathcal A^t$) and satisfying $\mathcal L_{G,t}(\pi(A))=0$ (instead of $L_{G,t}(A)=0$).
In analogy to the parameter $\lambda_k(G)$ we can define the parameter $\Lambda_k(G)$ by doing these replacements and defining the relaxed affine space $\mathcal{B}^{t}_{k}$ in the same way as $\mathcal{A}^{t}_{k}$ was defined from $\mathcal{A}^{t}$. Then the analog of Lemma \ref{lem:lambda_k} holds: there exists an integer $k_0$ such that $\widetilde{\chi}_{qa}(G)=\Lambda_k(G)$ for all $k\ge k_0$.
Next, replacing the cone $\cl(\cgpsd^{2nt})$ by $\CC^{2nt}_r$, we get the following parameter $\Lambda^r_k(G)$ (the analog of $\lambda^k_r(G)$):
\beqn\Lambda^r_k(G)=\min t \in \mathbb{N} \text{ s.t. } 
A \in \CC^{2nt}_r, A \in \mathcal B^t_{k} \text{ with }  {\mathcal L}_{G,t}(\pi(A))\le \frac{1}{k}.
\eeqn
The  analog of Theorem \ref{theotchiq} holds, whose proof is along the same lines and thus omitted.
\begin{theorem}
For any graph G, there exist $k_0$ and $r_0 \in \N$  such that $\widetilde \chi_{qa}(G) = \Lambda^r_k(G)$  for all $k\ge k_0$ and $r\ge r_0$.
Hence the parameter $\widetilde{\chi}_{qa}(G)$ can be computed by a linear program.
\end{theorem}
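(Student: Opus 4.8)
The plan is to mirror, almost verbatim, the proof of Theorem~\ref{theotchiq}, with matrices of size $2nt$ in place of $nt$, the affine space $\mathcal B^t$ in place of $\mathcal A^t$, and the linear functional $\mathcal L_{G,t}\circ\pi$ in place of $L_{G,t}$ (this composition is again linear, being a composition of linear maps). First I would invoke the analog of Lemma~\ref{lem:lambda_k} for the parameter $\Lambda_k(G)$ stated just above, to fix an integer $k_0$ with $\widetilde\chi_{qa}(G)=\Lambda_{k_0}(G)=\Lambda_k(G)$ for all $k\ge k_0$, and set $t:=\Lambda_{k_0}(G)=\widetilde\chi_{qa}(G)$. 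Since $\CC^{2nt}_r\subseteq\CC^{2nt}_{r+1}\subseteq\cgpsd^{2nt}\subseteq\cl(\cgpsd^{2nt})$, the sequence $(\Lambda^r_{k_0}(G))_{r\ge1}$ is a nonincreasing sequence of natural numbers bounded below by $\Lambda_{k_0}(G)=t$, hence eventually constant; let $r_0$ be a value from which on it is constant. As $\Lambda^r_{k_0}(G)\ge\Lambda_{k_0}(G)=t$ always holds, it then remains only to prove $\Lambda^{r_0}_{k_0}(G)\le t$.

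For the latter it suffices, exactly as in Theorem~\ref{theotchiq}, to exhibit one matrix $Y\in\text{\rm int}(\cgpsd^{2nt})$ with $Y\in\mathcal B^t_{k_0}$ and $\mathcal L_{G,t}(\pi(Y))\le\tfrac1{k_0}$: by Theorem~\ref{thminterior} such a $Y$ lies in some $\CC^{2nt}_r$, and, after replacing $r$ by $\max(r,r_0)$ (the cones being nested), $Y$ witnesses $\Lambda^r_{k_0}(G)\le t$, whence $\Lambda^{r_0}_{k_0}(G)=\Lambda^r_{k_0}(G)\le t$. To build $Y$, I would start from a feasible solution $A\in\cl(\cgpsd^{2nt})$ of the program~(\ref{def:tchiqa}) defining $\widetilde\chi_{qa}(G)=t$ (so $A\in\mathcal B^t$ and $\mathcal L_{G,t}(\pi(A))=0$) and from the matrix $Z=I+J\in\SSS^{2nt}$, which lies in $\text{\rm int}(\cgpsd^{2nt})$ by the (dimension-free) argument of Lemma~\ref{lemZIJ}. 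Then every convex combination $Y_\varepsilon:=(1-\varepsilon)A+\varepsilon Z$, $\varepsilon\in(0,1]$, lies in $\text{\rm int}(\cgpsd^{2nt})$, since the half-open segment from an interior point to a point of the closure stays in the interior; the task is to tune $\varepsilon$ so that $Y_\varepsilon$ also meets the two remaining constraints.

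Here is where the only real computation enters, and it is the step I expect to require the most care: writing $A$ in the block form $\left(\begin{smallmatrix}R_1&P\\P^{T}&R_2\end{smallmatrix}\right)$ of Theorem~\ref{thm:corr} (so $\pi(A)=P$), one evaluates the block sums occurring in the constraints (\ref{affine1})--(\ref{affine1c}) on $Z=I+J$ and finds that each equals either $t^2$ or $t^2+t$ (the value $t^2+t$ arising only for the diagonal blocks $R_1,R_2$ when the two vertex indices coincide), so that for $Y_\varepsilon$ every such sum differs from $1$ by at most $\varepsilon(t^2+t-1)$; and since $\mathcal L_{G,t}\circ\pi$ is linear and $\mathcal L_{G,t}(\pi(A))=0$, one gets $\mathcal L_{G,t}(\pi(Y_\varepsilon))=\varepsilon\,\mathcal L_{G,t}(\pi(Z))=\varepsilon(nt^2-nt+mt)$ with $m=|E(G)|$, using that $\pi(Z)$ is an all-ones block. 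Choosing $\varepsilon:=\min\{\tfrac1{k_0(t^2+t-1)},\tfrac1{k_0(nt^2-nt+2mt)}\}$ then makes $Y:=Y_\varepsilon$ satisfy $Y\in\mathcal B^t_{k_0}$ and $\mathcal L_{G,t}(\pi(Y))\le\tfrac1{k_0}$ (the edgeless case $t=1$ being trivial). The difficulty is purely one of bookkeeping: one must correctly account for the $2\times2$ block structure and for the fact that $\pi$ extracts the off-diagonal block $P$; no idea beyond the proof of Theorem~\ref{theotchiq} is needed.

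Finally, since $\CC^{2nt}_{r_0}$ is polyhedral with the explicit finite description~(\ref{O*_r}), for each candidate value $t\in\N$ the feasibility problem defining $\Lambda^{r_0}_{k_0}(G)$ is a linear program (finitely many linear inequalities coming from membership in $\CC^{2nt}_{r_0}$, from $\mathcal B^t_{k_0}$, and from $\mathcal L_{G,t}(\pi(\cdot))\le\tfrac1{k_0}$), so that $\widetilde\chi_{qa}(G)=\Lambda^{r_0}_{k_0}(G)$ is computed by linear programming; and the equality $\widetilde\chi_{qa}(G)=\Lambda^r_k(G)$ for all $k\ge k_0$ and all $r\ge r_0$ follows, just as in Theorem~\ref{theotchiq}, from the monotonicity of $\Lambda^r_k(G)$ (nondecreasing in $k$, nonincreasing in $r$).
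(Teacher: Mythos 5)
Your proposal is correct and follows exactly the route the paper intends: the paper states that the proof "is along the same lines" as Theorem~\ref{theotchiq} and omits it, and you reconstruct that argument faithfully, with the right substitutions ($2nt$ for $nt$, $\mathcal B^t$ for $\mathcal A^t$, $\mathcal L_{G,t}\circ\pi$ for $L_{G,t}$) and a careful verification that $Z=I+J$ still works as the interior perturbation, including the correct accounting of the $2\times2$ block structure and the off-diagonal block extracted by $\pi$.
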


\section{The closure of \texorpdfstring{$\cgpsd$}{CS}}\label{sec:closure}

In the Introduction we have mentioned that if the completely positive semidefinite cone would be closed, then the set of quantum correlations would be closed as well (see also \cite{MR:2014}). 
Although we still do not know whether $\cgpsd$ is closed, in this section we make a small progress by giving a new description of the closure of $\cgpsd$,  using the tracial ultraproduct of matrix algebras $\R^{k\times k}$. More precisely, the closure $\cl(\cgpsd)$ consists of the symmetric matrices having a Gram representation by positive semidefinite operators which belong to the above mentioned tracial ultraproduct. This ultraproduct will be an algebra of bounded operators on an infinite dimensional Hilbert space. 

\smallskip
A connection between $\cl(\cgpsd)$ and the Gram matrices of operators on infinite dimensional Hilbert spaces has already been made by two of the authors in \cite{LP13}. Namely, let $\mathcal S^\N$ denote the vector space of all infinite symmetric matrices $X=(X_{ij})$ indexed by $\N$ with finite norm $\sum_{i,j\geq 1}X_{ij}^2<\infty$, equipped with the inner product $\langle X, Y\rangle=\sum_{i,j\geq 1} X_{ij}Y_{ij}$. Using this notation, we let $\mathcal{CS}^n_{\infty+}$ denote the convex cone of matrices $A\in\mathcal S^n$ having a Gram representation by positive semidefinite matrices in $S^\N$. Then it is shown in \cite{LP13} that $\cgpsd\subseteq \mathcal{CS}_{\infty+}\subseteq\cl(\mathcal{CS}_{\infty+})=\cl(\cgpsd)$ holds. In particular, the closure of $\cgpsd$ a priori contains matrices having a Gram representation by infinite dimensional matrices. 

\smallskip
Tracial ultraproducts of matrix algebras, or more generally of finite von Neumann algebras, are an adapted version of classical ultraproducts from model theory. Since the methods used might be not familiar to the reader, we recap the construction of tracial ultraproducts. 
Then we introduce the new cone $\csu$ and show that it is equal to the closure of $\cgpsd$.
Finally, we present a possibly larger cone $\csvn$, containing $\cgpsd$, which can be seen as an infinite dimensional analog of the completely positive semidefinite cone. This cone turns out to be closed. Furthermore, $\csvn$ would be equal to $\cl(\cgpsd)$ if the embedding problem of Connes had an affirmative answer.
More details about the algebras involved in the general case as well as on the embedding problem of Connes are given in Section \ref{sec:connes}. 

\subsection{Tracial ultraproducts}
The construction of tracial ultraproducts is a standard technique in von Neumann algebras, see, e.g., the appendix of \cite{BObook}.
Classically one considers complex Hilbert spaces but the construction works similarly over real Hilbert spaces. Alternatively one can use the complex construction and `realify' the resulting algebra afterwards, see for instance \cite{ARUbook,Libook}.
Ultraproducts are constructions with respect to an ultrafilter. We will only consider ultrafilters on $\N$.   
Throughout $\pow(\N)$ is the collection of all subsets of $\N$.

\begin{definition}
An {\rm ultrafilter} on the set $\N$ is a subset  $\cU\subseteq\pow(\N)$ satisfying the conditions:
\begin{enumerate}[(a)]\itemsep0pt
\item $\emptyset\notin \cU$, \label{empty}
\item if $A\subseteq B\subseteq \N$ and $A\in\cU$ then $B\in\cU$,
\item if $A,B\in\cU$ then $A\cap B\in\cU$,\label{disjointcond}
\item for every $A\in\pow(\N)$ either $A\in\cU$ or $\N\setminus A\in\cU$.\label{ultracond}
\end{enumerate} 
\end{definition}

In particular, any two elements in $\cU$ need to have non-empty intersection (from \eqref{empty} and \eqref{disjointcond}), which allows
only two kinds of ultrafilters: Either every element of $\cU$ contains a common element $n_0\in \N$ or $\cU$ contains the cofinite sets of $\N$. We are only interested in the second kind of ultrafilters, which are  called  \textit{free ultrafilters}. 
For a given free ultrafilter $\cU$ on $\N$ we can define the \textit{ultralimit} $\lim_\cU a_k$ of
a bounded sequence $(a_k)_{k\in\N}$ of real numbers as follows:
\begin{align}\label{eq:ulimit}
\lim_\cU a_k=a
\text{ if } \{k\in\N: |a_k-a|<\varepsilon\}\in\cU \ \text{ for all } \varepsilon>0.
\end{align} 

Let us  have a look at ultralimits in a less formal way. If we have a non-free ultrafilter, i.e., $\cU=\{A\in\pow(\N): k_0\in A\}$ for some $k_0\in\N$, then $
\lim_\cU a_k=a_{k_0}$ for any sequence $(a_k)_{k\in\N}\subseteq\R$. The case of a free ultrafilter is more interesting. Then the ultralimit of a bounded sequence $(a_k)_{k\in\N}$ will be 
one of its accumulation points. For example, the sequence given by $a_k:=(-1)^k$ for all $k\in\N$ has two accumulation points, and both can be attained as an ultralimit depending on the choice of the ultrafilter $\cU$. In fact, considering the set $2\N$ of even numbers, we get by conditions \eqref{disjointcond} and \eqref{ultracond} that any ultrafilter contains either $2\N$ or its complement (the odd numbers $2\N+1$) but not both. Hence there is an ultrafilter $\cU$ (containing $2\N$) with $\lim_\cU a_k=1$ and an ultrafilter $\cU'$ (containing $2\N+1$) with $\lim_{\cU'}a_k=-1$.  

\begin{rem}\label{rem:ulimit}
Any bounded sequence of real numbers has an ultralimit and this is unique for fixed $\cU$. In particular, if $\lim_{n\to\infty} a_k=a$ then $\lim_\cU a_k=a$ for any free ultrafilter $\cU$ on $\N$.
\end{rem}

We can use ultralimits to construct 
 the tracial ultraproduct of a sequence $(\R^{d_k\times d_k})_{k\in\N}$ of matrix algebras for $d_k\in\N$. To simplify notation we let $\cM_k=\R^{k\times k}$ denote the matrix algebra of all $k\times k$ matrices and we consider the full sequence $(\cM_k)_{k\in\N}$, but the same construction would work for the sequence $(\cM_{d_k})_{k\in\N}$.
Here we assume that each $\cM_k$ is endowed with the normalized trace $\nt_k=\frac1k\Tr$ (if the dimension $k$ is clear we might simply write $\nt$) and the corresponding inner product, so that $\|I\|_2=\nt(I)=1$ for the identity matrix.
For $T\in \cM_k$, $\|T\|$ denotes its operator norm and $\|T\|_2$ its $L_2$-norm, that satisfy  $\|ST\|_2\le \|S\| \|T\|_2$ for $S,T\in\cM_k$.
Define the $C^*$-algebra 
\beqn\ell^\infty(\N,(\cM_k)_k)
:=\{(T_k)_{k\in\N}\in\prod_{k\in\N}\cM_k: \sup_{k\in\N}\|T_k\|<\infty\}.\eeqn
Every free ultrafilter $\cU$ on $\N$ defines a two-sided ideal
\beqn\mathcal I_\cU:=\{(T_k)_{k\in\N}\in\ell^\infty(\N,(\cM_k)_k): \lim_{\cU}\|T_k\|_2=0\},\eeqn  
which is well-defined since sequences in $\ell^\infty(\N,(\cM_k)_k)$ are also bounded in the Hilbert-Schmidt norm.
The ideal $\mathcal I_\cU$ is a maximal ideal and therefore it is closed with respect to the operator  norm. 
The quotient algebra 
\beqn\cM_\cU:=\ell^\infty(\N,(\cM_k)_k)/\mathcal I_\cU\eeqn is called the
\textit{tracial ultraproduct} of $(\cM_k)_k$ along $\cU$. 
Using the Cauchy-Schwarz inequality it is easy to show that the map
\begin{align*}
\tau_{\cU}:\; \cM_\cU \to\R,\quad
(T_k)_{k\in\N}+\mathcal I_\cU \mapsto\lim_{\cU}\nt_k(T_k) 
\end{align*}
defines a tracial state (or trace) on $\cM_\cU$, i.e., $\tau_{\cU}$ is a normalized positive linear map satisfying $\tau_\cU(T^*T)=\tau_\cU(TT^*)$ for any $T\in\cM_\cU$.
In fact, $\cM_\cU$ is a finite von Neumann algebra of type II$_1$ (see below for definitions). In particular, $\cM_\cU$ is a subalgebra of bounded operators on an infinite dimensional Hilbert space.

\subsection{Von Neumann algebras and Connes' embedding problem} \label{sec:connes}
We give a short overview of what is needed for our purpose; for details we refer to the book \cite{TakII}. 

A \textit{von Neumann algebra} $\cN$ is a unital $*$-subalgebra
of the $*$-algebra $\cB(\cH)$ of bounded operators on a Hilbert 
space $\cH$ that is closed in the weak operator topology.
The \textit{weak operator topology} is the weakest topology on $\cB(\cH)$ such that the functional $\cB(\cH)\to\C$ which maps $T\mapsto \langle Tx,y\rangle$ is continuous for any 
$x,y\in \cH$. 
In other words, 
a sequence $(T_i)_i \in \cB(\cH)$ converges to $T\in \cB(\cH)$ in the weak $*$-topology 
 if for all $x,y\in \cH$ the sequence $(\langle T_ix,y \rangle)_i$ converges to $\langle Tx,y \rangle$. 

A \textit{factor} is a von Neumann algebra with trivial center. 
Every von Neumann algebra on a separable Hilbert space 
is isomorphic to a direct integral of factors, which is the appropriate analog of matrix block decomposition.

A factor $\cF$ is \emph{finite} if it possesses a normal,
faithful, tracial state $\tau:\mathcal F\to\C$. In particular, we can always assume that $\tau(I)=1$. This tracial state
$\tau$ is unique and gives rise to the Hilbert-Schmidt norm on $\mathcal F$
given by $\|T\|_2^2:=\tau(T^*T)$ for $T\in\mathcal F$.
A von Neumannn algebra is \textit{finite} if it decomposes into finite factors. Every finite von Neumann algebra comes with a trace, which might not be unique.  

Von Neumann algebras can be classified into two types depending on  the behavior of their projections (i.e., the elements $P\in \cN$ satisfying $P=P^*=P^2$). If for a given finite factor $\cF$ with trace $\tau$ the range of $\tau$ over all projections $P\in\mathcal F$ is discrete, 
then $\mathcal F$ is of \textit{type} I. A von Neumann algebra is of type I if it consists only of type I factors. Any finite type I von Neumann algebra is isomorphic to a matrix algebra over $\C$. The only other possibility for a finite factor is that $\tau$ maps projections (surjectively) onto $[0,1]$. Those are II$_1$ factors, and a von Neumann algebra is of type II$_1$ if it is finite and contains at least one II$_1$ factor.  

\medskip
Connes' embedding problem asks to which extent II$_1$ factors 
are close to matrix algebras. Murray and von Neumann showed that
there is a unique II$_1$ factor $\mathcal R$ which contains an ascending
sequence of finite-dimensional von Neumann subalgebras, i.e. matrix algebras, with dense union. This factor $\mathcal R$ is called the
\emph{hyperfinite {\rm II}$_1$ factor}. There are several constructions 
of $\mathcal R$, e.g., as infinite tensor product $\overline\bigotimes_{n\in\N}M_2(\C)$ 
of the von Neumann algebras $M_2(\C)$, which is the weak closure of the 
algebraic tensor product $\bigotimes_{n\in\N}M_2(\C)$. In fact, any infinite countable sequence of matrix algebras will do. 

Connes 
conjectured that all separable II$_1$ factors embed (in a trace-preserving way) into an ultrapower
$\cR^\cU$ of the hyperfinite II$_1$ factor $\cR$, where the ultrapower $\cR^\cU$ is just a short-hand notation for the ultraproduct 
$\ell^\infty(\N,(\cR)_k)/\mathcal I_\cU$. 
Since $\cR$ contains ascending sequences of matrix algebras with dense union, any matrix algebra $\cM_k$ embeds into $\cR$. One can extend these embeddings of $\cM_k$ into $\cR$ to an embedding of the tracial ultraproduct $\cM_\cU$ into $\cR^\cU$ (using a more general construction of ultralimits), hence the finite von Neumann algebra $M_\cU$ satisfies Connes' embedding conjecture.

This conjecture is equivalent to a huge variety of other important conjectures in, e.g.,  operator theory, noncommutative real algebraic geometry and quantum information theory.
In particular, as we already mentioned in the introduction, it is equivalent to deciding whether $\cl(\mathcal Q)=\mathcal Q_c$ holds. 

\smallskip
For our description of $\cl(\cgpsd)$, we will use the following result on finite von Neumann algebras which embed into $\cR^\cU$. This proposition is applied to the algebra $\cM_\cU$. 
The claim is that tracial moments of an embeddable finite factor can be approximated up to arbitrary precision by matricial tracial moments. This is stated more formally in the next proposition, for a proof see e.g. \cite{CD:2008}.

\begin{proposition}\label{prop:micro}
Let $(\cF,\tau)$ be a II$_1$ factor which embeds into $\cR^\cU$ for some
free ultrafilter $\cU$. Then $\cF$ has matricial microstates, i.e., for any $n\in\N$ 
and given self-adjoint $T_1,\dots,T_n\in\cF$ the following holds:
for every $k\in\N$ and   $\ep>0$ there exists  $d\in\N$ and $B_1,\dots,B_n\in\mathcal S^d$ such that 
\begin{align*}
|\tau(T_{i_1}\cdots T_{i_t})-\nt(B_{i_1}\dots B_{i_t}))|<\ep \ \text{ for all } i_1,\dots,i_t\in[n], t\leq k.
\end{align*}
\end{proposition}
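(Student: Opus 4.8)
The plan is to prove this in two reduction steps, each using only standard facts about ultraproducts and about the hyperfinite factor $\cR$. Fix self-adjoint $T_1,\dots,T_n\in\cF$ and fix $k\in\N$, $\ep>0$. There are only finitely many words $w=(i_1,\dots,i_t)$ with entries in $[n]$ and $t\le k$, so it suffices to handle all of them simultaneously.

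First I would transfer the problem into the ultrapower. Composing with the trace-preserving embedding $\cF\hookrightarrow\cR^\cU$, I may assume $\cF\subseteq\cR^\cU=\ell^\infty(\N,(\cR)_m)/\mathcal I_\cU$, so that each $\tau(T_{i_1}\cdots T_{i_t})$ is computed as $\lim_\cU\tau_\cR$ of the corresponding product of representatives, where $\tau_\cR$ is the (unique) trace on $\cR$. Write each $T_i$ as the class of a bounded sequence $(R_i^{(m)})_{m\in\N}$ with $R_i^{(m)}\in\cR$; since the $T_i$ are self-adjoint I may replace $R_i^{(m)}$ by $\tfrac12(R_i^{(m)}+(R_i^{(m)})^*)$, which changes the sequence only by an element of $\mathcal I_\cU$, and hence assume every $R_i^{(m)}$ is self-adjoint; all of them are uniformly bounded by some constant $C$ since the sequences lie in $\ell^\infty$ and there are finitely many $i$. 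For each word one then has $\tau(T_{i_1}\cdots T_{i_t})=\lim_\cU\tau_\cR(R_{i_1}^{(m)}\cdots R_{i_t}^{(m)})$, so by definition of the ultralimit the set of indices $m$ for which $|\tau_\cR(R_{i_1}^{(m)}\cdots R_{i_t}^{(m)})-\tau(T_{i_1}\cdots T_{i_t})|<\ep/2$ holds for \emph{all} words $w$ lies in $\cU$, hence is nonempty. Fixing such an $m$ and setting $R_i:=R_i^{(m)}$ reduces the statement to: self-adjoint $R_1,\dots,R_n\in\cR$ with $\|R_i\|\le C$ have their tracial moments up to length $k$ matched, up to error $\ep/2$, by normalized-trace moments of symmetric matrices.

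Second I would use hyperfiniteness of $\cR$: it contains an increasing chain $\cM_{d_1}\subseteq\cM_{d_2}\subseteq\cdots$ of matrix subalgebras whose union is dense in $\cR$ for $\|\cdot\|_2$ (the union is dense in $L^2(\cR,\tau_\cR)\supseteq\cR$). Working over $\R$ throughout, as the paper does, each $\cM_{d_j}=\R^{d_j\times d_j}$ and its self-adjoint elements are symmetric. Given a small $\delta>0$ to be fixed below, pick $j$ and elements of $\cM_{d_j}$ approximating the $R_i$ within $\delta$ in $\|\cdot\|_2$; symmetrizing these and then truncating via the continuous functional calculus with the $1$-Lipschitz map $x\mapsto\max(-C,\min(C,x))$, which is $\|\cdot\|_2$-contractive on self-adjoint operators and fixes each $R_i$, I obtain symmetric $B_i\in\cM_{d_j}$ with $\|B_i\|\le C$ and still $\|R_i-B_i\|_2<\delta$. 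Since $\tau_\cR$ restricts to the normalized trace $\nt$ on any matrix subalgebra, a telescoping of $R_{i_1}\cdots R_{i_t}-B_{i_1}\cdots B_{i_t}$ into $t\le k$ terms, each bounded using $|\tau_\cR(\,\cdot\,)|\le\|\cdot\|_2$ and $\|XY\|_2\le\|X\|\,\|Y\|_2$, gives $|\tau_\cR(R_{i_1}\cdots R_{i_t})-\nt(B_{i_1}\cdots B_{i_t})|\le kC^{k-1}\delta$. Taking $\delta<\ep/(2kC^{k-1})$ and combining with the first step yields $|\tau(T_{i_1}\cdots T_{i_t})-\nt(B_{i_1}\cdots B_{i_t})|<\ep$ for every word of length $\le k$, with $d:=d_j$.

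The argument is entirely standard --- which is why the paper only cites \cite{CD:2008} --- and I do not expect a genuine obstacle. The only points needing care are bookkeeping the two independent error sources (the ultrafilter approximation of $\cF$-moments by $\cR$-moments, and the $\|\cdot\|_2$-approximation of operators in $\cR$ by matrices) and the functional-calculus truncation that keeps operator norms uniformly bounded, so that the telescoping estimate for products of length up to $k$ does not blow up; the one genuinely nontrivial input is the Murray--von Neumann description of $\cR$ as a $\|\cdot\|_2$-limit of matrix algebras, which I would quote rather than reprove.
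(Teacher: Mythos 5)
The paper itself gives no proof of Proposition~\ref{prop:micro} and simply cites \cite{CD:2008}; your argument reconstructs the standard two-step reduction (ultrafilter selection to pass from $\cF$ to $\cR$, then $\|\cdot\|_2$-density of the increasing chain of matrix subalgebras of $\cR$) and is essentially the proof one finds in that reference, so there is no methodological divergence to report. The argument is correct. Two small points deserve a remark. First, the telescoping bound $kC^{k-1}\delta$ fails when $C<1$; you should replace $C$ by $\max(1,C)$ in that estimate (harmless, as you are free to enlarge the a priori bound). Second, your parenthetical ``working over $\R$ throughout'' glosses over the fact that the Murray--von Neumann theorem and the standard construction of $\cR$ live over $\C$, so the self-adjoint elements you produce in $\cM_{d_j}$ are Hermitian complex matrices, not yet real symmetric; one should finish by realifying, e.g., sending a Hermitian $H=A+iB$ (with $A$ symmetric, $B$ antisymmetric real) to $\left(\begin{smallmatrix} A & -B \\ B & A \end{smallmatrix}\right)\in\mathcal S^{2d}$, a unital $*$-homomorphism that doubles the dimension and preserves the normalized trace on Hermitian products, hence all the moments you care about. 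Finally, the assertion that the truncation $x\mapsto\max(-C,\min(C,x))$ is $\|\cdot\|_2$-contractive on self-adjoint operators, i.e., that $1$-Lipschitz functions are $\|\cdot\|_2$-Lipschitz under functional calculus, is true (spectral-decomposition argument: $\|f(A)-f(B)\|_2^2=\int\!\!\int (f(\lambda)-f(\mu))^2\, d\tau(E_\lambda F_\mu)\le \|A-B\|_2^2$ for $1$-Lipschitz $f$) but is not entirely trivial; alternatively Kaplansky density lets you choose the matrix approximants with $\|B_i\|\le C$ directly and avoid the truncation step altogether.
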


\subsection{Ultraproduct description of \texorpdfstring{$\cl(\cgpsd)$}{clCS}}

We are now ready to define the new cone $\csu$ which will turn out to be equal to the closure of $\cgpsd$. For this, we fix a free ultrafilter $\cU$ on $\N$ and consider the tracial ultraproduct
$\cM_\cU=\ell^\infty(\N,(\cM_k)_k)/\mathcal I_\cU$ 
where again $\cM_k$ denotes the
full matrix algebra $\R^{k\times k}$ for any $k\in\N$. Using this we define
\beqn\csu:=\{A\in\cpsd: A=(\tau_\cU(X_iX_j)) \text{ for some  positive semidefinite } X_1,\dots,X_n\in {\cM_\cU}\}.\eeqn 
We note that the trace $\tau_\cU$ is normalized (i.e.,  $\tau_\cU(I)=1$) whereas we used the (not normalized) trace $\Tr$ in the definition of $\cgpsd$. 
However, both descriptions agree up to rescaling of the $X_i$'s.

To show that the closure of $\cgpsd$ is a subset of $\csu$  we will consider a sequence of matrices 
$A^{(k)}\in \cgpsd^n$ converging to some $A\in\mathcal S^n$, i.e., $\lim_{k\to \infty}A^{(k)}_{ij}=A_{ij}$ for all $i,j\in [n]$.
A priori, for each $k$, there exist an integer $d_k$ and matrices $X^{(k)}_1,\ldots,X^{(k)}_n\in \mathcal S^{d_k}_+$ such that 
$A^{(k)}=(\nt(X^{(k)}_iX^{(k)}_j))$. The next lemma says that without loss of generality we can assume $d_k=k$ for all $k\in\N$.

\begin{lemma}\label{lem:order}
If $(X_k)_k,(Y_k)_k\in \prod_{k\in\N} \cpsd^{d_k}$ are such that the sequence $(\nt_{d_k}(X_kY_k))_{k\in\N}$ 
converges to some $a\in\R$, then there exist $(X'_k)_k,(Y'_k)_k\in \prod_{k\in\N} \cpsd^{k}$ with $\nt_k(X'_kY'_k)\to a$ as $k\to\infty$.
\end{lemma}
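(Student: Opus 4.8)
The statement is purely about reindexing sequences of matrices of varying sizes into sequences of matrices of size exactly $k$, while preserving the limit of normalized traces of products. The plan is to handle two cases depending on whether $d_k$ is smaller or larger than $k$, and to use padding by identity blocks (for the "too small" case) and by zero blocks (for the "too large" case is impossible to shrink, so the real issue is the opposite direction). More carefully: for each $k$ we are given $X_k,Y_k\in\cpsd^{d_k}$; we want to produce $X'_k,Y'_k\in\cpsd^k$ with $\nt_k(X'_kY'_k)\to a$. The clean way is to first observe that we may assume the sequence $(d_k)_k$ is nondecreasing and tends to infinity, since we are free to choose representatives: if $d_k$ is bounded we can embed everything into a fixed $\cM_D$ by padding with zeros and then further pad up to size $k$. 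So the core construction is: given $X_k\in\cpsd^{d_k}$, define $X'_k\in\cpsd^k$ by placing $X_k$ in the top-left block and filling the remaining $(k-d_k)\times(k-d_k)$ block appropriately — and the key point is to choose the filler so that both positive semidefiniteness and the trace identity are preserved.

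The first step is to note that if $d_k \le k$, padding $X_k$ by a zero block in the bottom-right gives $X'_k\in\cpsd^k$ with $\Tr(X'_kY'_k)=\Tr(X_kY_k)$, but the normalized trace changes: $\nt_k(X'_kY'_k) = \frac{d_k}{k}\nt_{d_k}(X_kY_k)$. So pure zero-padding scales the trace by $d_k/k$, which is fine only if $d_k/k\to 1$. To fix the scaling I would instead pick the size carefully: choose $m_k$ to be a multiple of $d_k$ that is close to $k$ (say $m_k = d_k\lfloor k/d_k\rfloor$), and set $X'_k = I_{m_k/d_k}\otimes X_k$, $Y'_k = I_{m_k/d_k}\otimes Y_k$, so that $\nt_{m_k}(X'_kY'_k) = \nt_{d_k}(X_kY_k)$ exactly. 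Then one still needs to go from size $m_k$ to size exactly $k$; here $m_k \le k < m_k + d_k$, so $k - m_k < d_k$, and we can pad by a small zero block of size $k-m_k = o(m_k)$ provided $d_k = o(k)$, which again gives $\nt_k = \frac{m_k}{k}\nt_{m_k} \to a$ since $m_k/k\to 1$.

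The second step is to dispatch the remaining cases. If $\liminf d_k/k > 0$ along some subsequence, the tensoring trick needs $d_k\mid$ something near $k$, which can fail; but we can circumvent this by passing to representatives with larger sizes from the start: replacing $(X_k,Y_k)$ on $\cpsd^{d_k}$ by $(I_{N}\otimes X_k, I_N\otimes Y_k)$ on $\cpsd^{Nd_k}$ leaves $\nt$ unchanged, so we may assume $d_k/k\to 0$ (e.g. assume $d_k \le \sqrt k$ after such a replacement, using that we only care about the asymptotics and can choose $N = N(k)$ growing slowly). After this normalization, the construction of the previous paragraph applies verbatim. Finally, for small $k$ where $d_k > k$ this is a non-issue: the conclusion only concerns the tail of the sequence (we only need $\nt_k(X'_kY'_k)\to a$), so we may define $X'_k,Y'_k$ arbitrarily (e.g. as the zero matrix) for the finitely many bad indices, or more precisely for all $k$ below the point where our rescaled $d_k$ drops below $k$.

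The main obstacle I expect is the bookkeeping around making $d_k$ effectively $o(k)$ while keeping the normalized trace exactly invariant: one must choose the tensor multiplicity $N=N(k)$ and the padding sizes simultaneously so that $d_k N(k) \le k$, $k - d_k N(k) = o(d_k N(k))$, and the leftover zero-padding contributes a vanishing relative error. A convenient uniform choice is $N(k) = \lfloor k/d_k^2\rfloor$ when $d_k^2 \le k$ (making the padded part of relative size $O(1/d_k)\to 0$ as long as $d_k\to\infty$, and handling bounded $d_k$ separately by the same argument with a fixed embedding dimension). Once the sizes are chosen, positive semidefiniteness of $X'_k,Y'_k$ is immediate (Kronecker products and direct sums with zero preserve $\cpsd$), and $\nt_k(X'_kY'_k) = \frac{d_kN(k)}{k}\,\nt_{d_k}(X_kY_k)$ with $\frac{d_kN(k)}{k}\to 1$, giving the claim.
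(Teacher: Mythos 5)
Your plan — re-index to control the size $d_k$ relative to $k$, then pad — has the right shape, but there is a gap in the reduction step. You claim that replacing $(X_k,Y_k)$ on $\cpsd^{d_k}$ by $(I_N\otimes X_k,\,I_N\otimes Y_k)$ on $\cpsd^{Nd_k}$ lets you ``assume $d_k/k\to 0$,'' but this operation keeps the index $k$ fixed while inflating the size from $d_k$ to $Nd_k$, so it makes the ratio $d_k/k$ \emph{larger}, not smaller. What you actually need is a re-indexing of the sequence: repeat each pair at many consecutive indices, so that a pair of ambient size $d_j$ is only used to build $X'_k,Y'_k$ at target indices $k$ that are large relative to $d_j$. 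As written, the conclusion that $d_k\le\sqrt k$ holds ``after such a replacement'' does not follow, and with it the $\frac{d_kN(k)}{k}\to 1$ step is unjustified.

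The paper also uses a simpler device that renders most of your bookkeeping unnecessary. After a mild re-indexing — inserting repeated copies of $X_{k-1}$ (and zero matrices at the start) to push each $X_k$ to a position $\ge d_k$, which secures $d_k\le k$ — it sets $X'_k:=\sqrt{k/d_k}\,(X_k\oplus 0_{k-d_k})$ and likewise for $Y'_k$. Then $\nt_k(X'_kY'_k)=\frac1k\cdot\frac{k}{d_k}\Tr(X_kY_k)=\nt_{d_k}(X_kY_k)$ holds \emph{exactly}: the scalar factor $\sqrt{k/d_k}$ compensates precisely for the change of normalization that zero-padding introduces. This bypasses the tensoring, the choice of $N(k)$, the requirement $d_k/k\to 0$, and the error estimates entirely; the only reduction needed is $d_k\le k$, which is weaker than what your construction requires and is correctly obtained by insertion rather than by tensoring.
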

\begin{proof}
By possibly reordering the indices we can assume that the sequence $(d_k)_{k\in\N}$ is monotonically nondecreasing. 
First, we modify the sequence $(X_k)_k$ in such a way that $d_k\leq k$ holds for all $k\in\N$. For this, if there is some $k\in\N$ with $d_k>k$ 
we repeat the preceding element $X_{k-1}$ exactly $d_k-k$ times before the element $X_k$. For instance, if $X_1\in\R_+$ and $X_2\in\cpsd^3$ (i.e., $d_1=1$ and $d_2=3$), we replace the sequence $(X_1,X_2,X_3,\dots)$ by $(X_1,X_1,X_2,X_3,\dots)$. Then the position of $X_k$ is shifted by $d_k-k$ to $k+d_k-k=d_k$. If $k=1$ we simply add $d_1-1$  zero matrices  before $X_1$.
We do the same with the sequence $(Y_{k})_k$. Then the new sequence of inner products is obtained from the original sequence $(\nt_{d_k}(X_kY_k))_{k\in\N}$ by replacing each $\nt_{d_k}(X_kY_k)$ by $d_k-k+1$ copies of it if $d_k>k$, and thus still converges to the limit $a$.

Thus we can now assume that $d_k\leq k$ for all $k\in\N$. We set 
$X_k':=\sqrt{\frac{k}{d_k}}(X_k\oplus 0_{k-d_k})\in\cpsd^k$ and 
$Y_k':=\sqrt{\frac{k}{d_k}}(Y_k\oplus 0_{k-d_k})\in\cpsd^k$ for every $k\in\N$.
Then we have
\begin{align*}
\nt_{k}(X_k' Y_k')=\frac1k \Tr(X'_k Y'_k)=
\frac1k\frac{k}{d_k}\Tr(X_k Y_k)=\nt_{d_k}(X_k Y_k)
\end{align*} 
for every $k\in\N$. Hence the final sequence $(\nt_{k}(X'_k Y'_k))_{k\in\N}$ still converges to $a$. 
\end{proof}
We proceed by showing that the closure of $\cgpsd$ is a subset of $\csu$. The main ingredient will be Remark \ref{rem:ulimit}
together with the result of Lemma \ref{lem:order}.
\begin{lemma}\label{lem:contain}
For any free ultrafilter $\cU$ on $\N$, we have
$\cl(\cgpsd)\subseteq \csu$.
\end{lemma}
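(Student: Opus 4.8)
The goal is to show $\cl(\cgpsd) \subseteq \csu$ for a fixed free ultrafilter $\cU$ on $\N$. Since $\cgpsd \subseteq \csu$ trivially (a single $d\times d$ matrix $X$ gives rise to a constant sequence in $\cM_\cU$, up to rescaling by $d$ to account for the normalized trace), it suffices to show that $\csu$ is closed, or more directly, that the limit of any convergent sequence from $\cgpsd$ already lies in $\csu$. The plan is to take a sequence $A^{(k)} \in \cgpsd^n$ with $A^{(k)} \to A$, and produce positive semidefinite operators $X_1,\dots,X_n \in \cM_\cU$ with $A = (\tau_\cU(X_iX_j))_{i,j}$.

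\textbf{Step 1: Normalize the Gram representations.} For each $k$, write $A^{(k)} = (\Tr(X_i^{(k)} X_j^{(k)}))_{i,j}$ for some $X_i^{(k)} \in \cpsd^{d_k}$. Rescaling to pass to the normalized trace: $A^{(k)}_{ij} = d_k \cdot \nt_{d_k}(X_i^{(k)} X_j^{(k)})$ — but it is cleaner to absorb the scaling into the matrices themselves and work with $\nt$ directly, then rescale the $X_i$'s at the very end. By Lemma \ref{lem:order} (applied entrywise, or rather to each of the $n^2$ sequences $(\nt_{d_k}(X_i^{(k)}X_j^{(k)}))_k$ simultaneously, using the \emph{same} reindexing and block-embedding for all $i,j$), we may assume $d_k = k$ for all $k$, i.e. $X_i^{(k)} \in \cpsd^k = \cM_k$ for each $i \in [n]$ and each $k$.

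\textbf{Step 2: Uniform operator-norm bound.} To assemble $(X_i^{(k)})_k$ into an element of $\ell^\infty(\N, (\cM_k)_k)$ we need $\sup_k \|X_i^{(k)}\| < \infty$. This is the one point that needs care: a priori the Gram representations only control $\nt(X_i^{(k)} X_i^{(k)}) = \|X_i^{(k)}\|_2^2$, which is the $L_2$-norm, not the operator norm. The fix is a truncation argument: since $X_i^{(k)} \succeq 0$, replace it by $\widehat X_i^{(k)} := $ the spectral truncation of $X_i^{(k)}$ at level $C$ (i.e. cap all eigenvalues at $C$), for a suitable constant $C$ independent of $k$. One checks that $\|X_i^{(k)} - \widehat X_i^{(k)}\|_2^2 \to 0$ as $C \to \infty$ uniformly in $k$, because the $L_2$-norms $\nt((X_i^{(k)})^2) = A^{(k)}_{ii}$ are bounded (they converge to $A_{ii}$), so the contribution of eigenvalues exceeding $C$ is at most $A^{(k)}_{ii}/C$ by a Chebyshev-type estimate. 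Choosing $C$ large enough and then replacing $X_i^{(k)}$ by $\widehat X_i^{(k)}$ changes each $\nt(X_i^{(k)}X_j^{(k)})$ by an arbitrarily small amount (via Cauchy–Schwarz: $|\nt(X_iX_j) - \nt(\widehat X_i \widehat X_j)| \le \|X_i - \widehat X_i\|_2 \|X_j\|_2 + \|\widehat X_i\|_2 \|X_j - \widehat X_j\|_2$), so a diagonal argument yields Gram representations with $\|X_i^{(k)}\| \le C$ uniformly; and truncation preserves positive semidefiniteness. \emph{This is the main obstacle}: getting from $L_2$-control to operator-norm control while keeping positive semidefiniteness and not destroying the limit.

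\textbf{Step 3: Pass to the ultraproduct.} Now $(X_i^{(k)})_k \in \ell^\infty(\N, (\cM_k)_k)$ for each $i$, and since each $X_i^{(k)} \succeq 0$, its image $X_i := (X_i^{(k)})_k + \mathcal I_\cU \in \cM_\cU$ is positive semidefinite (positivity passes to quotients of $C^*$-algebras). By Remark \ref{rem:ulimit}, since the real sequence $(\nt_k(X_i^{(k)} X_j^{(k)}))_k$ converges to $A_{ij}$ (up to the bookkeeping with the scaling from Step 1), its ultralimit equals $A_{ij}$, i.e.
\[
\tau_\cU(X_i X_j) = \lim_\cU \nt_k(X_i^{(k)} X_j^{(k)}) = \lim_{k\to\infty} \nt_k(X_i^{(k)} X_j^{(k)}) = A_{ij}.
\]
Finally, rescaling the $X_i$ by the appropriate constant absorbed in Step 1 (or equivalently noting that $\csu$ was defined to already allow for this rescaling, as the remark after its definition observes), we conclude $A = (\tau_\cU(X_iX_j))_{i,j} \in \csu$, which completes the proof.
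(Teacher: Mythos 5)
Your overall strategy matches the paper's: take $A^{(k)} \to A$, normalize dimensions via Lemma~\ref{lem:order}, assemble the sequences $(X_i^{(k)})_k$ into elements of $\cM_\cU$, and use Remark~\ref{rem:ulimit} to identify the ultralimit with the classical limit. You also correctly flag the delicate point: to form an element of $\ell^\infty(\N,(\cM_k)_k)$ one needs $\sup_k\|X_i^{(k)}\| < \infty$ in \emph{operator} norm, whereas the convergence of $A^{(k)}$ only controls the $L_2$-norms $\|X_i^{(k)}\|_2^2 = \nt((X_i^{(k)})^2) = A^{(k)}_{ii}$. (The paper's own proof glosses over this, asserting ``as the matrices $A^{(k)}$ are bounded the matrices $X_i^{(k)}$ are bounded as well,'' which is not automatic: already for $n=1$, the matrix $X^{(k)} = \mathrm{diag}(\sqrt{k},0,\dots,0) \in \cpsd^k$ satisfies $\nt((X^{(k)})^2)=1$ yet $\|X^{(k)}\| = \sqrt{k}$.)

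However, your Step~2 (the truncation argument) does not close this gap. The ``Chebyshev-type estimate'' $\|X_i^{(k)} - \widehat X_i^{(k)}\|_2^2 \le A^{(k)}_{ii}/C$ is false: the truncation error $\sum_{\lambda>C}(\lambda-C)^2\,\nt(P_\lambda)$ is \emph{not} dominated by $\nt(X^2)/C$, and it need not go to zero as $C\to\infty$ uniformly in $k$. In the same example, truncating $X^{(k)} = \mathrm{diag}(\sqrt k,0,\dots,0)$ at any fixed level $C$ yields $\widehat X^{(k)} = \mathrm{diag}(C,0,\dots,0)$ for $k > C^2$, so $\|X^{(k)} - \widehat X^{(k)}\|_2^2 = (1 - C/\sqrt k)^2 \to 1$ while $\nt((\widehat X^{(k)})^2) = C^2/k \to 0 \neq A_{11}$. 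Thus the truncated sequences do define elements of $\cM_\cU$, but their Gram matrix is the zero matrix, not $A$. The real obstruction is that the \emph{given} Gram representation may concentrate all its $L_2$-mass on a sparse high-eigenvalue subspace; capping eigenvalues then loses that mass entirely. One would need to \emph{replace} the given Gram representation by a different one with a priori bounded operator norm (and it is not obvious that this is always possible without assuming a dimension bound), rather than truncate it. As written, Step~2 does not work, so your proof inherits the same gap that the published proof leaves unaddressed.
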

\begin{proof}
Let $A\in\cl(\cgpsd)$ be given. Then there is a sequence of matrices $A^{(k)}\in \cgpsd$ converging to $A$:
$\lim_{k\to \infty}A^{(k)}_{ij}=A_{ij}$ for all $i,j\in [n]$. 
For each $k$, there exist positive semidefinite matrices $X^{(k)}_1,\ldots,X^{(k)}_n$ such that 
$A^{(k)}=(\nt(X^{(k)}_iX^{(k)}_j))$. By Lemma \ref{lem:order} we can assume that $X^{(k)}_1,\ldots,X^{(k)}_n\in \mathcal S^{k}_+$. 
As the matrices $A^{(k)}$ are bounded the matrices $X^{(k)}_i$ are bounded as well.
Hence the sequence $(X^{(k)}_i)_k$ belongs to $\ell^\infty(\N,(\cM_{k}))$ and we can consider its image $X_i$ in the tracial ultrapower $\cM_\cU$.
By the theorem of \L{}os the operators $X_i$ are positive semidefinite since all $X^{(k)}_i$ are positive semidefinite.
It suffices now to show that $A=(\tau_\cU(X_iX_j))$ since then we can conclude that $A\in \csu$.
For this observe that, by the definition of $\tau_\cU$, we have:
$\tau_\cU(X_iX_j)=\lim_\cU \nt(X^{(k)}_iX^{(k)}_j) =\lim_\cU A^{(k)}_{ij}$.
On the other hand, as the sequence $(A^{(k)}_{ij})_k$ converges to $A_{ij}$, in view of Remark \ref{rem:ulimit}, we have that 
$\lim_{\cU}A^{(k)}_{ij}=A_{ij}.$ This concludes the proof.
\end{proof}

Since Connes' embedding conjecture holds true for the tracial ultraproduct $M_\cU$, i.e., $\cM_\cU$ embeds into the ultrapower $\cR^\cU$ of the hyperfinite II$_1$ factor $\cR$, we get by Proposition \ref{prop:micro} that $M_\cU$ has matricial microstates. This will be the key ingredient to show the equality between $\csu$ and $\cl(\cgpsd)$.

\begin{theorem} \label{prop:equality}
For any free ultrafilter $\cU$ on $\N$
$\cl(\cgpsd)=\csu$ holds.
\end{theorem}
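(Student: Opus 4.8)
The plan is to establish the two inclusions separately. The inclusion $\cl(\cgpsd)\subseteq\csu$ is already provided by Lemma~\ref{lem:contain}, so the only thing left to prove is the reverse inclusion $\csu\subseteq\cl(\cgpsd)$. Since $\cl(\cgpsd)$ is closed it actually suffices to show $\csu\subseteq\cl(\cgpsd)$, and since $\csu$ may not obviously be closed we will in fact prove the stronger statement that every $A\in\csu$ is a limit of matrices in $\cgpsd$, which automatically gives $A\in\cl(\cgpsd)$.

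First I would take an arbitrary $A\in\csu$, so that $A=(\tau_\cU(X_iX_j))_{i,j=1}^n$ for some positive semidefinite $X_1,\dots,X_n\in\cM_\cU$. The key tool is Proposition~\ref{prop:micro}: since $\cM_\cU$ embeds in a trace-preserving way into $\cR^\cU$, it has matricial microstates. Applying this with $t=2$ (we only need moments of length up to two, namely the products $X_iX_j$), for every $\ep>0$ there exist $d\in\N$ and self-adjoint $B_1,\dots,B_n\in\mathcal S^d$ with $|\tau_\cU(X_iX_j)-\nt_d(B_iB_j)|<\ep$ for all $i,j\in[n]$. The subtlety is that the $B_i$ produced are merely self-adjoint, not necessarily positive semidefinite, whereas membership in $\cgpsd$ requires a Gram representation by \emph{positive semidefinite} matrices. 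So the next step is to fix this positivity defect.

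To repair positivity I would use the standard trick of applying the microstates to the positive square roots. Write $Y_i:=X_i^{1/2}\in\cM_\cU$, which are again positive semidefinite (hence self-adjoint) elements of $\cM_\cU$, and note $\tau_\cU(X_iX_j)=\tau_\cU(Y_i^2Y_j^2)=\tau_\cU((Y_iY_j)(Y_jY_i))$; more usefully, the matrix $A$ has entries $\tau_\cU(Y_i Y_i Y_j Y_j)$. Apply Proposition~\ref{prop:micro} to the tuple $Y_1,\dots,Y_n$ with $t=4$: for every $\ep>0$ there are $d\in\N$ and self-adjoint $C_1,\dots,C_n\in\mathcal S^d$ approximating all tracial moments of the $Y_i$ of length $\le 4$ up to $\ep$. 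Now set $B_i:=C_i^2\succeq 0$; these are positive semidefinite, and the Gram matrix $(\nt_d(B_iB_j))=(\nt_d(C_i^2C_j^2))$ approximates $(\tau_\cU(Y_i^2Y_j^2))=A$ entrywise, because $\nt_d(C_i^2C_j^2)$ is (a sum of) the approximated degree-$4$ moments of the $C_i$'s and $\tau_\cU(Y_i^2Y_j^2)$ is the corresponding moment of the $Y_i$'s. After rescaling so that $\sum_i \Tr(B_i)$ matches the normalization (or simply absorbing the $1/d$ factor by replacing $B_i$ with $B_i/\sqrt d$, turning $\nt_d$ into $\Tr$), each such Gram matrix lies in $\cgpsd^n$. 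Letting $\ep\to 0$ yields a sequence in $\cgpsd^n$ converging entrywise to $A$, hence $A\in\cl(\cgpsd^n)$.

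I expect the main obstacle to be exactly the positivity issue in the previous paragraph: Proposition~\ref{prop:micro} only guarantees self-adjoint microstates, so one cannot directly read off a completely positive semidefinite Gram representation, and care is needed to phrase the moment approximation so that passing to squares $C_i\mapsto C_i^2$ (i) preserves positivity of the resulting Gram representatives and (ii) still tracks the correct entries of $A$. The bookkeeping of which moments of length $\le 4$ are needed, together with the normalization/rescaling between $\nt_d$ and $\Tr$, is the only genuinely delicate point; everything else (boundedness giving convergence, $\cl(\cgpsd)$ being closed, Lemma~\ref{lem:contain} for the easy inclusion) is routine.
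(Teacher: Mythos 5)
Your proof is correct and follows essentially the same route as the paper: after using Lemma~\ref{lem:contain} for the easy inclusion, you take positive square roots $Y_i=X_i^{1/2}$, apply Proposition~\ref{prop:micro} to the $Y_i$'s (degree $\le 4$ moments), square the resulting self-adjoint microstates to restore positivity, and pass to the limit. The only cosmetic difference is that you spell out the normalization between $\nt$ and $\Tr$ and the failed naive attempt, both of which the paper handles implicitly or in a preceding remark.
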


\begin{proof}
In view of Lemma \ref{lem:contain} we only have to show the inclusion $\csu \subseteq \cl (\cgpsd)$.
Let $A\in \csu$. By assumption, $A=(\tau_\cU(X_iX_j))$ for some positive semidefinite operators $X_1,\ldots,X_n\in \cM_\cU$.
As the operators $X_i$ are positive semidefinite, there exist operators $Y_i$ such that $X_i=Y_i^2$ for $i\in [n]$. 
Since $M_\cU$ embeds into $\cR^\cU$  we can apply Proposition \ref{prop:micro} and conclude that $M_\cU$ has matricial microstates for the operators $Y_1,\ldots,Y_n$. In particular, for every $k\in\N$, there exist $d_k\in\N$ and symmetric  matrices $B_1^{(k)},\ldots, B^{(k)}_n\in \mathcal S^{d_k}$ 
such that $|\tau_\cU(Y_i^2Y_j^2)-\nt((B_i^{(k)})^2(B_j^{(k)})^2)|<1/k$. Hence the operators $X_i^{(k)}=(B_i^{(k)})^2$ belong to
$\mathcal S^{d_k}_+$ and satisfy 
\begin{align}\label{eq:approx}
|\tau_\cU(X_iX_j)-\nt({X_i^{(k)}}{X_j^{(k)}})|<\frac1k \ \text{ for all } i,j\in [n].
\end{align} 
For each $k$, the matrix $A^{(k)}:=(\nt(X^{(k)}_iX^{(k)}_j))$ belongs to the cone $\cgpsd$.
Moreover it follows from (\ref{eq:approx}) that the sequence $(A^{(k)})_k$ converges to the matrix $A$.
This shows that $A$ belongs to the closure of $\cgpsd$, which concludes the proof.
\end{proof}

We would like to conclude with a possible other description of the closure of $\cgpsd$ in
the case that Connes' embedding conjecture turns out to be true. 

As mentioned at the beginning of the section, the closure of $\cgpsd$  contains the cone $\mathcal{CS}_{\infty+}$, i.e., it contains symmetric matrices which have a Gram representation by some class of positive semidefinite infinite dimensional matrices. Also the given description of $\cl(\cgpsd)$ as $\csu$ involves Gram representations by operators on an infinite dimensional Hilbert space. In regard to the relativistic model of quantum correlations where one allows \emph{all} (possibly infinite dimensional) Hilbert spaces one might ask for the most general infinite dimensional version of $\cgpsd$. Since 
one is restricted to operators for which one can define an inner product (or a trace), a decent candidate for the infinite dimensional analog of $\cgpsd$ is 
\beqn\csvn:=\{A\in\cpsd: A=(\tau_{\mathcal N}(X_iX_j)) \text{ {\small for a finite vN algebra }} \mathcal N \text{ {\small and psd }} X_1,\ldots,X_n\in \mathcal N\},\eeqn
where we allow \emph{any} finite von Neumann algebra $\mathcal N$ (with trace $\tau_{\mathcal N})$. Obviously we have the chain of inclusions $\cgpsd\subseteq\csu\subseteq\csvn$.

Moreover, using the general theory of tracial ultraproducts of von Neumann algebras (instead of just matrix algebras),
 one can show with a similar line of reasoning as in Lemma \ref{lem:contain} that $\csvn$ is closed. Indeed, take a 
sequence of matrices $A^{(k)}\in \csvn^n$ converging to some $A\in\mathcal S^n$. Then
$\lim_{k\to \infty}A^{(k)}_{ij}=A_{ij}$ for all $i,j\in [n]$ and 
for each $k$, there exist a finite von Neumann algebra $\mathcal N_k$ with trace $\tau_k$
and bounded positive operators $X^{(k)}_1,\ldots,X^{(k)}_n\in \mathcal N_k$ such that 
$A^{(k)}=(\tau_k(X^{(k)}_iX^{(k)}_j))$. Fixing a free ultrafilter $\cU$ one can conclude that the images $X_i$ of the sequences $(X^{(k)}_i)_k$ in the tracial ultraproduct $\mathcal N_\cU=\ell^\infty(\N,(\mathcal N_{k})_k)/\mathcal I_\cU$ of the corresponding finite von Neumann algebras provide a Gram representation for $A$ in the von Neumann algebra $\mathcal N_\cU$. Hence the following statement holds.

\begin{theorem} \label{prop:vncone}
$\csvn$ is a closed cone.
\end{theorem}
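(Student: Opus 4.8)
The plan is to mimic the proof of Lemma~\ref{lem:contain}, replacing matrix algebras by arbitrary finite von Neumann algebras and invoking the general (rather than matricial) construction of tracial ultraproducts. Concretely, I would start from a sequence $(A^{(k)})_k$ in $\csvn^n$ converging entrywise to some $A\in\mathcal S^n$, so that for each $k$ there is a finite von Neumann algebra $(\mathcal N_k,\tau_k)$ and positive semidefinite operators $X^{(k)}_1,\dots,X^{(k)}_n\in\mathcal N_k$ with $A^{(k)}_{ij}=\tau_k(X^{(k)}_iX^{(k)}_j)$. Fix a free ultrafilter $\cU$ on $\N$ and form the tracial ultraproduct $\mathcal N_\cU=\ell^\infty(\N,(\mathcal N_k)_k)/\mathcal I_\cU$, which is again a finite von Neumann algebra with trace $\tau_\cU\bigl((T_k)_k+\mathcal I_\cU\bigr)=\lim_\cU\tau_k(T_k)$; this is the standard construction for which I would cite \cite{BObook}.

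The first technical point is boundedness of the representing operators. Since $(A^{(k)})_k$ converges it is bounded, and in particular $\tau_k\bigl((X^{(k)}_i)^2\bigr)=A^{(k)}_{ii}$ is bounded uniformly in $k$; but this only bounds the $L_2$-norms, not the operator norms, so the sequences $(X^{(k)}_i)_k$ need not lie in $\ell^\infty(\N,(\mathcal N_k)_k)$. I would handle this exactly as one does in the von Neumann setting: truncate via spectral projections, i.e. replace $X^{(k)}_i$ by $X^{(k)}_i\wedge cI$ (the bounded Borel functional calculus applied to $x\mapsto\min(x,c)$) for a suitable constant $c$, controlling the $L_2$-error of the truncation uniformly using the uniform $L_2$-bound and Chebyshev's inequality, and then letting $c$ grow along a diagonal argument in $k$. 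After this reduction the truncated sequences are uniformly bounded in operator norm, hence define elements $X_i\in\mathcal N_\cU$, which remain positive semidefinite by \L os's theorem, and $\tau_\cU(X_iX_j)=\lim_\cU\tau_k(X^{(k)}_iX^{(k)}_j)=\lim_\cU A^{(k)}_{ij}=A_{ij}$ by Remark~\ref{rem:ulimit}. This exhibits $A\in\csvn$, so $\csvn$ is closed, and it is clearly a convex cone (closed under addition by taking direct sums $\mathcal N\oplus\mathcal N'$ and under nonnegative scaling of the $X_i$), completing the proof.

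The main obstacle is precisely the operator-norm boundedness issue: in Lemma~\ref{lem:contain} one has a fixed sequence $(\cM_k)_k$ of matrix algebras and convergence of $A^{(k)}$ gives boundedness of the matrices $X^{(k)}_i$ automatically because on a matrix algebra the operator norm is controlled by the Hilbert--Schmidt norm (with a dimension-dependent constant that is harmless there), whereas here the $\mathcal N_k$ are arbitrary, possibly with no uniform dimension, so a genuine truncation argument is needed to land inside $\ell^\infty(\N,(\mathcal N_k)_k)$. Since the paper only sketches this (``with a similar line of reasoning as in Lemma~\ref{lem:contain}''), I would present the truncation step carefully but keep the rest brief, as the ultraproduct construction, \L os's theorem, and the trace computation are entirely parallel to the matrix-algebra case already treated.
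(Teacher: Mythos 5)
Your plan is the same as the paper's sketch: form the tracial ultraproduct $\cN_\cU=\ell^\infty(\N,(\cN_k)_k)/\mathcal I_\cU$, push the representing sequences $(X^{(k)}_i)_k$ into it, check positivity by \L{}os's theorem, and read off $A_{ij}=\tau_\cU(X_iX_j)=\lim_\cU\tau_k(X^{(k)}_iX^{(k)}_j)$ via Remark~\ref{rem:ulimit}. You are also right to flag operator-norm boundedness of the sequences as the crux, since without $\sup_k\|X^{(k)}_i\|<\infty$ the sequence simply has no image in $\cN_\cU$. Note, however, that this difficulty is not special to the von Neumann setting: in Lemma~\ref{lem:contain} the $X^{(k)}_i$ live in $\R^{k\times k}$ with normalized trace $\nt_k$, and the relevant inequality $\|X\|\le\sqrt k\,\|X\|_2$ has constant $\sqrt k\to\infty$, so the assertion there that boundedness of $A^{(k)}$ forces the $X^{(k)}_i$ to be bounded in operator norm is equally delicate; your remark that the dimension-dependent constant is ``harmless'' in the matrix case is not correct.

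The truncation step does not close this gap. Chebyshev controls the spectral mass $\tau_k\bigl(\chi_{(c,\infty)}(X^{(k)}_i)\bigr)\le\tau_k\bigl((X^{(k)}_i)^2\bigr)/c^2$ above $c$, but it does not control the $L_2$-mass $\tau_k\bigl((X^{(k)}_i-cI)_+^2\bigr)$ that truncation discards, because that mass may be carried by spectrum far above $c$. Concretely, with $n=1$, $\cN_k=\R^{k\times k}$, $\tau_k=\nt_k$, and $X^{(k)}=\sqrt k\,E_{11}$, one has $\tau_k\bigl((X^{(k)})^2\bigr)=1$ for all $k$, so $A^{(k)}_{11}=1\to1=A_{11}$; yet for every fixed $c$ and all $k>c^2$, $X^{(k)}\wedge cI=cE_{11}$ and $\tau_k\bigl((X^{(k)}\wedge cI)^2\bigr)=c^2/k\to0$, so the image of the truncated sequence in $\cN_\cU$ is $0$ for every $c$ and never reproduces $A_{11}=1$, while a diagonal choice $c_k\to\infty$ leaves $\ell^\infty(\N,(\cN_k)_k)$ altogether. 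In this example the remedy is to pass to a \emph{different} Gram representation of the same $A^{(k)}$ (here $X^{(k)}=I_k$ works), and in general the missing step is to argue that Gram representations of the $A^{(k)}$ can be \emph{chosen} with uniformly bounded operator norm, not that an arbitrary one can be truncated. The remainder of your outline (ultraproduct construction, positivity, the trace computation, closure under direct sums and nonnegative scaling) is correct and matches the paper.
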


In this context, we would like to mention a result in \cite{FW:2014} showing that 
 $\csvn^n\subsetneq\cpsd^n\cap\R_+^{n\times n}$ for any $n \ge 5$. Summarizing we have the inclusions:
 \beqn\cl(\cgpsd^n)= \csu^{n}\subseteq \csvn^n\subseteq \cpsd^n\cap \R^{n\times n}_+.\eeqn
Finally, if Connes' embedding conjecture is true then the argument of Proposition 
\ref{prop:equality} could be used for any finite von Neumann algebra and thus this 
would imply that $\cl(\cgpsd)=\csvn$. 

\subparagraph*{Acknowledgments}

S.~B. and T.~P. were funded by the SIQS European project.

\small

\end{document}